\title{Fibered knots and links in lens spaces}
\author{Enrico Manfredi and Alessio Savini}
\date{31 January 2014}
\begin{document}

\maketitle

\begin{abstract}
We take advantage of the correspondence between fibered links, open book decompositions and contact structures on a closed connected 3-dimensional manifold to determine a mixed link diagram presentation for a particular fibered link $L$ in the lens space $L(p,q)$.
Moreover, we construct a diagram for the lift of $L$ in the 3-sphere $S^3$ in order to show that, for $q=1$, $L$ is compatible with the standard contact structure of $L(p,1)$ when $p \leq -1$.
\newline
\newline
\textit{Key words and phrases}: fibered knot/link, open book decomposition, contact structure, lens space

\let\thefootnote\relax\footnotetext{\textit{Mathematics Subject Classification 2010}: Primary 57M27, Secondary 57R17.}

\end{abstract}

\vspace{20pt}

%*******************************************************
%Materiale Iniziale
%*******************************************************

\section{Introduction}

%breve storia
In~\cite{harer:articolo} Harer defines two different operations which one can apply to a fixed fibered knot/link in a 3-dimensional manifold $M^3$ in order to get all the other possible examples of fibered knot/link. A knot $K$ is a fibered knot if there exists a map $\pi: M^3 \setminus K \rightarrow S^1$ which is a fiber bundle and a homeomorphism $\varphi: K \times D^2 \rightarrow \nu(K)$, where $\nu(K)$ is a suitable tubular neighborhood $K$ such that $\varphi \circ \pi|_{K \times \partial D^2}$ coincides with the projection on the second factor. For instance, the trivial knot in the 3-sphere $S^3$ satisfies these properties.\\ 
The interest in the study of fibered knots can be justified by several reasons. In~\cite{skora:articolo} Skora assumes to fix a fibered knot as braid axis in order to extend the Alexander trick and to generalize Markov's theorem to a generic 3-manifold.
Another remarkable aspect is the possibility to translate topological results in terms of differential properties. More precisely, in~\cite{giroux:articolo} Giroux establishes a one to one correspondence between the set of open book decompositions of a 3-manifold, a notion completely equivalent to the one of fibered link, and the set of positive contact structures on the same manifold.\\
%obiettivo del paper
The main goal of this paper is to construct a representation via mixed link diagram of a particular fibered link $L$ in the lens space $L(p,q)$. Starting from $L$ it is possible to construct all the possible fibered knots/links in $L(p,q)$ by applying a suitable sequence of Harer moves. In literature the link $L$ is already known for specific values of $(p,q)$, but it is usually represented as its corresponding abstract open book decomposition (see for example~\cite{pavelescu:articolo}).\\
% organizzazione capitoli
The structure of the paper is as follows. In Section 2 we %introduce the basic terminology 
 fix the notation regarding knot theory and 3-dimensional topology. After recalling the definition of link in a 3-manifold, we give a brief description of surgery theory, which allows to define Kirby diagrams for a framed link and Kirby moves. Next, our attention is focused on the lens space $L(p,q)$. After some equivalent definitions of it, we introduce two ways to represent links embedded in this particular space, the mixed link diagram and the band diagram.
\\
In Section 3 are exposed the notions of fibered link, open book decomposition and contact structure, all of them relative to the same 3-manifold $M^3$. In this section we recall Harer moves and we give a precise description of the correspondence between the set of fibered links up to positive plumbing, the set of open book decompositions up to positive stabilization and the set of contact structures up to isotopy.
\\
We start Section 4 with Lemma \ref{obdeffects}, which describes the effects produced on a given open book by performing a surgery transversal or parallel to each page with framing 0 or 1, respectively. This lemma turns out to be essential for our purposes. Indeed, if we apply it, together with Kirby moves, to a $p$-framed surgery along the unknot which is transversal to each page of the standard open book decomposition $(D^2,\text{Id})$ of $S^3$, we get back the desired presentation of the fibered link $L$ in $L(p,1)$, as stated in Proposition \ref{casep1}. The same procedure can be generalized by taking the integral presentation $L(p,q)$, as in Proposition \ref{fiberedlinkprop}.
\\
In the last section we outline a procedure which allows us to obtain the lift in $S^3$ of the fibered link $L$ in $L(p,1)$, in order to verify that $L$ is compatible with the contact structure induced by quotienting the standard contact structure on the 3-sphere $S^3$ under the $\mathbb{Z}_p$-action when $p<-1$.\\

%%*********************************************************
%%Materiale Principale
%%*********************************************************

%\thispagestyle{fancy}
\section{Representation of links in lens spaces}

\subsection{Basic definitions}

Let \textit{Top} be the category of topological manifolds and continuous maps and let \textit{Diff} be the category of differentiable manifolds and smooth functions. Since in~\cite{kirby:libro} it is proved that these categories are equivalent in dimension three, we will provide a differentiable structure on a topological 3-manifold each time it will be necessary.\\
Given a closed orientable 3-dimensional manifold $M^3$, we define a \textit{link} $L$ in $M^3$ as a finite collection of smooth embeddings $\gamma_i : S^1 \rightarrow M^3$ whose images $L_i$ are pairwise disjoint. Each curve is said to be a \textit{component} of the link. A link with only one component is called a \textit{knot}. A link $L \subset M^3$ is \textit{trivial} if each component bounds a disk in $M^3$ in such a way that each disk is disjoint from the others.
\\
Two different links $L_1$ and $L_2$ in $M^3$ are said to be \textit{homeo-equivalent} if there \mbox{exists} a homeomorphism $F:M^3 \rightarrow M^3$ such that $F(L_1)=L_2$. In an analogous way, we will say that $L_1$ and $L_2$ are \textit{isotopy-equivalent} if there exists an isotopy $H: M^3 \times [0,1] \rightarrow M^3$ such that $h_0=\text{id}_{M^3}$ and $h_1(L_1)=L_2$, where $h_t$ is defined as $h_t(m):=H(m,t)$ with $m \in M^3$. Clearly these two equivalence relations are \mbox{different} so, to avoid any ambiguity, we choose to consider links up to homeo-equivalence until the end of the paper.
%Let $K_1$ and $K_2$ be two components of a link $L$ in the 3-sphere $S^3$. There are several ways to define the \textit{linking number} between $K_1$ and $K_2$. For example, if we consider the first homology group $H_1(S^3 \setminus K_2)$, this reveals to be isomorphic to $\mathbb{Z}$. Fixing one of its two generator, let us call it $[\mu_2]$, the homology class $[K_1]$ of the component $K_1$ in $H_1(S^3 \setminus K_2)$ has to be written as $[K_1]=n[\mu_2]$, for a certain $n \in \mathbb{Z}$. Thus we can define $\texttt{lk}(K_1,K_2):=n$.
\\
Thanks to the use of the previous definitions, it is now possible to describe a procedure which allows us to construct new 3-manifolds starting from a fixed knot or link in $M^3$. Let $K$ be a knot in a smooth closed orientable 3-dimensional manifold $M^3$. We observe that it is possible to thicken $K$ to obtain its closed tubular neighborhood $\nu(K)$. By cutting along the boundary $\partial \nu(K)$, we get two different 3-manifolds: the first one is the \textit{knot exterior}, i.e. the closure of $M^3 \setminus \nu(K)$, and the other one is the solid torus $\nu(K)$, which can be identified with the standard solid torus $S^1 \times D^2 $. Now we can choose an arbitrary homeomorphism $h: \partial \nu (K) \rightarrow \partial(\overline{M^3 \setminus \nu(K)})$ to sew the solid torus back. In this way, we will obtain a new \mbox{manifold} $\tilde M ^3:= \nu(K) \cup_h (\overline{M^3\setminus \nu(K)})$. We say that the manifold $\tilde M^3$ is obtained from $M^3$ by a \textit{a surgery operation \mbox{along $K$}}.
\\
The construction of the new manifold $\tilde M^3$ depends on the choice of the homeomorphism $h$. More precisely, $\tilde M^3$ is completely determined by the image of the meridian $\partial D^2 \times \{\ast\}$ of the solid torus $\nu(K)$, that is by the curve $\gamma=h(\partial D^2 \times \{\ast\})$. 
\\
Let us restrict our attention to the case $M^3=S^3$. If $K$ is a knot in $S^3$, the homology of the complement $V:=\overline{S^3 \setminus \nu(K)}$ is given by $H_0(V)=H_1(V) \cong \mathbb{Z}$ and $H_i(V) \cong 0$, for $i \geq 2$. A generator of the group $H_1(V)$ can be represented by an essential curve $\alpha$ lying on the torus $\partial V$. This curve is said to be a \textit{meridian} of $K$. In the same way, by picking up a curve $\beta$ lying on $\partial V$ which is nullhomologous in $V$ but not in $\partial V$, we get a curve called \textit{longitude} (sometimes is also called \textit{canonical longitude}). 

\begin{figure}[!h]
	\centering
		\includegraphics[width=9cm]{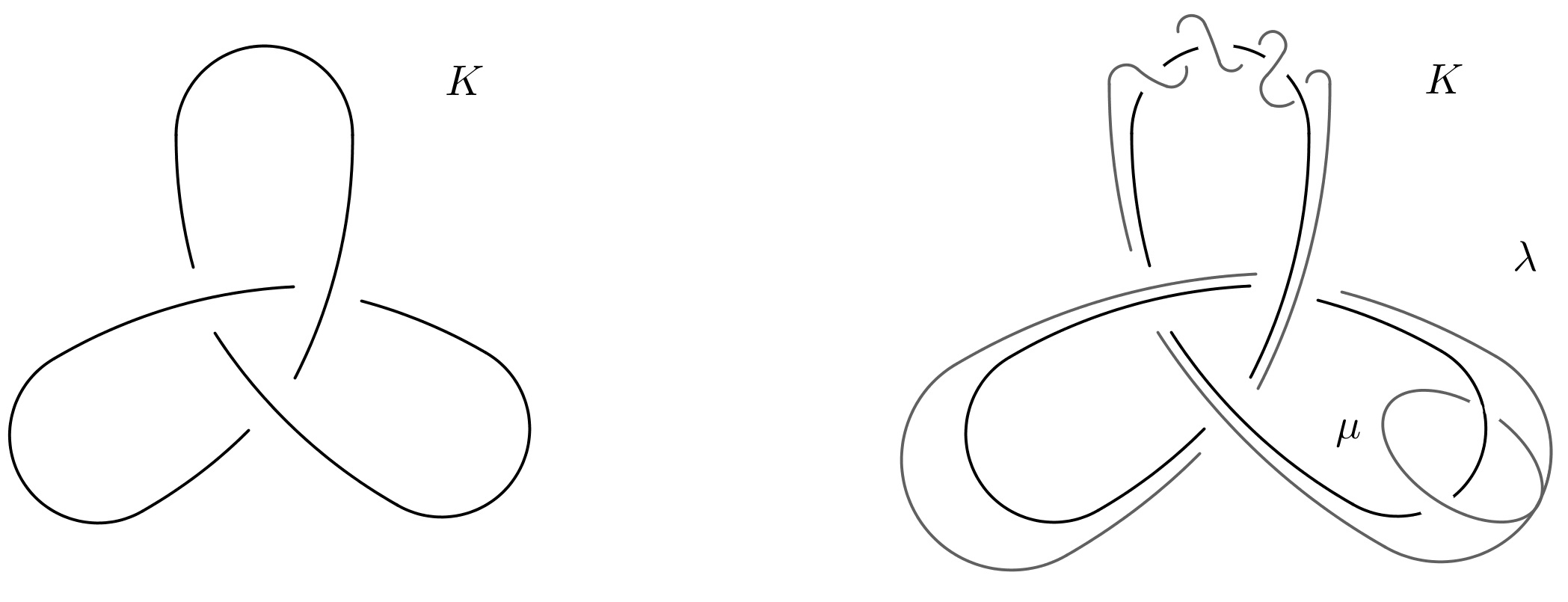}
	\caption{Meridian and parallel for the trefoil knot.}
\end{figure}

If we now consider an essential curve $\gamma$ on the boundary $\partial V$, this curve can always be written in the form $\gamma = p \alpha + q \beta$, with $(p,q)$ pair of coprime integers, after a suitable isotopy. Moreover, the pairs $(p,q)$ and $(-p,-q)$ refer to the same curve, since the orientation of $\gamma$ is not influent. Thinking of a pair $(p,q)$ as a reduced fraction $p/ q$ establishes a one-to-one correspondence between the isotopy classes of non trivial closed curves on $\partial V$ except for $\{\alpha\}$ and the set of rational numbers $\mathbb{Q}$. If we want to include $\alpha$, we must extend $\mathbb{Q}$ with another element indicated by $1/0=\infty$. We write $\bar{\mathbb{Q}} := \mathbb{Q} \cup \{ \infty \}$.
\\
A surgery described by a pair $(p,q)$ of coprime integers (o equivalently by an element of $\bar{\mathbb{Q}}$) is called \textit{rational}. The surgery is called \textit{integral} if $q = \pm 1$.
Similarly, one can define rational and integral surgeries along a link $L \subset S^3$: the surgery coefficient along each component should be rational, respectively, integral.
\\
A good way to represent surgeries along a link, both rational and integral, is to draw the link and write on each component the corresponding reduced fraction. We report two examples in Figure~\ref{framedlink}.

\begin{figure}[!h]
	\centering
		\includegraphics[width=6cm]{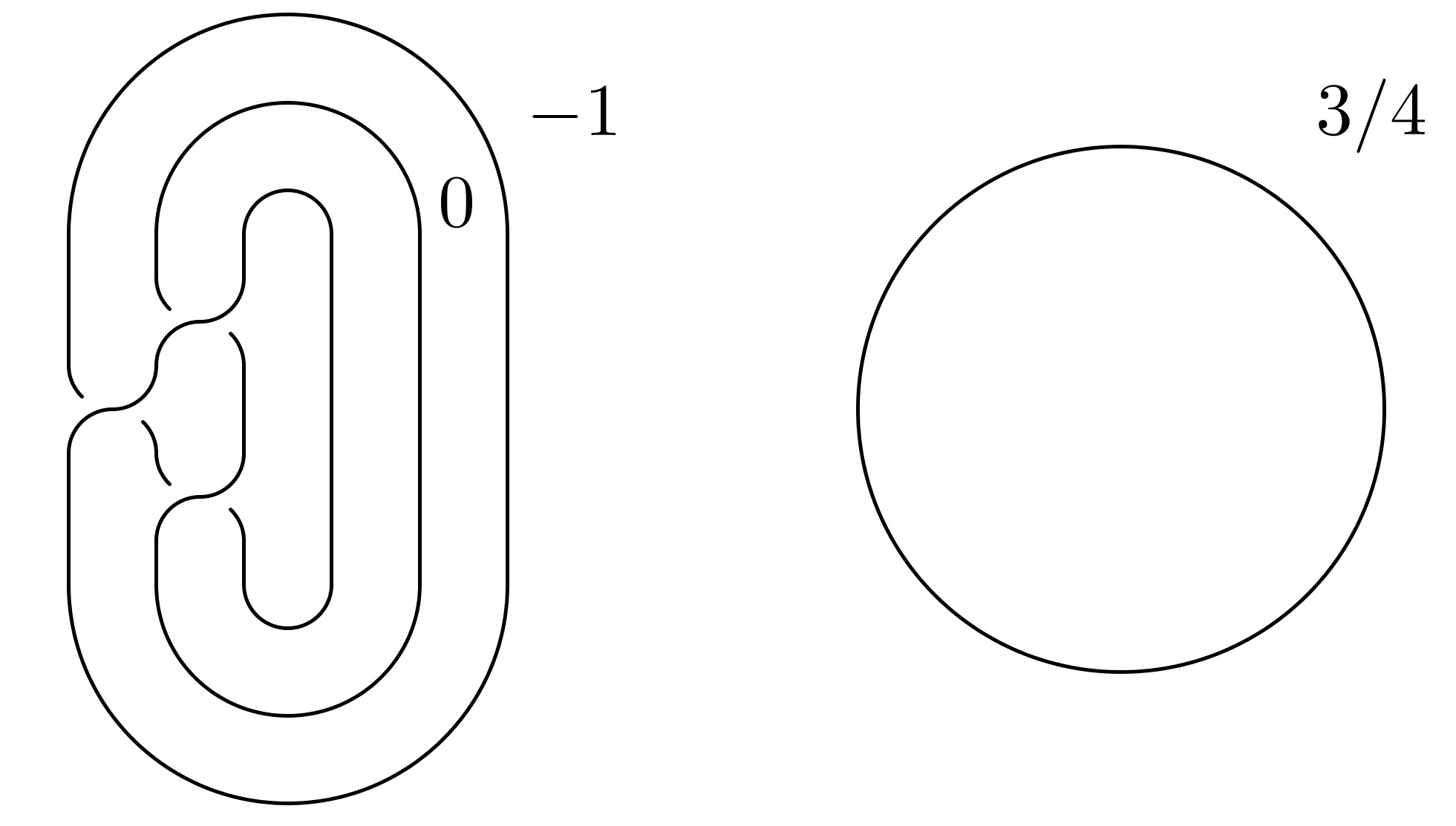}
	\caption{Examples of surgery diagrams.}\label{framedlink}
\end{figure}

The assumption made so far of considering the 3-sphere as ambient manifold for surgery operations reveals not to be restrictive since the following theorem holds (we refer to Theorem 3.1 of~\cite{saveliev:libro} for details).

\begin{teor}[{\upshape ~\cite{lickorish:articolo},~\cite{wallace:articolo}}] \label{lickorish}
Every closed orientable 3-manifold  can be obtained from $S^3$ by an integral surgery on a link $L \subset S^3$.
\end{teor}

Thanks to Theorem \ref{lickorish}, from now until the end of the section we will consider only integral surgeries on links in $S^3$. We refer to~\cite{saveliev:libro} for the part that follows. First of all, we notice that an integral surgery along a link $L$ is equivalent to the choice of an integer for each component. This choice is usually called a \textit{framing} for $L$ and $L$ is called \textit{framed link}.
\\
From Theorem \ref{lickorish} a question could raise quite naturally: given two different framed links, how can we understand if the resulting 3-manifolds are homeomorphic?
% definizione mosse di kirby
In order to answer to this question, we need to introduce two operations, called \textit{Kirby moves}, which do not change the 3-manifold represented by a framed link $L$.
\begin{itemize}
	\item \textbf{Move K1}:
	\\
	We add or delete an unknotted circle with framing $\pm 1$ which does not link any other component of $L$ (see Figure~\ref{movek1}).
	\begin{figure}[!h]
 		\centering
			\includegraphics[width=4.5cm]{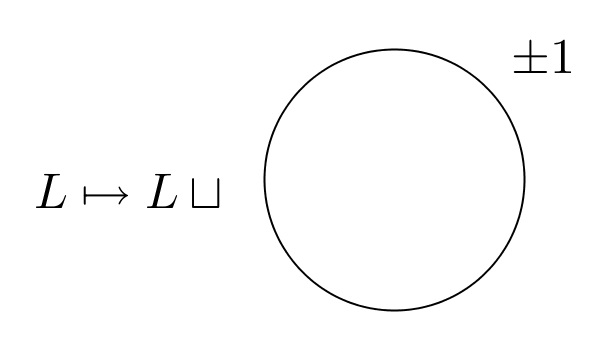}
		\caption{Move K1.}\label{movek1}
	\end{figure}
	
	\item \textbf{Move K2}
	\\
	Let us take $L_1$ and $L_2$ two components of $L$ with framing $n_1$ and $n_2$. Moreover, let $L'_2$ be the curve 	which 	describes the surgery along $L_2$. Now, we substitute the pair $L_1 \cup L_2$ with $L_1 \cup L^\#$, where $L^\#=L_1 \#_b L'_2$ is the \textit{sum along a band} $b$ connecting $L_1$ to $L'_2$ and disjoint from the other components, which remain unchanged after this substitution. The sum $L_1 \#_b  L_2'$ is obtained as follows. We start considering a band $b=[-1,1] \times [-\epsilon, \epsilon]$ with $\{-1\} \times [-\epsilon, \epsilon]$ lying on $L_1$ and $\{1\} \times [-\epsilon, \epsilon]$ lying on $L_2'$. Now, to get $L^\#$, we substitute $\{ \pm 1 \} \times [-\epsilon, \epsilon]$ with the sides of the band corresponding to $[-1,1] \times \{-\epsilon, \epsilon\}$ (see Figure \ref{movek2}).
All components but $L_1$ do not change their framing. The framing of the new component $L^\#$ is given by the formula
		\[
		n^\#:=n_1 + n_2 + 2 \texttt{lk}(L_1,L_2).
		\]
	The components $L_1$ and $L_2$ have to be oriented in such a way to define a coherent orientation on $L^\#$. The orientation depends also on the choice of the band used in the gluing process.
\begin{figure}[!h]
		\centering
			\includegraphics[width=10cm]{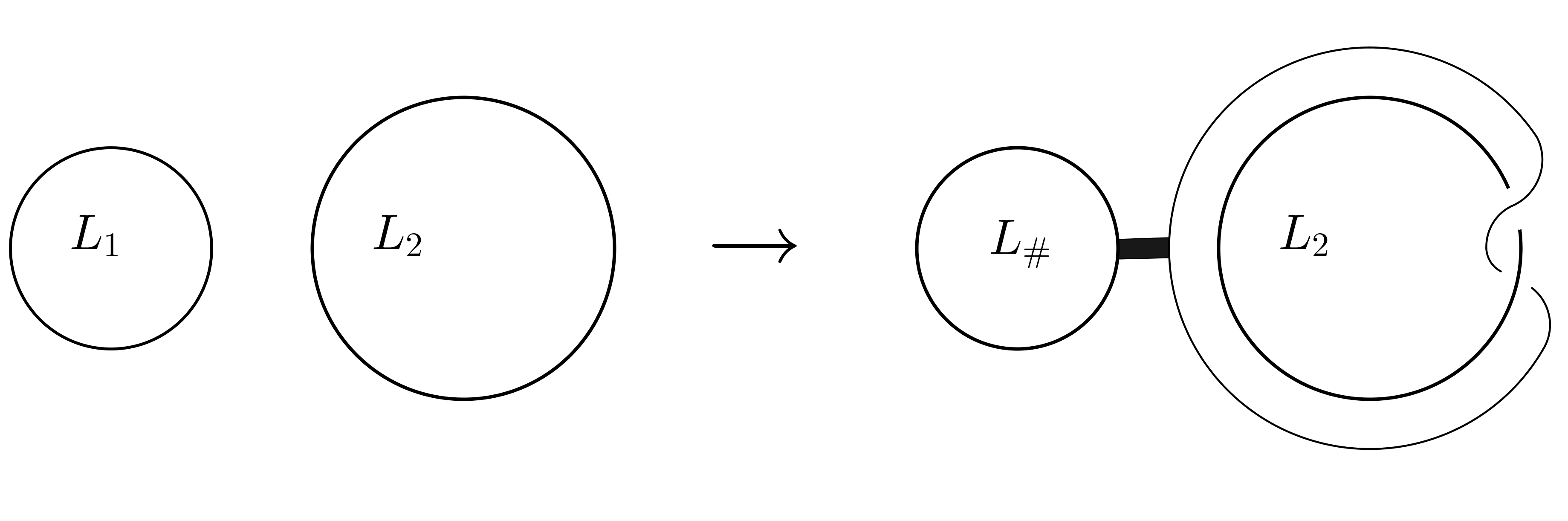}
		\caption{Move K2.}\label{movek2}
	\end{figure}
\end{itemize}

% teorema di kirby

With the introduction of these two moves, we are ready to state Kirby's theorem, which allows us to understand under which conditions two different framed links $L$ and $L'$ determine homeomorphic 3-manifolds.

\begin{teor}[{\upshape ~\cite{kirby:articolo}}]
The closed oriented manifolds obtained by surgery on two different framed links $L$ and $L'$ are homeomorphic by an orientation \mbox{preserving} homeomorphism if and only if the link $L$ can be obtained from $L'$ using a sequence of Kirby moves $K1$ and $K2$.
\end{teor}

% conseguenze del teorema di kirby

\begin{cor}\label{kirbycon}
An unknot with framing $\pm 1$ can always be removed from a framed link L with the effect of giving to all arcs that intersect the disk bounded by the unknot a full left/right twist and changing their framings by adding $\mp 1$ to each arc, assuming they represent different components of L.
\end{cor}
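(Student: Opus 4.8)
The plan is to realise the described modification by an explicit finite sequence of Moves $K1$ and $K2$; since the statement only asserts that $L$ and the modified framed link present the same $3$-manifold, Kirby's theorem then guarantees that producing such a sequence is enough. The sequence in question is the classical \emph{blow-down}. Write $\varepsilon=\pm1$ for the framing of the unknot $U$, let $D$ be a disk bounded by $U$, and let $L_1,\dots,L_m$ be the components of $L$ that meet $D$. By hypothesis these are pairwise distinct, so after an isotopy of $L$ we may assume that $D\cap L$ consists of exactly $m$ parallel vertical arcs, one on each $L_i$, and that $\texttt{lk}(L_i,U)=\pm1$ for every $i$.

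First I would unlink $U$ from the rest of $L$ by $m$ successive handle slides: for each $i$ apply Move $K2$ with $U$ in the role of $L_1$ and $L_i$ in the role of $L_2$, i.e.\ slide $L_i$ over $U$, choosing the connecting band and the orientations so that the algebraic intersection of the new $L_i$ with $D$ becomes $0$; one of the two available choices achieves this, because $\texttt{lk}(L_i,U)=\pm1$ while the $\varepsilon$-framed push-off of $U$ links $U$ by $\pm\varepsilon$. A short computation with the framing formula of Move $K2$ then shows that the net effect of the slide on the framing of $L_i$ is
\[
\Delta n_i \;=\; -\,\varepsilon\,\texttt{lk}(L_i,U)^2 \;=\; -\varepsilon \;=\; \mp 1 ,
\]
and it is precisely the hypothesis that the $m$ arcs lie on pairwise distinct components that forces $\texttt{lk}(L_i,U)^2=1$, so the shift is a single $\mp1$ rather than the general $-\varepsilon\,\texttt{lk}(L_i,U)^2$. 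After these $m$ slides $U$ is split --- algebraically first, and then geometrically after a further isotopy --- from $L_1,\dots,L_m$ and from every unchanged component, while it still carries framing $\varepsilon=\pm1$; Move $K1$ now erases it.

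It remains to recognise the tangle formed near $D$ by the arcs of $L_1,\dots,L_m$ after these operations. Each slide replaced the straight arc of $L_i$ through $D$ by (essentially) a parallel copy of the $\varepsilon$-framed push-off of $U$, and the $m$ push-offs involved are mutually parallel push-offs of the one unknot $U$; pulling them off $D$ therefore runs the whole bundle of $m$ strands once through an $\varepsilon$-twisted band, which on the standard local model is one full left- or right-handed twist of the bundle, with handedness opposite to the sign of $\varepsilon$ and matching the $\mp$ of the framing shift. I expect this last point --- carrying out the local-model computation and keeping the orientation conventions coherent throughout, so that the chosen bands genuinely cancel the intersections with $D$ and the induced twist carries the asserted sign --- to be the only genuinely delicate step; the rest is bookkeeping with the two Kirby moves already introduced. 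As a conceptual check, one can note that surgery along a $\pm1$-framed unknot $U$ amounts to a $\mp1$ Dehn twist supported in the punctured disk $D$, an operation which extends to an ambient twist of a ball around $D$ and hence drags the $m$ strands, together with their framings, exactly as the statement prescribes.
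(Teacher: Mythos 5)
Your proposal is correct and follows the same route as the paper's (two-line) proof: repeated applications of Move $K2$ to slide each component meeting the disk over the $\pm 1$-framed unknot until it is unlinked, followed by Move $K1$ to erase it; you simply supply the framing arithmetic and the local-model identification of the residual full twist that the paper leaves implicit. The only blemish is the role labelling in your invocation of $K2$ (with the paper's conventions the slid component $L_i$ should play the role of $L_1$, not $L_2$), but your actual framing computation treats $L_i$ as the component being modified, so nothing is affected.
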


\begin{proof}
We have to repeat the application of move $K2$ to each framed component. Finally we use move $K1$ to remove the unknot framed with $\pm 1$.
\end{proof}

Let us suppose to have a framed link $L$. A direct consequence of the previous corollary is the possibility to modify the framing of a component $K$ of $L$ by introducing or removing a new circle framed by $\pm 1$ which links only $K$ (see Figure~\ref{changeframe}).
\begin{figure}[!h]
	\centering
		\includegraphics[width=14 cm]{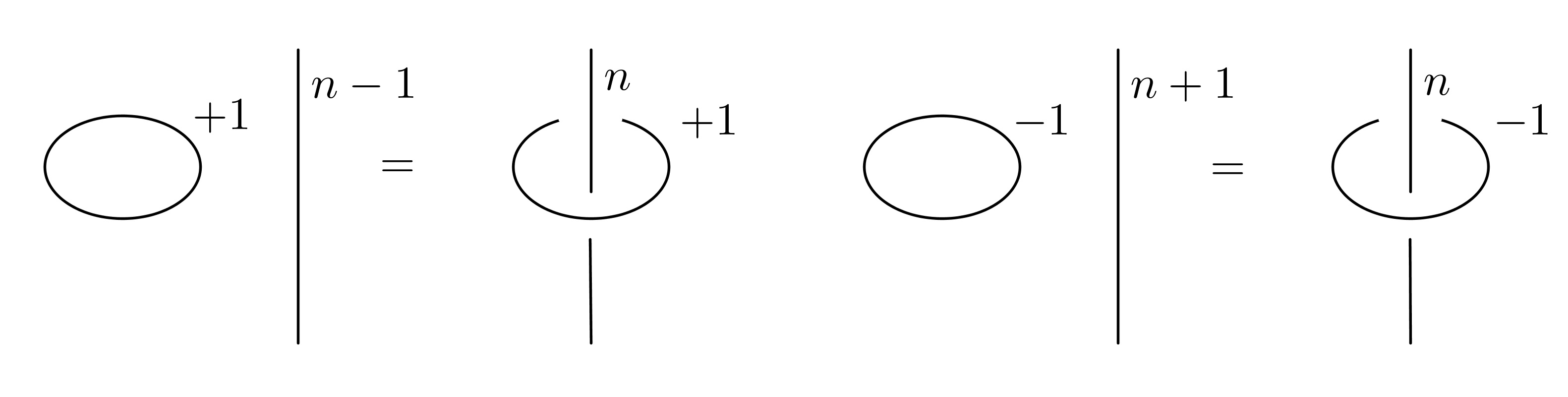}
	\caption{Changing the framing.}\label{changeframe}
\end{figure}
\\
In the same way, Corollary~\ref{kirbycon} suggests that we can eliminate a linking between two components of $L$ by introducing a $\pm 1$-framed circle (see Figure~\ref{nolink}).
\begin{figure}[!h]		
	\centering
		\includegraphics[width=14 cm]{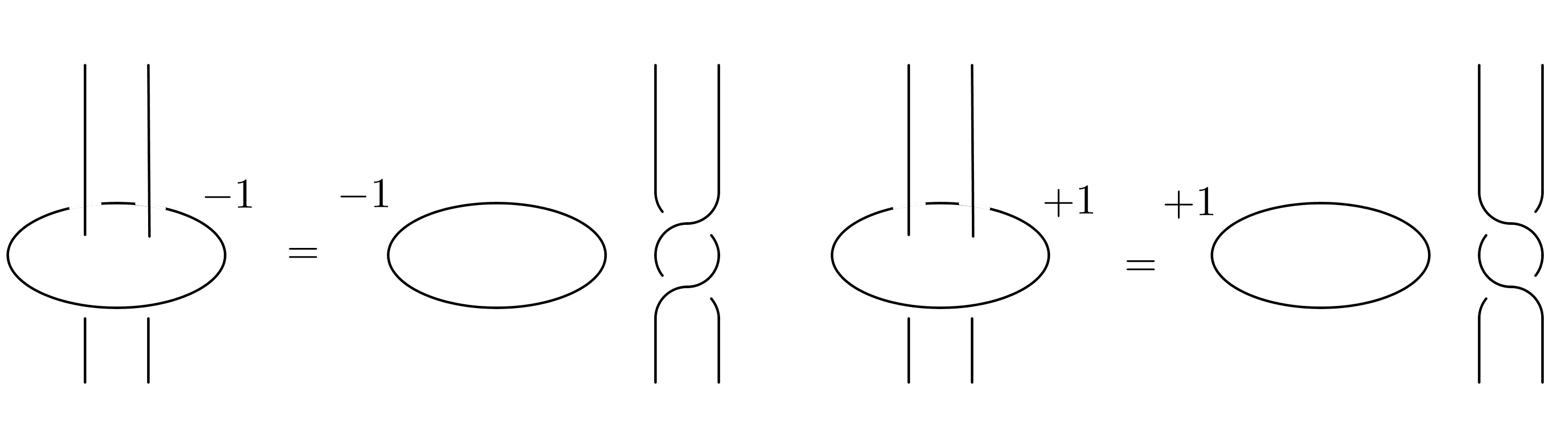}
	\caption{Eliminate a linking between two components.}\label{nolink}
\end{figure}

\subsection{Lens spaces}
A direct way to describe the space $L(p,q)$ is to think of it as the result of a rational surgery along the unknot with coefficient $-p/q$, as shown in Figure~\ref{pqsurgery}.
By assuming the previous definition for the space $L(p,q)$, it is quite immediate to understand that $L(p,q)$ and $L(-p,-q)$ are homeomorphic. For our purposes, we will not actually use this presentation, but we will prefer the integral one reported in Proposition~\ref{integralprop} (see{\upshape ~\cite[Proposition 17.3]{prasolov:libro}}).
\begin{figure}[!h]
	\centering
		\includegraphics[width=3.5cm]{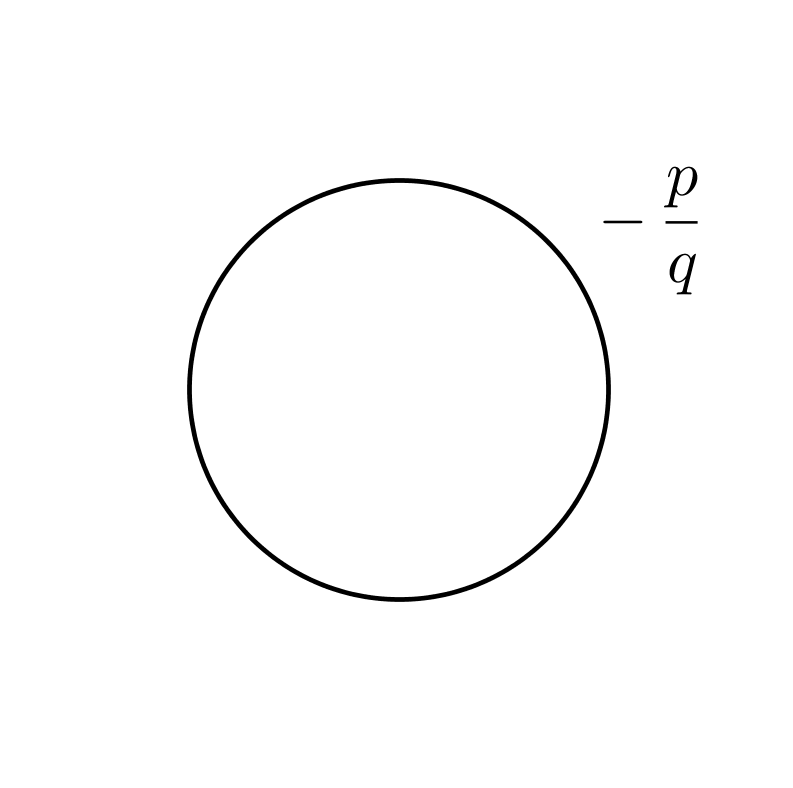}
	\caption{Rational $p/q$-surgery along the unknot.}\label{pqsurgery}
\end{figure}

\begin{prop}\label{integralprop}
Any lens space $L(p,q)$ has a surgery description given by Figure \ref{integralpq}, where $-p/q=[a_1,\ldots,a_n]$ is a continued fraction decomposition, i.e.
\[
[a_1,\ldots,a_{n}]=a_1 - \frac{1}{a_2-\frac{1}{\ldots-\frac{1}{a_{n}}}}, \hspace{15pt} a_i \in \mathbb{Z}
\]
\end{prop}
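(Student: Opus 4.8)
The plan is to start from the rational $-p/q$-surgery description of $L(p,q)$ given in Figure~\ref{pqsurgery}, which we may take as the definition of the lens space, and to convert it into an integral surgery diagram by a sequence of Kirby moves governed by the continued fraction expansion $-p/q=[a_1,\dots,a_n]$. The standard device here is the \emph{slam-dunk} move (a consequence of the Kirby calculus developed above, ultimately of Corollary~\ref{kirbycon} and Move K2): if a rationally framed unknot $U$ bounds a disk meeting exactly one other component $K$ once, and $K$ has integral framing, then $U$ can be absorbed into $K$ at the cost of modifying $K$'s framing by the appropriate rational operation; conversely one can \emph{expand} a rational framing $r$ on a knot $K$ into $K$ together with a new meridional unknot framed by the next term of the continued fraction of $r$.

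Concretely, I would argue by induction on the length $n$ of the continued fraction. First I would observe that $-p/q = a_1 - 1/(a_2 - 1/(\cdots - 1/a_n))$ can be peeled off one term at a time: writing $-p/q = a_1 - 1/x$ with $x = [a_2,\dots,a_n]$, the single unknot with rational framing $-p/q$ is replaced (by the inverse slam-dunk / rolfsen-twist expansion) by a chain of two unknots, the first with integral framing $a_1$ and the second, linking it once, with rational framing $x$. Iterating, one unwinds the whole continued fraction into a linear chain $U_1 - U_2 - \cdots - U_n$ of unknots, consecutive ones Hopf-linked, with $U_i$ carrying the integral framing $a_i$; this chain is exactly the diagram in Figure~\ref{integralpq}. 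Each step is an equivalence of surgery presentations by the theorems of Lickorish--Wallace and Kirby already quoted, so the resulting $3$-manifold is unchanged throughout, and the base case $n=1$ (an integer $a_1 = -p$, i.e. $q=1$) is immediate.

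For completeness I would also record the opposite reduction — that the linear chain with framings $a_1,\dots,a_n$ collapses back, by repeated slam-dunks from the tail $U_n$ inward, to the single unknot framed $-p/q$ — since this is what makes the two pictures genuinely equivalent rather than merely one implying the other; the arithmetic bookkeeping is precisely the continued fraction identity, and one checks that the linking matrix of the chain has determinant $\pm p$, consistent with $|H_1(L(p,q))| = |p|$.

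The main obstacle is essentially bookkeeping rather than conceptual: one must be careful with signs and orientations when expanding a rational framing into an integer plus a meridian, because the continued fraction convention $[a_1,\dots,a_n] = a_1 - 1/(a_2 - \cdots)$ (with minus signs) corresponds to a specific choice of framing and handedness for the auxiliary unknots, and getting this wrong produces $L(p,q')$ with $q' \neq q$. So the delicate step is verifying that the slam-dunk, as derived from Move K2 above, really implements the operation $r \mapsto a - 1/r$ with the signs matching our continued fraction convention; once that single local computation is pinned down, the induction runs without further difficulty. (Alternatively, one may cite~\cite[Proposition 17.3]{prasolov:libro} directly, as the statement already does, and present the Kirby-calculus argument above only as a sketch.)
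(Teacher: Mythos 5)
The paper does not actually prove this proposition: it simply cites \cite[Proposition 17.3]{prasolov:libro} and moves on. Your sketch supplies the standard argument behind that citation --- iterated (inverse) slam-dunks peeling off one term of the continued fraction at a time, turning the single $-p/q$-framed unknot into the linear chain with integral framings $a_1,\dots,a_n$ --- and in outline it is correct; the induction, the base case $q=1$, and the sign bookkeeping you flag are exactly the content of the reference's proof. One point deserves tightening: you present the slam-dunk as ``a consequence of Corollary~\ref{kirbycon} and Move K2,'' but the Kirby moves as stated in Section 2 apply only to integrally framed links, whereas the slam-dunk you need manipulates a component with a \emph{rational} coefficient. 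The clean justification is the direct geometric one: if $U$ is a meridian of $K$ and one performs the surgery on $K$ first, then $U$ can be isotoped into the re-glued solid torus, where its surgery is absorbed into a change of the gluing for $K$, producing the coefficient transformation $a - 1/r$; alternatively one invokes the rational (Rolfsen) calculus, which is developed in \cite{prasolov:libro} precisely for this purpose. With that substitution your argument is complete and is essentially the proof the cited source gives; your closing remarks about the reverse collapse and the determinant of the linking matrix being $\pm p$ are sound consistency checks but not logically necessary once each expansion step is known to preserve the resulting $3$-manifold.
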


\begin{figure}[!h]
	\centering
		\includegraphics[width=7cm]{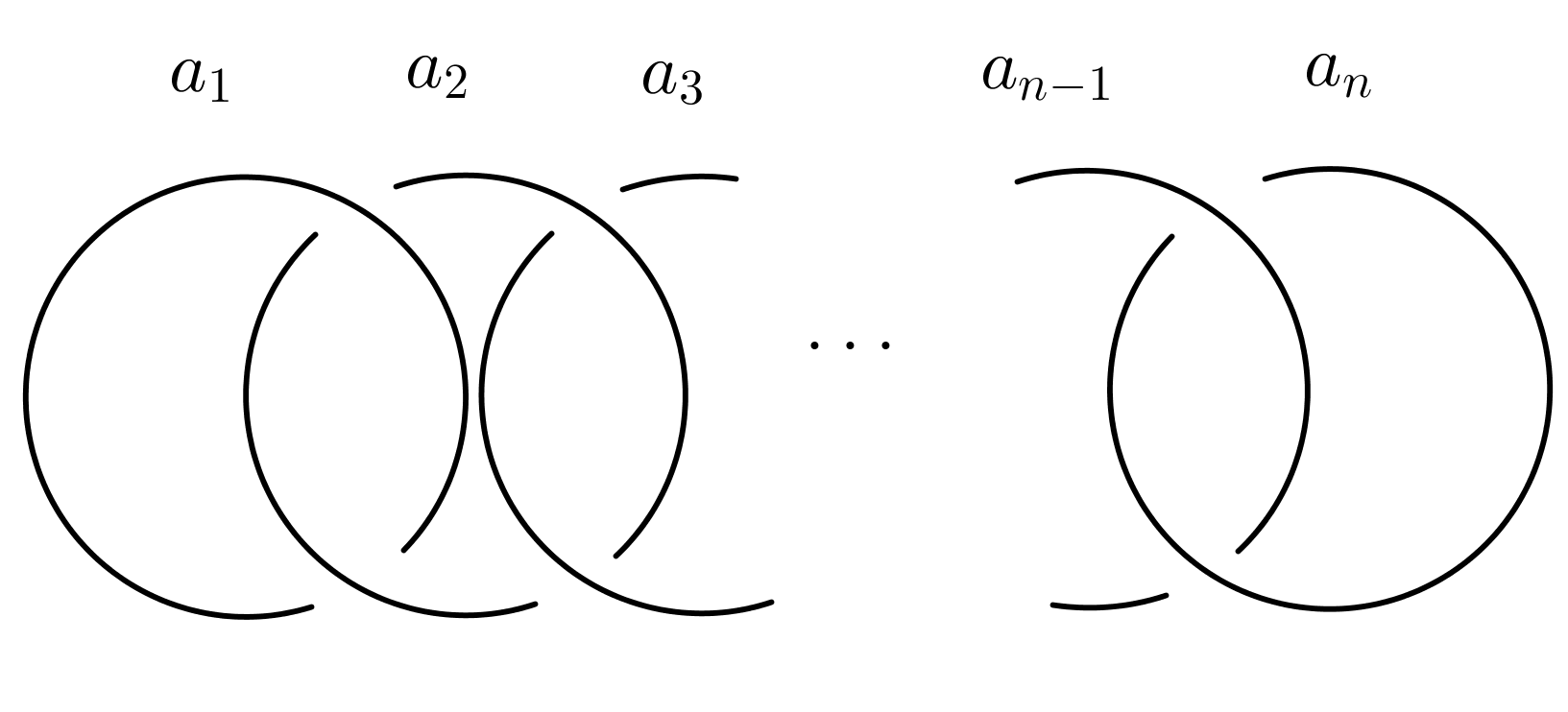}
	\caption{Integral surgery for $L(p,q)$.}\label{integralpq}
\end{figure}

In the case $p \neq 0$, the space $L(p,q)$ has another useful model. It can be thought of as the quotient of $S^3$ under a properly discontinuous action of the group $\mathbb{Z}_p$. To see this, let us consider the 3-sphere as a subspace of $\mathbb{C}^2$, that is 
\[
S^3:= \{ (z,w) \in \mathbb{C} : |z|^2 + |w|^2 =1 \}.
\]

We define on it an action of the cyclic group $\mathbb{Z}_p$ as follows: let us fix a pair of coprime integer $(p,q)$ satisfying $0 \leq q < p$ and put
\[
\cdot: \mathbb{Z}_p \times S^3 \rightarrow S^3, \hspace{15pt} \bar{n} \cdot (z,w):=(e^{\frac{2\pi in}{p}}z,e^{\frac{2\pi i nq}{p}}w).
\]

This map reveals to be a free action of the group on the 3-sphere, indeed
\begin{enumerate}
	\item $\bar{0} \cdot (z,w)=(e^0z,e^0w)=(z,w)$
	\item $\bar{m} \cdot ( \bar{n} \cdot (z,w))=\bar{m} \cdot (e^{\frac{2\pi in}{p}}z,e^{\frac{2\pi i nq}{p}}w)=(e^{\frac{2\pi i(n+m)}{p}}z,e^{\frac{2\pi i (n+m)q}						{p}}w)=(\bar{n}+ \bar{m})(z,w)$
	\item $\bar{n}(z,w)=(z,w)$ implies $e^{\frac{2 \pi i n}{p}}z=z$ thus $z=0$ or $e^{\frac{2 \pi i n}{p}}=1$. In both cases we reduce to $\bar{n}=0$.
\end{enumerate}

Moreover, each element $\bar{n} \in \mathbb{Z}_p$ acts as a homeomorphism or, equivalently, we have a representation $\rho: \mathbb{Z}_p \rightarrow \text{Homeo}(S^3)$.
\\
It can be shown that we may define the quotient space $S^3/\mathbb{Z}_p$ as the lens space $L(p,q)$ for $p \neq 0$. Moreover, from this particular presentation we easily argue that $L(p,q)$ and $L(p,q+np)$ are homeomorphic. This remark and the homeomorphism $L(p,q) \cong L(-p,-q)$ justify the assumption generally made in literature $0 \leq q <p$ whenever $p>0$.

\subsection{Links in lens spaces}

So far we have understood how to represent a 3-dimensional manifold via framed link and when two different framed links return homeomorphic 3-manifolds. The next step is to get a suitable way to visualize links in a general closed orientable 3-manifold $M^3$. To do this, we will use Theorem \ref{lickorish} to represent $M^3$ as an integral surgery along a link (or a knot) in $S^3$.

\begin{deft}
Let $M^3$ be a closed orientable 3-manifold which is obtained by an integral surgery along a framed link $I$ in $S^3$. An link $L$ in $M^3$ is represented as a \textit{mixed link} in $S^3$ if it is given a diagram of the link $I \cup L$ in $S^3$. The link $I$ is called \textit{fixed part}, while the link $L$ is called \textit{moving part} (see Figure~\ref{mixedlink}).
\end{deft}

\begin{figure}[!h]
	\centering
		\includegraphics[width=7cm]{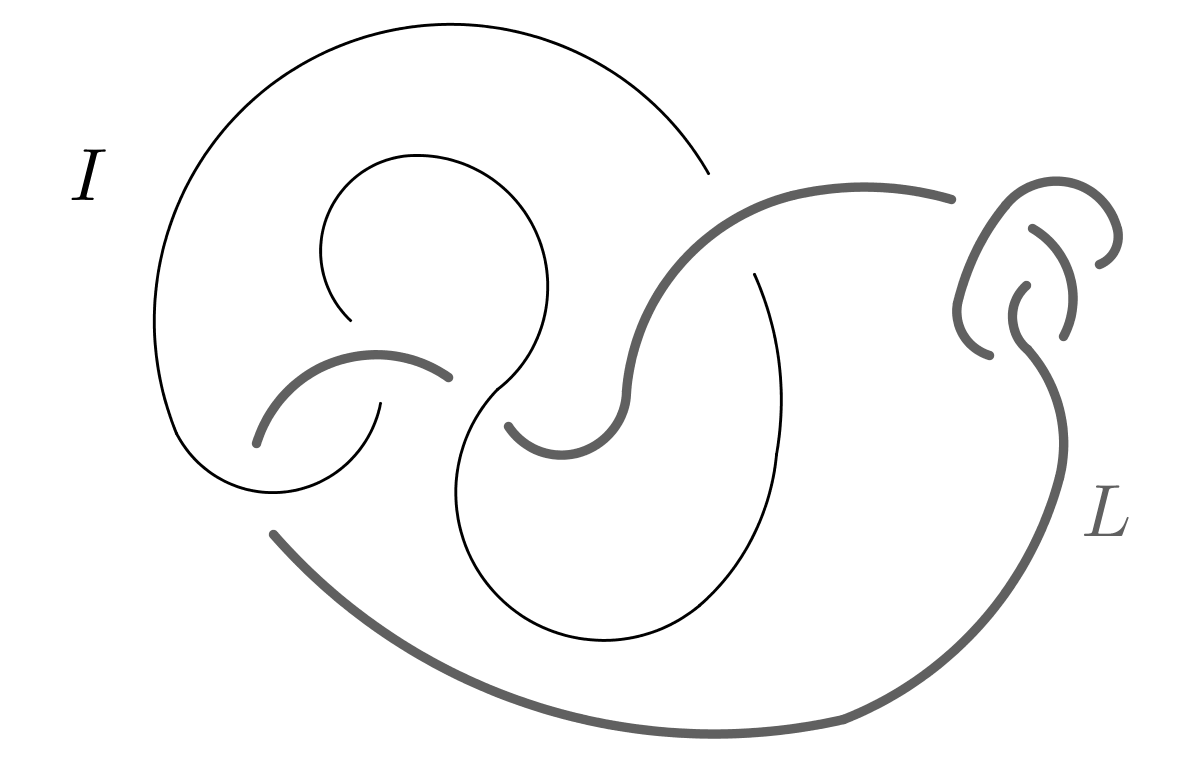}
	\caption{Example of mixed link.}\label{mixedlink}
\end{figure}

Since lens spaces are the result of a $-p/q$-surgery along an unknotted circle in $S^3$, we can try to extend the notion of mixed link diagrams by admitting rational surgery coefficients. With this assumption, a link $L$ in a lens space is representable as a mixed link whose fixed part coincides with the unknot and the surgery coefficient may be rational.
\\
Other two representations of links in lens spaces will be useful for our purposes, namely the punctured disk diagram introduced by~\cite{gabrovsek:articolo} and the closely related band diagram introduced by~\cite{turaev:articolo}. Their definitions can be recovered from a mixed link diagram representation as follows. By denoting the fixed part by $U$ and recalling that $S^3 = \mathbb{R}^3 \cup \{ \infty \}$, we can pick up a point $p$ from $U$ and suppose to identify it with $\infty$. By considering on $\mathbb{R}^3$ the standard coordinates system $(x,y,z)$, we can assume that $U$ is described by the $z$-axis and then we project $U \cup L$ to the $xy$-plane by the map $\pi: \mathbb{R}^3 \rightarrow \mathbb{R}^2$, $\pi(x,y,z)=(x,y)$ (see Figure~\ref{linklens}). We need to suppose that the projection map is \textit{regular}, that is $\pi$ must satisfies the following conditions:
\begin{enumerate}
\item the differential $d_m(\pi|_L)$ of the restriction $\pi|_L$ must have \mbox{rank 1} for every point $m \in L$;
\item no more than two distinct points of the link are projected on one and the same point on the plane;
\item the set of double points, i.e. those on which two points project, is finite and at each double point the projection of the two tangents do not coincide.
\end{enumerate}

\begin{deft}\label{pdiskdiagram}
A \textit{punctured disk diagram} of a link $L$ in $L(p,q)$ is a regular projection of $L \cup U$, where $U$ is described by a single dot on the plane.
\end{deft}

\begin{figure}[!h]
	\centering
		\includegraphics[width = 8.5cm]{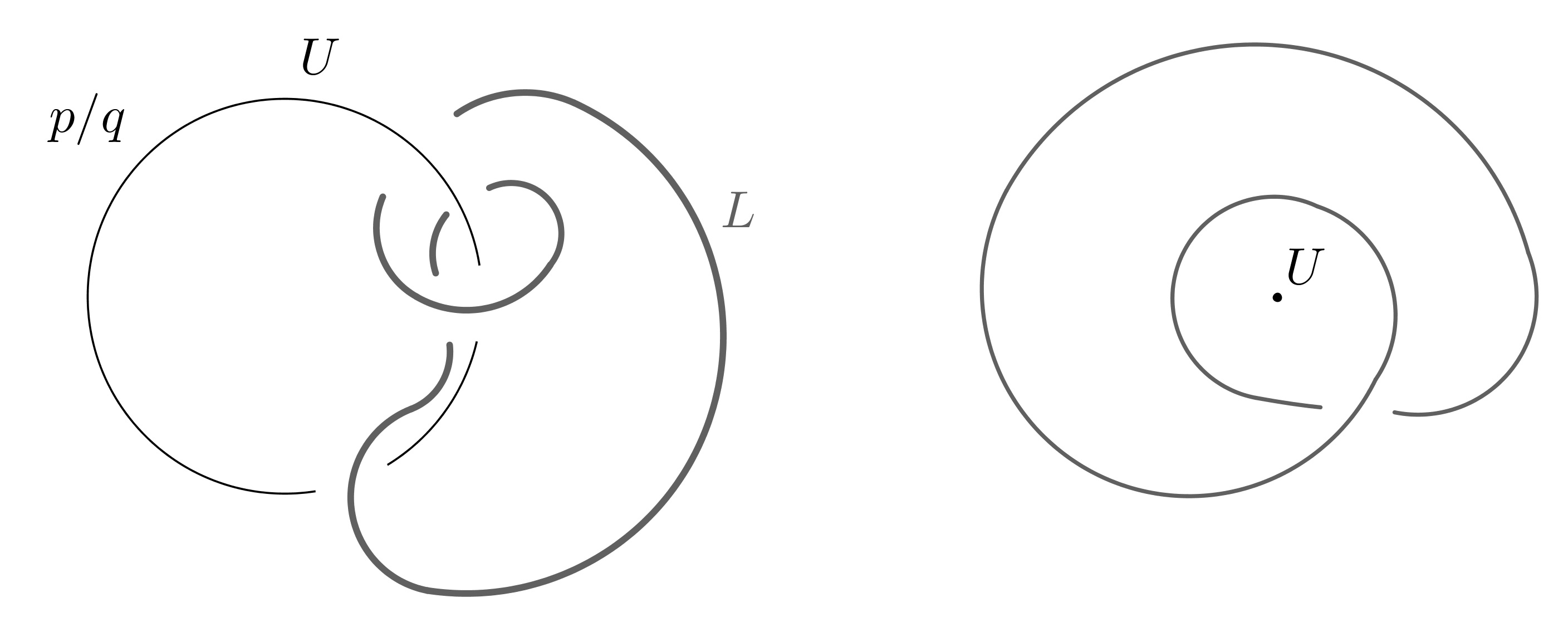}
	\caption{Link in $L(p,q)$: mixed link and punctured disk diagram.} \label{linklens}
\end{figure}

Supposing that the diagram is all contained inside a disk, we remove a neighborhood of the dot representing $U$ in such a way that the diagram of $L$ lies in an annulus. Then we cut along a line orthogonal to the boundary of the annulus, being careful to avoid the crossings of $L$. Finally, we deform the annulus to get a rectangle (Figure \ref{toricband}).

\begin{deft}\label{bandiagram}
The described procedure gives back a presentation of a link in $L(p,q)$ called \textit{band diagram}.
\end{deft}

\begin{figure}[!h]
	\centering
		\includegraphics[width = 8.5cm]{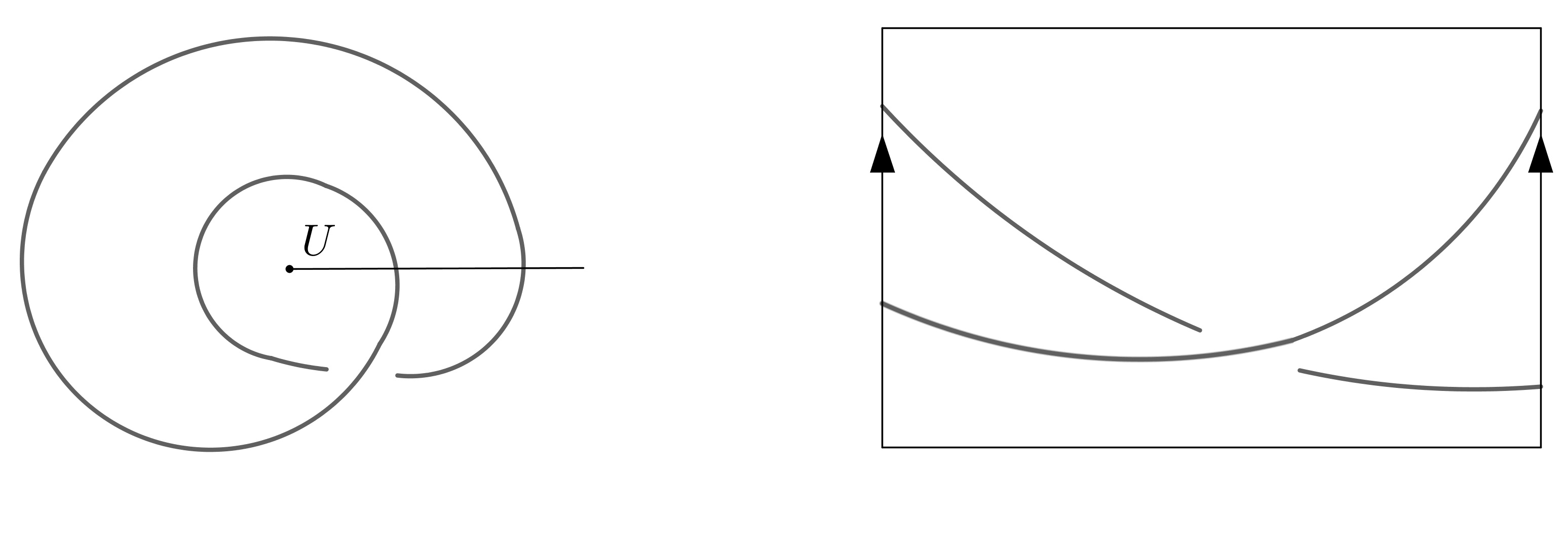}
	\caption{Link in $L(p,q)$: punctured disk diagram and band diagram.}\label{toricband}
\end{figure}

Since a link contained in the 3-sphere may be easier to study rather than a link in $L(p,q)$, we might ask quite naturally how to get a \textit{lift} in the 3-sphere of a link in $L(p,q)$. If $\omega_{p,q}:S^3 \rightarrow L(p,q)$ is the covering map with respect to the action of $\mathbb{Z}_p$ and if $L$ is a link in $L(p,q)$, we say that a link $L'$ contained in the 3-sphere is a lift of $L$ if it coincides with the preimage $L'=\omega_{p,q}^{-1}(L)$. Given a band diagram of a link in $L(p,q)$, the following proposition gives back a useful method to construct its lift in $S^3$. We define the Garnside braid $\Delta_n$ in $n$ strands as 

\[
\Delta_n:=(\sigma_{n-1}\sigma_{n-2}\ldots\sigma_{1})(\sigma_{n-2}\ldots\sigma_1)\ldots\sigma_1,
\]
\begin{figure}[!h]
	\centering
		\includegraphics[width=8.5cm]{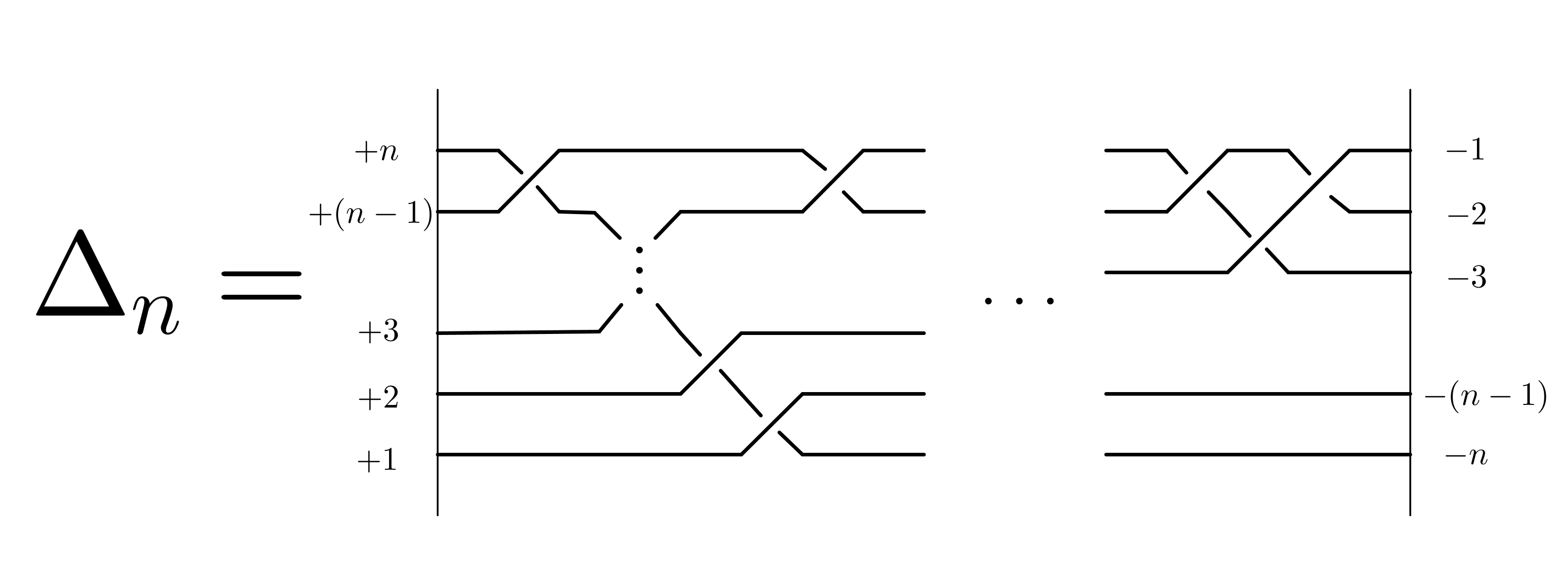}
	\caption{Garnside braid $\Delta_n$}
\end{figure}

where $\sigma_i$ are the Artin's generators of the group of braids in $n$-strands $B_n$.

\begin{prop}[{\upshape ~\cite[Proposition 6.4]{manfredi:tesi}}]\label{lift}
Let $L$ be a link in the lens space $L(p,q)$, with $0 \leq q < p$, and let $B_L$ be a band diagram for L with $n$ boundary points. Then a diagram for the lift $L'$ in the 3-sphere $S^3$ can be found by juxtaposing $p$ copies of $B_L$ and closing them with the braid $\Delta_n^{2q}$ (see Figure~\ref{linkliftprop}).
\end{prop}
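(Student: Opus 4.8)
The plan is to read the lift off directly from the geometry of the covering $\omega_{p,q}\colon S^3\to L(p,q)$, rather than from the surgery picture. Write $U$ for the fixed part of the mixed link diagram and $V:=\overline{S^3\setminus\nu(U)}$ for the solid torus in which the moving part $L$ — and hence the band diagram $B_L$ — actually lies. Since the core of $V$ maps to a generator of $\pi_1(L(p,q))\cong\mathbb Z_p$, the preimage $\tilde V:=\omega_{p,q}^{-1}(V)$ is connected and $\omega_{p,q}|_{\tilde V}\colon\tilde V\to V$ is the $p$‑fold cyclic covering of the solid torus that unwraps its core. Moreover $\tilde V$ is again an \emph{unknotted} solid torus in $S^3$: it is the complement of $\omega_{p,q}^{-1}(\text{core of }\nu(U))$, the lift of a Heegaard core, hence an unknot. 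Consequently a diagram for $L'=\omega_{p,q}^{-1}(L)\subset\tilde V$ may be read off from any product structure on $\tilde V$, and it suffices to describe $\omega_{p,q}|_{\tilde V}$ in coordinates compatible with the band diagram.

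First I would make the covering explicit through the model $L(p,q)=S^3/\mathbb Z_p$. Split $S^3$ into the two Heegaard solid tori cut out by $|z|^2=1/2$; the $\mathbb Z_p$‑action preserves both, and on one of them it rotates the core circle freely — this is the torus to be identified with $\tilde V$. In coordinates $(\theta,x)$ on $S^1\times D^2$, where $\theta=\arg z$ parametrises the meridian disks and $x$ the disk factor coming from the $w$‑coordinate, the generator of the deck group acts by $(\theta,x)\mapsto(\theta+\tfrac1p,\,R_{2\pi q/p}(x))$, with $R_\alpha$ the rotation of $D^2$ by $\alpha$. Passing to the quotient and rescaling, $\omega_{p,q}|_{\tilde V}$ becomes $(\theta,x)\mapsto(p\theta\bmod 1,\,R_{2\pi q\theta}(x))$, where on the target one uses exactly the product structure underlying the band diagram (meridian disks of $V$ = half‑planes around $U$). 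The integer occurring here is the $q$ of the action, fixed unambiguously by $0\le q<p$.

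Now I would pull the band diagram back along this map. Cutting $V$ along one meridian disk turns it into $I\times D^2$ carrying the tangle $B_L$ with its $n$ endpoints on the two ends; the preimage of this disk in $\tilde V$ is a family of $p$ disjoint meridian disks, cutting $\tilde V$ into $p$ blocks $\cong I\times D^2$. By the formula for $\omega_{p,q}|_{\tilde V}$, the tangle in the $k$‑th block is a copy of $B_L$ rotated in the disk factor by $2\pi qk/p$, so that going once around $\tilde V$ the disk factor makes $q$ full turns. Juxtaposing the $p$ blocks and combing this spiralling rotation into a single arc of the closure — using that $\Delta_n^2$ is the full‑twist braid on $n$ strands, the generator of the centre of $B_n$ — the closure is realised by the braid $\Delta_n^{2q}$. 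This is precisely the diagram in the statement (see Figure~\ref{linkliftprop}).

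The one genuinely delicate point is pinning down the exponent: that the braid is $\Delta_n^{2q}$ with the \emph{same} $q$, and not $q^{-1}\bmod p$, $p-q$, or a twist of the opposite sign. This forces one to match carefully the product structure on $V$ used to build $B_L$ with the Heegaard‑torus description of the $\mathbb Z_p$‑action on $S^3$, keeping track of which Heegaard core is $U$ and of every orientation convention; the normalisation $0\le q<p$ is exactly what makes the answer well defined. Everything else — connectedness of $\tilde V$ and of $L'$, unknottedness of $\tilde V$ in $S^3$, the isotopy collecting the rotations, and the identification of a full twist with $\Delta_n^2$ — is routine.
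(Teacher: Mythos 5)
The paper contains no proof of this proposition to compare against: it is imported wholesale from Proposition~6.4 of the cited thesis. Your route --- restrict the covering $\omega_{p,q}$ to the Heegaard solid torus $V=\overline{S^3\setminus\nu(U)}$ containing $L$, identify $\tilde V=\omega_{p,q}^{-1}(V)$ as a connected, unknotted solid torus on which the covering is a twisted $p$-fold cyclic cover, cut along the $p$ lifts of the meridian disk underlying the band diagram, and comb the accumulated rotation into a closing full twist --- is the natural argument, and the soft steps (connectedness of $\tilde V$ because the core of $V$ generates $\pi_1(L(p,q))$, unknottedness because $\tilde V$ is a Heegaard solid torus of $S^3$, the decomposition into $p$ blocks each carrying a rotated copy of $B_L$, and the identification of a full $2\pi$ rotation of the $n$ marked points with $\Delta_n^2$) are all correct.

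The gap is the one you flag yourself, and it cannot be dismissed as routine convention-matching, because the exponent $2q$ is the entire content of the statement. Two things are missing. First, you must actually decide which of the two Heegaard solid tori of $S^3$ is $\tilde V$, i.e.\ which one covers the complement of the fixed part $U$ used to draw $B_L$: on one of them the generator of the deck group advances the core by $2\pi/p$ while rotating the meridian disk by $2\pi q/p$, on the other the rotation numbers $1/p$ and $q/p$ exchange roles, so running your argument in the wrong torus produces $\Delta_n^{2q'}$ with $qq'\equiv 1 \pmod{p}$ --- in general a different link. Your stated criterion (``on one of them it rotates the core circle freely'') does not select a torus, since the $\mathbb{Z}_p$-action is free on all of $S^3$ and rotates both cores. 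Second, making that choice requires explicitly aligning the $-p/q$-surgery presentation of $L(p,q)$ --- in which $U$, $V$, the meridian-disk product structure and hence $B_L$ are defined --- with the quotient model $S^3/\mathbb{Z}_p$, orientations included; this same alignment is what fixes the sign of the closing twist ($\Delta_n^{2q}$ versus $\Delta_n^{-2q}$). Until that identification is written down, the proposition's actual assertion has not been verified; once it is, the rest of your argument closes the proof.
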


\begin{figure}[!h]
	\centering
		\includegraphics[width=8cm]{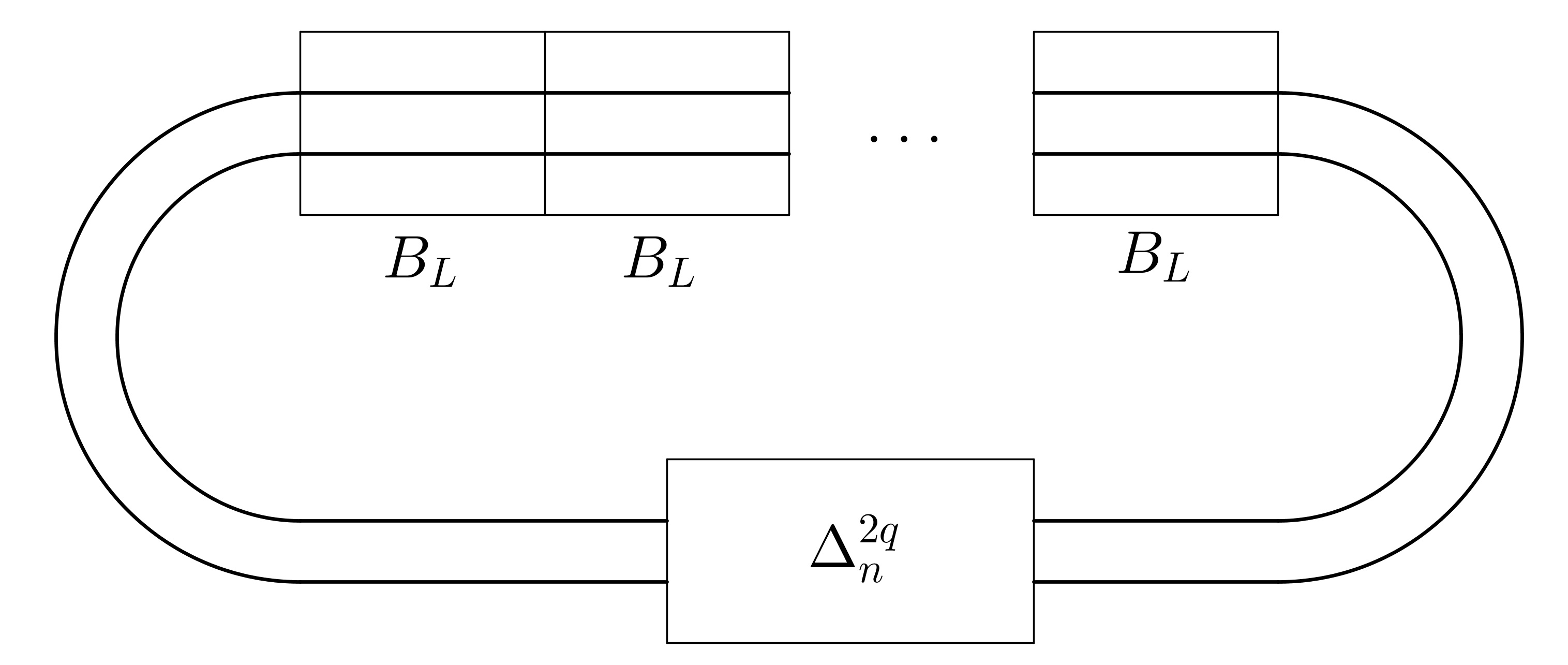}
	\caption{Diagram of the lift starting from the band diagram.}\label{linkliftprop}
\end{figure}
\section{Fibered knots, open book decompositions and contact structures}

From now until the end of the section $M^3$ stands for a closed orientable 3-dimensional manifold.

\subsection{Fibered knots and links}

Intuitively a fibered link in $M^3$ is a link whose complement is a locally trivial bundle over $S^1$. However, this assumption it is not sufficient to give a correct definition because we have also to require that the bundle map behaves nicely \mbox{near L.}  

% definzione di nodo fibrato in M^3

\begin{deft} \label{fibered}
A link $L$ in $M^3$ is a \textit{fibered link} if the two following conditions hold:
\begin{itemize}
	\item the complement of the link is the total space of a locally trivial bundle over the base space $S^1$, i.e. there exists a map $p: M^3 \setminus L \rightarrow S^1$ which is a locally trivial bundle.
	\item for each component $L_i$ of the link $L$ there exists a trivializing homeomorphism $\theta:\nu(L_i) \rightarrow S^1 \times D^2$ such that the following diagram commutes
	\[
	\xymatrix{ 
	\nu(K) \setminus K \ar[d] ^{p} \ar[r]^-{\theta} & S^1 \times (D^2 \setminus \{0\}) \ar[dl]^{\pi}\\
	S^1\\
	}
	\]
where $\pi(x,y):=\frac{y}{|y|}$.

\end{itemize}
\end{deft} 

\begin{oss}
Two fibered link which are representative of the same homeo-equivalence class may have different fibration. Two homeo-equivalent link $L_1$, $L_2$ will have the same fibration, i.e. they will be equivalent as fibered link, if the homeomorphism which realizes the equivalence respects the fibration maps.
\end{oss}

A generic bundle $E$ with fiber $F$ over $S^1$ can be described as
\[
E = \frac{F \times [0,2\pi]}{(x,0) \sim (h(x),2\pi)}
\]
where $h: F \rightarrow F$ is a homeomorphism of the fiber and $\sim$ indicates that we are identifying the corresponding points. This follows from the fact that $S^1$ is homeomorphic to $[0,2\pi]$ with endpoints identified and the interval $[0,2\pi]$ is contractible, so every bundle with fiber $F$ over $[0,2\pi]$ is trivial. The homeomorphism $h: F \rightarrow F$ is called \textit{monodromy}. Moreover, if we define the \textit{mapping torus} relative to the map $h$ as the quotient space
\[
T_h := \frac{F \times [0,2\pi]}{ (x,0) \sim (h(x),2\pi)},
\]
we can think of $E$ as the mapping torus of the associated monodromy map $h$.
\\
All the previous considerations allow us to define the monodromy associated to a fibered knot as the monodromy map relative to the knot complement.

\begin{es} \label{trivial}
Let $S^3$ be the unit sphere in $\mathbb{C}^2$, and $(z_1,z_2)=(\rho_1 e^{i\theta_1},\rho_2e^{i\theta_2})$ be two coordinate systems.
If we consider the unknot $U=\{z_1=0\}=\{\rho_1=0\}$, the complement fibers:
\[
\pi_U: S^3 \setminus U \rightarrow S^1, \hspace{5pt} (z_1,z_2) \mapsto \frac{z_1}{|z_1|}.
\]

The equivalent expression of this fibration in terms of polar coordinates is given by $\pi_U(\rho_1 e^{i\theta_1}, \rho_2 e^{i\theta_2})=\theta_1$.\\
If we represent $S^3$ as spanned by the rotation of the 2-sphere $\mathbb{R}^2 \cup \{ \infty \}$ around the circle $l \cup \{ \infty \}$, the point $P$ generates the unknot $U$ and each arc connecting $P$ and $P'$ generates a open 2-dimensional disk (see Figure \ref{funknot}). These disks exhaust the knot complement, are disjoint and parametrized by the circle $l \cup \{ \infty \}$. Therefore, $S^3 \setminus U \cong S^1 \times \text{int}(D^2)$. The fiber of the projection map $\pi_U$ results to be homeomorphic to the interior of the disk $D^2$ and the associated monodromy is the identity map on the disk.

\begin{figure}[!h]
	\centering
		\includegraphics[width=5 cm]{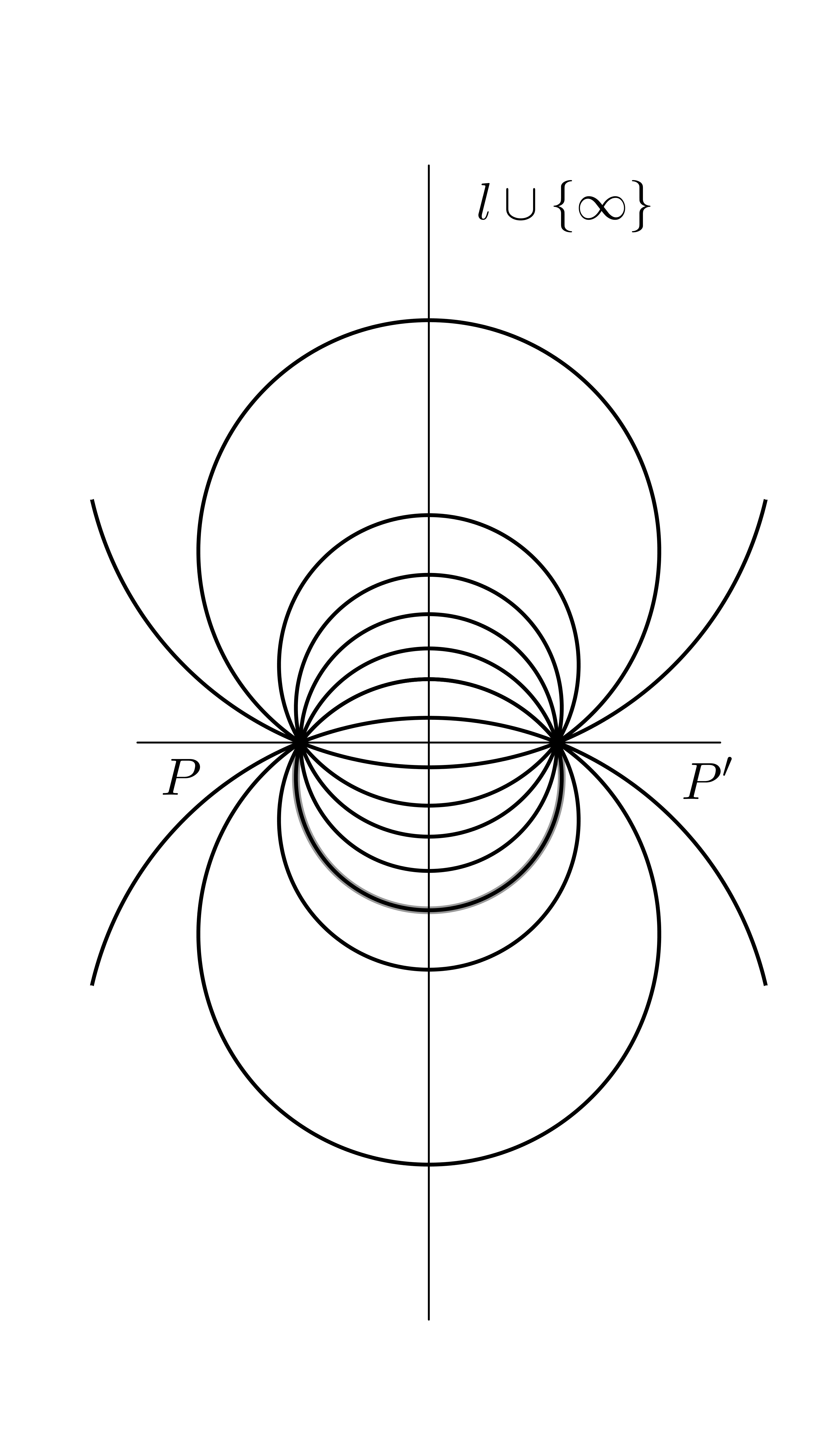}
	\caption{Fibering the unknot.}\label{funknot}
\end{figure}

\end{es}

% esempio hopf band

\begin{es}\label{hband}
Using the coordinate systems adopted in the previous example, let $H^+=\{(z_1,z_2)\in S^3: z_1z_2=0\}$ and $H^-=\{(z_1,z_2)\in S^3: z_1\bar{z_2}=0\}$. We will call $H^+$ \textit{positive Hopf link} and $H^-$ \textit{negative Hopf link}, respectively. These links are represented in Figure \ref{hopfband}.

\begin{figure}[!h]
	\centering
		\includegraphics[width=6.5 cm]{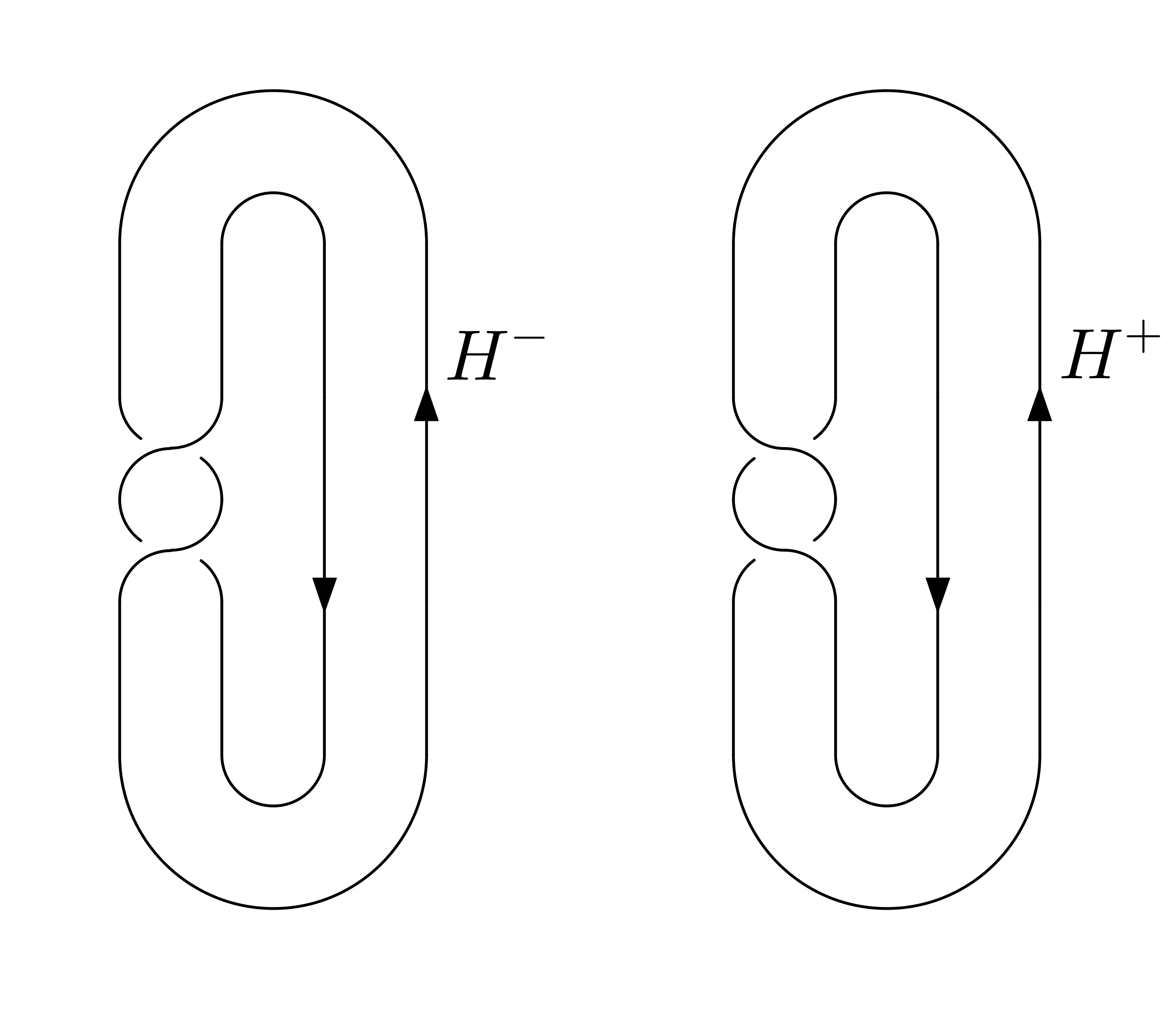}
	\caption{Diagrams of $H^+$ and $H^-$.}\label{hopfband}
\end{figure}

The following maps give the desired fibrations
\begin{align*}
&\pi^+: S^3 \setminus H^+ \rightarrow S^1, \hspace{5pt} (z_1,z_2) \mapsto \frac{z_1z_2}{|z_1z_2|}\\
&\pi^-: S^3 \setminus H^- \rightarrow S^1, \hspace{5pt} (z_1,z_2) \mapsto \frac{z_1\bar{z_2}}{|z_1\bar{z_2}|},
\end{align*}
and, when written in polar coordinates, they become $\pi ^\pm (\rho_1 e^{i\theta_1}, \rho_2 e^{i\theta_2})=\theta_1 \pm \theta_2$. The fibers of the map $\pi^\pm$ are homeomorphic to the interior of surfaces with boundary called \textit{positive Hopf band} $A^+$ and \textit{negative Hopf band} $A^-$, respectively. Both of them result to be homeomorphic to the annulus $S^1 \times [0,1]$. \\
In order to define the associated monodromy, it is useful to realize the annulus as the complex subspace $A:=\{ z \in \mathbb{C}: 1 \leq |z| \leq 2\}$. We define the \textit{positive Dehn twist along the core circle $\gamma:=\{ |z|=3/2 \}$} as the map given by
\[
D_\gamma: A \rightarrow A, \hspace{10pt} D_\gamma(\rho e^{i\theta})=\rho e^{i(\theta + 2 \pi (\rho -1))}.
\]

Since this map is actually a homeomorphism, we can invert it and its inverse is called \textit{negative Dehn twist along the core circle $\gamma$}. The image of a positive Dehn twist is reported in Figure \ref{dehntwist}. In the case of the maps $\pi^\pm$, the monodromy map coincides with a Dehn twist along the core circle of the annulus, positive or negative respectively.

\begin{figure}[!h]
	\centering
		\includegraphics[width=10.5 cm]{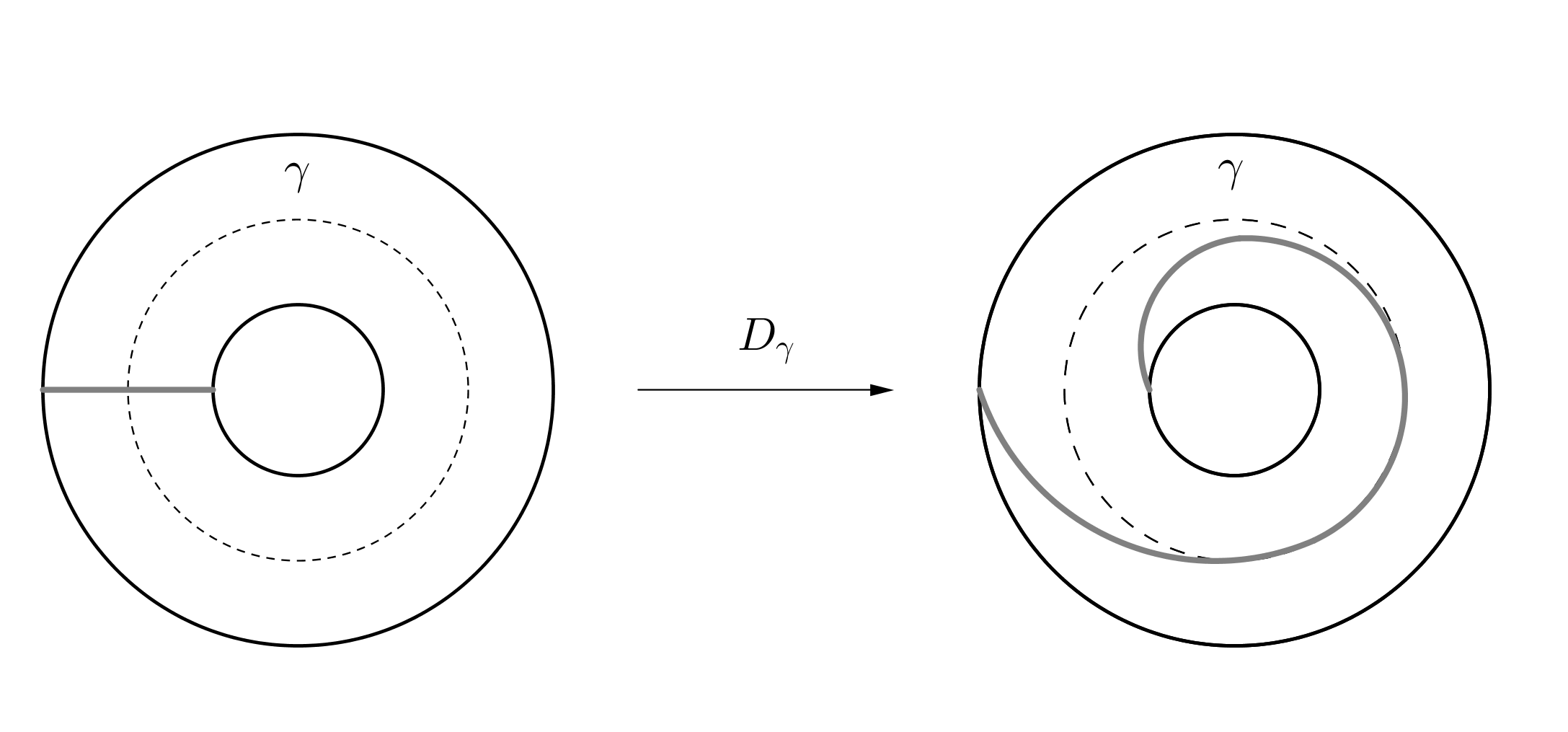}
	\caption{Positive Dehn twist about the core circle.} \label{dehntwist}
\end{figure}
\end{es}

Our aim now is to describe two moves, called plumbing and twisting, which enable us to construct all the possible fibered knots in $M^3$ starting from a fixed one. From now until the end of the section we will refer to a fibered link $L$ as a pair $(\overline{F},L)$, where $\overline{F}$ is the closure of the fiber $F$ relative to $L$ in $M^3$.\\
Let us now introduce two constructions for fibered links that change the link but maintain the property of being fibered:
\begin{description}
	\item $(A)$ \textbf{Plumbing}\\
	Suppose $(\overline{F},L)$ is a fibered pair in the particular case that the ambient \mbox{manifold} is the 3-sphere $S^3$ and let $\alpha$ be an arc included in $\overline{F}$ with endpoints on $\partial F$. We can find a 3-ball $D_\alpha \subset S^3$ such that
	\begin{enumerate}
		\item $D_\alpha \cap \overline{F}  \subset \partial D_\alpha$,
		\item there is an embedding $\varphi_\alpha: [-1,1] \times [-1,1] \rightarrow \overline{F}$ with \mbox{$\varphi_\alpha([-1,1]\times \{0\})=\alpha$} and $\text{Im}(\varphi_\alpha)= D_\alpha \cap \overline{F}$.
	\end{enumerate}
	In another copy of $S^3$ we take a Hopf pair $(A^\pm,H^\pm)$, either positive or negative, with an arc $\beta \subset A^\pm$ connecting the two boundary components. Again we find a 3-ball $D_\beta$ with the same properties as above and its corresponding map $\varphi_\beta$.

	\begin{figure}[!h]
	\centering
		\includegraphics[width=13.5 cm]{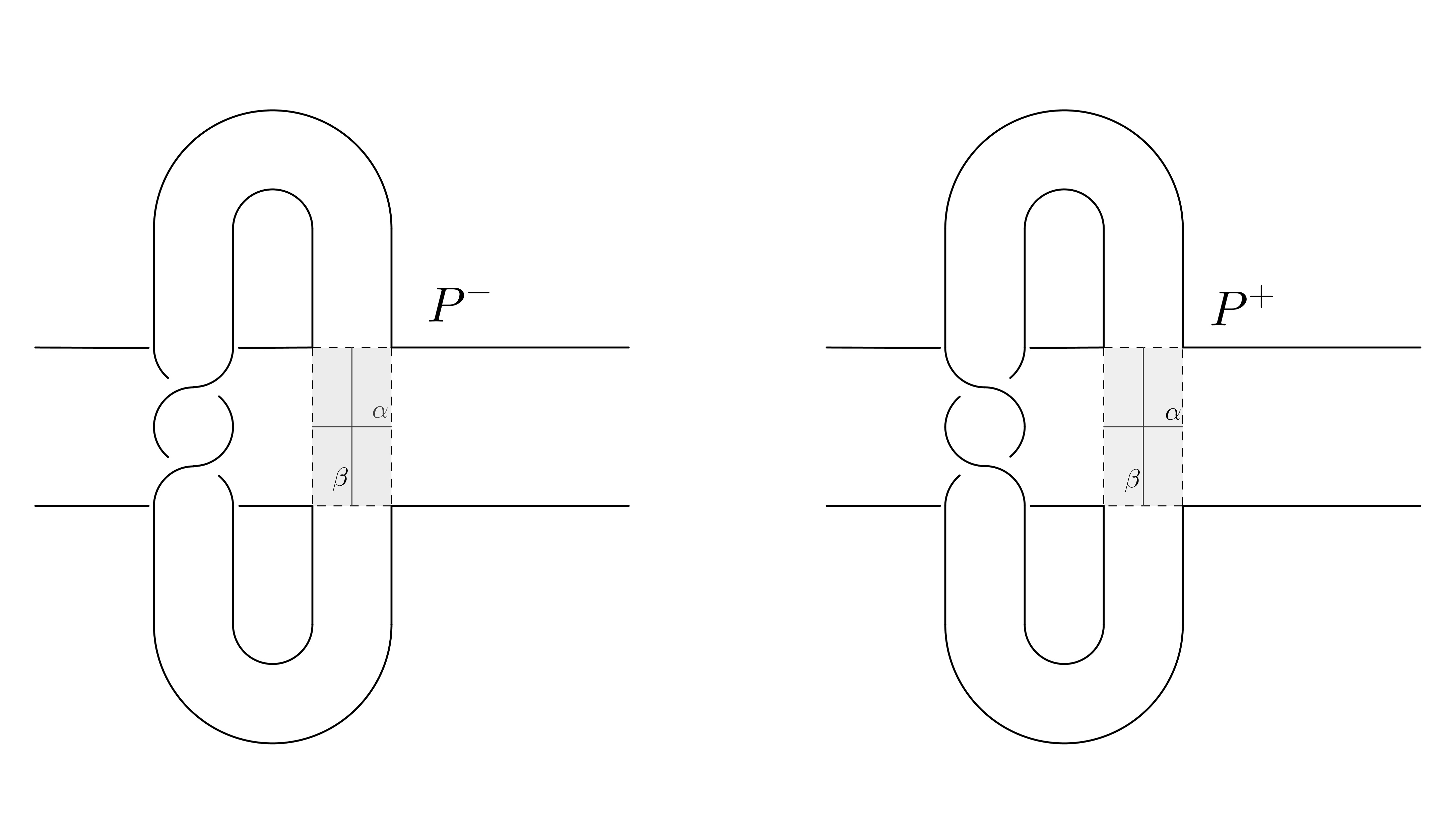}
	\caption{Plumbing.}
	\end{figure}

	Identify $\partial D_\alpha$ with $\partial D_\beta$ by an orientation reversing map $f$ satisfying 						$f(\varphi_\alpha(x,y))=\varphi_\beta(y,x)$. Then form 
	\[
	(S^3 - \text{int} (D_\alpha)) \cup_f (S^3 - \text{int}(D_\beta))
	\]
	and
	\[
	\overline{F} \cup_{f|\text{Im}(\varphi_\alpha)} A^\pm.
	\]

	The result is a new surface $\overline{F'} \subset S^3$ with boundary $L'$. In~\cite{stallings:articolo} it is proved that the pair $(\overline{F'},L')$ is fibered. The monodromy of this new pair may be chosen as $h 
	\circ D^\epsilon_\gamma$, where $h$ is the monodromy for $L$ on $\overline{F}$ and the identity on $\overline{F'} - \overline{F}$, $\gamma$ is the core circle of $A^\pm$ and $\epsilon = \pm 1$.
	Since the described operation is local, it can be easily extended to an arbitrary 3-manifold $M^3$ by restricting to a suitable coordinate chart.
	
	\item $(B)$ \textbf{Twisting}\\	
Let $(\overline{F},L)$ be a fibered pair in $M^3$ and denote by $\overline{F_1}$, $\overline{F_2}$ two parallel copies of $\overline{F}$. Let $\gamma_i \subset \overline{F_i}$ be embedded circles and put $\epsilon_i= \pm 1$. We require:
\begin{enumerate}
	\item There is an oriented annulus $A$ embedded in $M$ with $\partial A = \gamma_1 \cup \gamma_2$.
	\item Putting $\gamma_i^+$ for the oriented circle obtained by pushing off $\gamma_i$ along $\overline{F_i}$ with one extra $\epsilon_i$ full twist, $\gamma_1^+$ and $\gamma_2^+$, must intersect $A$ algebraically one point each with opposite signs.
\end{enumerate}

Surgery on $\gamma_1$ and $\gamma_2$ with framings determined by $\gamma_1^+$ and $\gamma_2^+$ will return $M^3$ and construct another fibered pair $(\overline{F'},L')$.
\end{description}
We are ready to report this result by Harer. 

\begin{teor}[{\upshape ~\cite[Theorem 2]{harer:articolo}}]
Let $(\overline{F},L)$ and $(\overline{F'},L')$ be fibered pairs in $M^3$. Then, there are pairs $(\overline{F_1},L_1)$ and $(\overline{F_2},L_2)$ such that
\begin{enumerate}
	\item $(\overline{F_2},L_2)$ is obtained from $(\overline{F},L)$ using operation $(A)$,
	\item $ (\overline{F_1},L_1)$ is constructed from $(\overline{F'},L')$ using $(A)$,
	\item $(\overline{F_1},L_1)$ may be changed into $(\overline{F_2},L_2)$ using $(B)$.
\end{enumerate}
\end{teor}

\subsection{Open book decomposition}
A fibered link $L$ in $M^3$ can be thought of as the binding of a book whose pages are the fibers of the bundle map. This intuition is formalized by the following definition.

\begin{deft}\label{obdefinition}
An \textit{open book decomposition} of $M^3$ is a pair $(B,\pi)$ where
\begin{enumerate}
	\item $B$ is an oriented link in $M^3$ called \textit{binding} of the open book,
	\item $\pi: M^3 \setminus B \rightarrow S^1$ is a locally trivial bundle such that $\pi^{-1}(\theta)$ is the interior of a surface $\Sigma_\theta$ and $\partial \Sigma_\theta=B$ for all 
	$\theta \in S^1$. The surface $\Sigma = \Sigma_\theta$ is called the \textit{page} of the open book.
\end{enumerate}
\end{deft}

Again, we will refer to the monodromy of the decomposition $(B,\pi)$ intending the monodromy of the associated bundle $\pi: M^3 \setminus B \rightarrow S^1$. The previous definition is sometimes called non-abstract open book decomposition in order to distinguish it from the notion of abstract open book decomposition.

\begin{deft}
An \textit{abstract open book decomposition} is a pair $(\Sigma,h)$ where
\begin{enumerate}
	\item $\Sigma$ is an oriented compact surface with boundary,
	\item $\phi: \Sigma \rightarrow \Sigma$ is a diffeomorphism such that $\phi$ is the identity in a neighborhood of $\partial \Sigma$.
\end{enumerate}
\end{deft}

From now until the end of the section we refer to~\cite{etnyre:articolo} for details. First of all we observe that given an abstract open book $(\Sigma,h)$ we get a 3-manifold $M^3_h$ as follows:

\[
M^3_{(\Sigma,h)}:=T_h \cup_\phi (\bigsqcup_{|\partial \Sigma|} S^1 \times D^2)
\]
where $|\partial \Sigma|$ denotes the number of boundary components of $\Sigma$ and $T_h$ is the mapping torus of $h$. Finally, $\cup_\phi$ means that the diffeomorphism $\phi$ is used to identify the boundaries of the two manifolds. For each boundary component $\gamma$ of $\Sigma$ the map $\phi: \partial (S^1 \times D^2) \rightarrow \gamma \times S^1$ is defined to be the unique diffeomorphism that takes $S^1 \times \{ p\}$ to $\gamma$ where $p \in \partial D^2$ and $\{q\} \times \partial D^2$ to $(\{q'\} \times [0,2\pi]/ \sim) \cong S^1$, where $q \in S^1$ and $q' \in \partial \Sigma$. We denote by $B_h$ the cores of the solid tori $S^1 \times D^2$ in the definition of $M_h$.\\
Two abstract open book decomposition $(\Sigma_1,\phi_1)$ and $(\Sigma_2,\phi_2)$ are said to be \mbox{\textit{equivalent}} if there is a diffeomorphism $F: \Sigma_1 \rightarrow \Sigma_2$ such that \mbox{$F \circ \phi_2 = \phi_1 \circ F$}.
\\
To indicate an open book decomposition of a manifold $M^3$ we will refer to the abstract open book decomposition $(\Sigma,h)$, where $\Sigma$ is the page and $h$ is the \mbox{monodromy}, instead of using $(L,\pi)$. The relation between the two pairs is given by the following lemma.

\begin{lem}\label{abstract}
We have the following basic facts about abstract and non-abstract open book decompositions:
\begin{enumerate}
	 \item An open book decomposition $(L,\pi)$ of $M^3$ gives an abstract open book $(\Sigma_\pi,h_\pi)$ such that $(M^3_{h_\pi},B_{h_\pi})$ is diffeomorphic to $(M^3,B)$.
	\item An abstract open book determines $M^3_h$ and an open book $(B_h,\pi_h)$ up to diffeomorphism.
	\item Equivalent open books give diffeomorphic 3-manifolds.
\end{enumerate}
\end{lem}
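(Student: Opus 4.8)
The plan is to establish the dictionary between the bundle description $(B,\pi)$ of Definition~\ref{obdefinition} and the mapping-torus-plus-solid-tori description $(\Sigma,h)$, treating the three statements in turn.

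For statement (1), I would start from an open book $(B,\pi)$ of $M^3$ and cut out a tubular neighborhood of the binding. Since an open book binding is in particular a fibered link, Definition~\ref{fibered} provides trivializing homeomorphisms identifying a neighborhood $\nu(B)$ with $\bigsqcup S^1\times D^2$ so that, on $S^1\times(D^2\setminus\{0\})$, the bundle map $\pi$ becomes $(x,y)\mapsto y/|y|$. Passing to a slightly smaller concentric neighborhood and setting $N:=M^3\setminus\text{int}\,\nu(B)$, the restriction $\pi|_N:N\to S^1$ is a locally trivial bundle whose fiber $\Sigma_\pi$ is a compact surface with boundary, with $\partial\Sigma_\pi$ a push-off of $B$ lying on $\partial N$, and which near $\partial N$ is the trivial product $\partial\Sigma_\pi\times S^1\to S^1$. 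Next I would pick a vector field on $N$ transverse to the fibers, tangent to $\partial N$, and equal to the $S^1$-direction in this product collar; its time-$2\pi$ return map on a fixed fiber is a diffeomorphism $h_\pi:\Sigma_\pi\to\Sigma_\pi$ which is the identity near $\partial\Sigma_\pi$ and which identifies $N$ with the mapping torus $T_{h_\pi}$. Finally, reattaching $\nu(B)$ along the gluing induced by these charts shows that the meridians of the solid tori go to the mapping-torus circles and their longitudes to the boundary circles of the page --- exactly the gluing used to build $M^3_{h_\pi}$ --- so $(M^3_{h_\pi},B_{h_\pi})$ is diffeomorphic to $(M^3,B)$, with the cores of the solid tori corresponding to $B$.

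For statement (2), the manifold $M^3_{(\Sigma,h)}$ and the pair $(B_h,\pi_h)$ have already been written down before the lemma, so the point is that no genuine choices enter once $(\Sigma,h)$ is fixed: $T_h$ is determined by $(\Sigma,h)$, and the gluing diffeomorphism $\phi$ of each $\partial(S^1\times D^2)$ onto the corresponding boundary torus of $T_h$ is pinned down up to isotopy by its two prescribed properties (longitude onto a boundary component of $\Sigma$, meridian onto a mapping-torus circle), whence $M^3_{(\Sigma,h)}$ is well defined up to diffeomorphism. For the non-abstract open book one sets $\pi_h$ to be the mapping-torus projection $T_h\to([0,2\pi]/{\sim})\cong S^1$ on the $T_h$-part and extends it over each $S^1\times(D^2\setminus\{0\})$ by the $D^2$-angular coordinate pulled back through $\phi$; a direct check shows $\pi_h$ is a locally trivial bundle whose page closes up to $\Sigma$ and whose binding is $B_h$, so $(B_h,\pi_h)$ is an open book in the sense of Definition~\ref{obdefinition}. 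For statement (3), if $F:\Sigma_1\to\Sigma_2$ is the diffeomorphism intertwining the monodromies $\phi_1$ and $\phi_2$ as in the definition of equivalence, then $F\times\text{id}_{[0,2\pi]}$ respects the identifications defining the two mapping tori and descends to a diffeomorphism $T_{\phi_1}\to T_{\phi_2}$; since $F$ restricts to a diffeomorphism of the boundary surfaces, it carries product collars to product collars and extends by the identity on the $D^2$-factors to a diffeomorphism $M^3_{(\Sigma_1,\phi_1)}\to M^3_{(\Sigma_2,\phi_2)}$ taking $B_{\phi_1}$ to $B_{\phi_2}$.

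The step I expect to be the main obstacle is the assertion in (1) that the monodromy may be taken to be literally the identity near the boundary: this is precisely what forces one to feed in the trivialization-near-the-binding clause of Definition~\ref{fibered} and to choose the transverse vector field compatibly with the resulting product structure on a collar of $\partial N$. Once the bundle has been put in product form near $\partial N$, the rest is the standard mapping-torus bookkeeping, and all of it may be carried out smoothly thanks to the equivalence of \textit{Top} and \textit{Diff} in dimension three recalled at the start of Section~2.
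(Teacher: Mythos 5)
Your argument is correct, but note that the paper itself gives no proof of this lemma: it is quoted verbatim from Etnyre's lecture notes, to which the authors defer for details. What you have written is essentially the standard proof found there --- cutting along $\partial\nu(B)$, using the normal form of $\pi$ near the binding to make the monodromy the identity on a collar, identifying the complement with a mapping torus via a transverse vector field, and checking that the regluing sends meridians and longitudes as in the construction of $M^3_{(\Sigma,h)}$ --- and you correctly isolate the one genuinely non-trivial point, namely that the trivialization near the binding is what allows the return map to be taken to be the identity near $\partial\Sigma_\pi$.
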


The basic difference between abstract and not-abstract open book decompositions is that when discussing not-abstract decompositions we can refer to the binding and to pages up to isotopy in $M^3$, whereas when discussing abstract decompositions we discuss them up to diffeomorphism.

\begin{deft}
Given two abstract open book decompositions $(\Sigma_i, h_i)$ with $i=0,1$, let $c_i$ be an arc properly embedded in $\Sigma_i$ and $R_i$ a rectangular neighborhood of $c_i$, i.e. $R_i= c_i \times [-1,1]$. The $\textit{Murasugi sum}$ of $(\Sigma_0,h_0)$ and $(\Sigma_1,h_1)$ is the abstract open book $(\Sigma_0,h_0) \ast (\Sigma_1,h_1)$ with page
\[
\Sigma_0 \ast \Sigma_1 := \Sigma_0 \cup_{R_0=R_1} \Sigma_1,
\]
where $R_0$ and $R_1$ are identified in such a way that $c_i \times \{ -1, 1\} = (\partial c_{i+1}) \times [-1,1]$, and the monodromy is $h_0 \circ h_1$.
\end{deft}

There is a precise relation between the manifolds $M^3_{(\Sigma_0,h_0)}$, $M^3_{(\Sigma_1,h_1)}$ and the one determined by the Murasugi sum $(\Sigma_0,h_0) \ast (\Sigma_1,h_1)$. This relation is reported in the following proposition.

\begin{prop}[{\upshape ~\cite{gabai:articolo}}]\label{msum}
The following diffeomorphism holds
\[
M^3_{(\Sigma_0,h_0)} \# M^3_{(\Sigma_1,h_1)} \cong M^3_{(\Sigma_0,h_0) \ast (\Sigma_1,h_1)}
\]
where $\#$ indicates the connected sum.
\end{prop}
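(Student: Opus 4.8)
The plan is to realize $M:=M^3_{(\Sigma_0,h_0)\ast(\Sigma_1,h_1)}$ as a connected sum by producing inside it an embedded $2$-sphere $S$ that separates $M$ into two pieces, each of which is a copy of $M^3_{(\Sigma_i,h_i)}$ with an open $3$-ball removed. Since the oriented connected sum of closed oriented $3$-manifolds does not depend on the orientation-reversing diffeomorphism used to glue the two boundary spheres (all such diffeomorphisms of $S^2$ being isotopic), this immediately yields $M\cong M^3_{(\Sigma_0,h_0)}\,\#\,M^3_{(\Sigma_1,h_1)}$. So the whole content is the construction of $S$ and the identification of its two sides.

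\paragraph{Unpacking the sum.}
First I would make the data explicit. Writing $\Sigma:=\Sigma_0\ast\Sigma_1=\Sigma_0\cup_R\Sigma_1$, the rectangle $R=c_0\times[-1,1]=c_1\times[-1,1]$ sits in the interior of $\Sigma$: the edges $c_0\times\{\pm1\}$ are glued to the boundary arcs $\partial c_1\times[-1,1]$ and the edges $c_1\times\{\pm1\}$ to $\partial c_0\times[-1,1]$, so all four edges become interior arcs, and $\Sigma\setminus\text{int}\,R=\Sigma_0'\sqcup\Sigma_1'$ with $\Sigma_i':=\overline{\Sigma_i\setminus R}$ attached to $R$ along two opposite edges each, the two pairs alternating around $\partial R$. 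The monodromy $\phi=h_0\circ h_1$ is obtained by extending each $h_i$ by the identity over $\Sigma_{1-i}'$; this is well defined because $h_i$ is already the identity near $\partial\Sigma_i$, and in particular $h_1$ is the identity on all of $\Sigma_0'$ and $h_0$ is the identity on all of $\Sigma_1'$, so the supports of $h_0$ and $h_1$ can interact only inside $R$.

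\paragraph{The splitting sphere.}
For the sphere I would argue that $M$ is cut open by the boundary of a regular neighbourhood of the \emph{plumbing block}. In $M^3_{(\Sigma_i,h_i)}$ pick a page and let $B_i$ be a small regular neighbourhood of the rectangle $R_i$ lying inside that page, enlarged so as to absorb the two short sub-arcs of the binding solid tori that $R_i$ meets along $\partial c_i\times[-1,1]$; choosing the page so that $B_i$ lies in a product region of the mapping torus, $B_i$ is a genuine $3$-ball and the open book is a product near $\partial B_i\cong S^2$. The page divides $\partial B_i$ into the two parallel push-offs of $R_i$ and an equatorial band swept out by $\partial R_i$ together with the adjacent binding. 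Gluing $\partial B_0$ to $\partial B_1$ by the orientation-reversing diffeomorphism that matches the two push-off rectangles of one sphere with the equatorial band of the other — that is, with exactly the $90^{\circ}$ identification by which $R_0$ and $R_1$ are glued in the Murasugi sum — should reconstruct $(\Sigma,\phi)$ page by page: over the glued region the pages $\Sigma_0$ and $\Sigma_1$ are joined along $R$, while away from it one sees the mapping tori of $h_1$ on $\Sigma_1'$ and of $h_0$ on $\Sigma_0'$ undisturbed, precisely because $h_0$ (resp.\ $h_1$) is the identity there. Hence $M\cong\bigl(M^3_{(\Sigma_0,h_0)}\setminus\text{int}\,B_0\bigr)\cup_S\bigl(M^3_{(\Sigma_1,h_1)}\setminus\text{int}\,B_1\bigr)=M^3_{(\Sigma_0,h_0)}\,\#\,M^3_{(\Sigma_1,h_1)}$.

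\paragraph{The main obstacle.}
The delicate point is the verification in the previous paragraph: one must check carefully that the identification of boundary spheres producing the combinatorial plumbing $(\Sigma_0\ast\Sigma_1,\,h_0\circ h_1)$ is compatible with a bona fide connected-sum gluing, and in particular that the two complementary regions of $S$ really are the punctured $M^3_{(\Sigma_i,h_i)}$. The subtlety is that $h_0$ and $h_1$ need not preserve the rectangle $R$, so the slicing is not simply a product in the $R$-direction and one has to control how the monodromy wraps $R$ around as one traverses the mapping torus; the fact that $R_i$ abuts the binding is handled by the enlargement of $B_i$ described above. This is exactly the geometric content of the plumbing theorem, and the detailed verification is the one carried out by Gabai in~\cite{gabai:articolo}, to which the argument ultimately reduces.
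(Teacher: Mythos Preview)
The paper does not actually prove this proposition: it is stated with a citation to Gabai~\cite{gabai:articolo} and no argument is given. So there is no ``paper's own proof'' to compare against.

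Your sketch is a faithful outline of Gabai's argument: find a separating $2$-sphere in $M^3_{(\Sigma_0,h_0)\ast(\Sigma_1,h_1)}$ arising as the boundary of a $3$-ball neighbourhood of the plumbing rectangle, and identify the two complementary pieces with the punctured $M^3_{(\Sigma_i,h_i)}$. Two remarks. First, in Gabai's construction the ball $B_i$ is not a full bicollar of $R_i$ but a \emph{one-sided} thickening: one takes, say, $R_0\times[0,\tfrac12]$ on the positive side of the chosen page in $M^3_{(\Sigma_0,h_0)}$ and $R_1\times[\tfrac12,1]$ on the negative side in $M^3_{(\Sigma_1,h_1)}$ (together with the adjacent binding arcs). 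This asymmetry is exactly what makes the glued pages assemble to $\Sigma_0\cup_R\Sigma_1$ and the glued monodromy become the composition $h_0\circ h_1$ rather than some commutator; your description of $B_i$ as a ``small regular neighbourhood'' obscures this point. Second, you correctly flag the verification that the two sides of $S$ are the punctured summands as the crux, and you defer it to~\cite{gabai:articolo}; since the paper itself does exactly the same (and less), your proposal is at least as complete as what the paper offers.
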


\begin{deft}
A \textit{positive} (respectively \textit{negative}) \textit{stabilization} of an abstract open book decomposition $(\Sigma, h)$ is the open book decomposition with
\begin{enumerate}
	\item page $S_\pm\Sigma:=\Sigma \cup B$, where $B=[-1,1] \times [0,1]$ is a 1-handle, whose sides corresponding to $ \{ \pm 1 \} \times [0,1]$, are attached to the boundary of 			$\Sigma$ , and
	\item monodromy $S_\pm h:= h \circ D^{\pm}_c$, where $D_c$ is a positive (respectively negative) Dehn twist along a curve c which lies in $S\Sigma$ and intersects $\{ 0 \} \times [0,1]$ in B exactly once.
\end{enumerate}

We denote this stabilization $S_\pm(\Sigma,h)=(S_\pm\Sigma,S_\pm h)$, where $\pm$ refers to the positivity or negativity of the stabilization.
\end{deft}

The stabilization has a clear interpretation in terms of the Murasugi sum.

\begin{prop}\label{stable}
If we consider the abstract open book decomposition associated to the Example \ref{hband} and we indicate it with $(H^\pm, D_\gamma^\pm)$, we have
\[
S_\pm(\Sigma,h)=(\Sigma,h) \ast (H^\pm, D_\gamma^\pm).
\]

In particular, by Proposition \ref{msum}, it results $M^3_{S_\pm(\Sigma,h)}=M^3_{(\Sigma,h)}$.
\end{prop}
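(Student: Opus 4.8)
The plan is to realize the positive (respectively negative) stabilization of $(\Sigma,h)$ literally as a Murasugi sum with the Hopf band open book of Example~\ref{hband}, and then to deduce the statement about $M^3$ from Proposition~\ref{msum}. The first step is to fix the arcs along which the Murasugi sum is performed. In $\Sigma$ take $c_0$ to be a properly embedded arc; concretely, choose $c_0 = c \cap \Sigma$, the intersection of the stabilization curve $c$ with the page, so that $c_0$ has its endpoints on $\partial\Sigma$. In the Hopf band $A^\pm \cong S^1\times[0,1]$ take $c_1=\beta$ to be a cocore, i.e. a spanning arc $\{\ast\}\times[0,1]$ joining the two boundary circles. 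Let $R_i=c_i\times[-1,1]$ be rectangular neighborhoods as in the definition of the Murasugi sum.

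Next I would identify the page of $(\Sigma,h)\ast(H^\pm,D_\gamma^\pm)$. Cutting the annulus $A^\pm$ along the cocore $\beta$ produces a rectangle, so $A^\pm$ is obtained from the square $R_1$ by attaching a single $1$-handle. Tracing through the gluing $\Sigma\ast A^\pm=\Sigma\cup_{R_0=R_1}A^\pm$ — in which the two interior sides $c_0\times\{\pm1\}$ of $R_0$ are identified with the two boundary sides $(\partial c_1)\times[-1,1]$ of $R_1$, and conversely — one sees that $\Sigma\ast A^\pm$ is exactly $\Sigma$ with a $1$-handle attached along two small arcs of $\partial\Sigma$ near the endpoints of $c_0$. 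This is precisely the stabilized page $S_\pm\Sigma=\Sigma\cup B$.

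Then I would match the monodromies. By definition the Murasugi sum monodromy is $h\circ D_\gamma^\pm$, where $D_\gamma^\pm$ is the Dehn twist along the core circle $\gamma$ of $A^\pm$, extended by the identity over the rest of $\Sigma\ast A^\pm$. Under the identification of the previous step, $\gamma$ runs once through the cocore of the attached $1$-handle $B$ and closes up through a neighborhood of $c_0$ in $\Sigma$; hence $\gamma$ is a curve lying in $S_\pm\Sigma$ that meets the cocore $\{0\}\times[0,1]$ of $B$ transversally in exactly one point, so it is isotopic to the stabilization curve $c$, and the sign of the Hopf band fixes the sign of the twist. Therefore $h\circ D_\gamma^\pm=h\circ D_c^\pm=S_\pm h$, which gives $(\Sigma,h)\ast(H^\pm,D_\gamma^\pm)=(S_\pm\Sigma,S_\pm h)=S_\pm(\Sigma,h)$. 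For the final assertion, recall from Example~\ref{hband} that $(H^\pm,D_\gamma^\pm)$ is an open book decomposition of $S^3$, so Proposition~\ref{msum} yields
\[
M^3_{S_\pm(\Sigma,h)}=M^3_{(\Sigma,h)\ast(H^\pm,D_\gamma^\pm)}\cong M^3_{(\Sigma,h)}\,\#\,M^3_{(H^\pm,D_\gamma^\pm)}=M^3_{(\Sigma,h)}\,\#\,S^3\cong M^3_{(\Sigma,h)}.
\]

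The main obstacle I expect is the bookkeeping in the two identifications above: verifying that the Murasugi sum gluing of a square neighborhood of a cocore of the annulus genuinely reproduces a $1$-handle attachment, and, more delicately, checking that the core circle of the Hopf band is carried to an admissible stabilization curve with the correct sign. Both amount to local pictures, but they require keeping careful track of orientations, so that the positive Hopf band produces a positive Dehn twist rather than its inverse.
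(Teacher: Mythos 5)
The paper states Proposition~\ref{stable} without proof (it is quoted as a standard fact, essentially from Etnyre's lecture notes), so there is no argument of the authors' to compare against; your proposal supplies the standard argument, and it is correct. The three steps are exactly what is needed: performing the Murasugi sum along a cocore of the Hopf annulus $A^\pm$ reattaches $A^\pm\setminus R_1$ as a $1$-handle $B$ to $\partial\Sigma$, which recovers the stabilized page $S_\pm\Sigma=\Sigma\cup B$; the core circle $\gamma$ of $A^\pm$ becomes a curve meeting the cocore $\{0\}\times[0,1]$ of $B$ once, so the Murasugi-sum monodromy $h\circ D_\gamma^\pm$ is a stabilization monodromy $h\circ D_c^\pm$; and the final isomorphism follows from Proposition~\ref{msum} together with $M^3_{(H^\pm,D_\gamma^\pm)}\cong S^3$ from Example~\ref{hband}.

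Two small points are worth making explicit. First, a stabilization depends on the choice of the curve $c$ (equivalently on the arc $c\cap\Sigma$), so the clean statement is that for \emph{each} choice of $c$ the stabilization $S_\pm(\Sigma,h)$ equals the Murasugi sum taken along the arc $c_0=c\cap\Sigma$, and conversely every Murasugi sum with $(H^\pm,D_\gamma^\pm)$ along a properly embedded arc arises this way; your choice $c_0=c\cap\Sigma$ handles this correctly, but the equality in the proposition is really a family of equalities indexed by $c_0$. Second, the sign check you defer is genuinely the only delicate point: with the orientation conventions of Example~\ref{hband} the positive Hopf band has monodromy the \emph{positive} Dehn twist, so citing that example closes the gap; with the opposite orientation convention on the page the signs would flip. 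Neither point affects the validity of your argument.
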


\subsection{Contact structures}

So far we have described only topological notions. Now, we are going to change the setting of our analysis introducing the differential viewpoint. We refer to~\cite{ozbagci:libro}.

%definizione di forma di contatto 
\begin{deft}
A 1-form $\alpha \in \mathcal{A}^1(M^3)$ is said to be a \textit{positive contact form} if $\alpha \wedge d\alpha >  0$. The $2$-distribution $\xi \subset TM^3$ is an \textit{oriented contact structure} if locally it can be defined by a positive contact 1-form as $\xi = \ker \alpha$.
\end{deft}

\begin{oss}
There exists a more general way to define a contact 1-form on a 3-dimensional manifold $M^3$ by putting $\alpha \wedge d\alpha \neq 0$, but since we are interested only in positive contact forms, it suffices to define them. 
\end{oss}

According to a classical result of Frobenius, the distribution $\xi = \ker \alpha$ is integrable if and only if $\alpha \wedge d\alpha=0$. Since an integrable distribution has to be involutive, that is closed under Lie bracket, $\xi$ will be integrable if and only if $\alpha([X_1,X_2])=0$ whenever $X_i \in \ker \alpha, i=1,2$. Now, if we suppose $\xi$ integrable and we apply the Cartan formula to compute $d\alpha$, we obtain
\[
d\alpha(X_1,X_2)=\mathcal{L}_{X_2}(\alpha(X_1)) - \mathcal{L}_{X_1}(\alpha(X_2)) - \alpha([X_1,X_2])
\]
and, thanks to the hypothesis of integrability, we get $d\alpha(X_1,X_2)=0$ for all $X_1,X_2 \in \ker\alpha$. In particular, $d\alpha$ has to vanish on $\xi = \ker \alpha$. Therefore, the contact condition may be interpreted as a constraint which forces $\xi$ to be \textit{"maximally non-integrable"}.

\begin{es}
If on $\mathbb{R}^{3}$ we consider the euclidean coordinate system $(x,y,z)$, the standard contact form is given by $\alpha_{st} = dz + xdy$ and the associated contact structure is $\xi_{st}=\ker \alpha_{st}$.
\\
Adopting the cylindrical coordinates $(\rho,\theta,z)$ on $\mathbb{R}^3$, the form $\alpha = dz + \rho^2 d\theta$ is another example of contact 1-form. In fact, after the change into the classical $(x,y,z)$-coordinate system, we get $\alpha=dz+xdy-ydx$ and so
\[
\alpha \wedge d\alpha = 2 dx \wedge dy \wedge dz = 2 \rho d\rho \wedge d\theta \wedge dz
\]
which is clearly different from 0 all over the points where the coordinate change is defined. It is easy to verify that the contact plane of the associated distribution are generated by $\{ \frac{\partial}{\partial \rho}, \rho^2 \frac{\partial}{\partial z}-\frac{\partial}{\partial \theta} \}$. The planes relative to the points of the $z$-axis are horizontal, i.e. parallel to the $xy$-plane, and as we move out along any ray perpendicular to the $z$-axis the planes twist in a clockwise way. The twisting angle is an increasing function of $r$ which converges monotonically to $\pi/2$ as $r \rightarrow \infty$. Moreover, we observe that the distribution is invariant under translation along the $z$-axis and under rotation on the $xy$-plane.

%\begin{figure}[!h]
%	\centering
%		\includegraphics[width=8.5cm]{contactst}
%	\caption{Contact planes for the $\alpha=dz+\rho^2 d \theta$.}\label{contactst}
%\end{figure}
\end{es}

\begin{es}
If we indicate with $p=(x_1,y_1,x_2,y_2)$ a point in $\mathbb{R}^4$ and we consider the map $F: \mathbb{R}^4 \rightarrow \mathbb{R}$ defined by $F(x_1,y_1,x_2,y_2):=x_1^2+y_1^2+x_2^2+y_2^2$, it is clear that $S^3=F^{-1}(1)$ and $T_pS^3=\ker dF_p= \ker (2x_1dx_1 + 2y_1dy_1 + 2x_2dx_2 + 2y_2dy_2)$. By identifying $\mathbb{R}^4$ with $\mathbb{C}^2$, we can define a complex structure on each tangent space of $\mathbb{R}^4$. More precisely, by putting
\[
J_p : T_p \mathbb{R}^4 \rightarrow T_p \mathbb{R}^4, \hspace{10pt} \text{with $p \in \mathbb{R}^4$}
\]
\[
J_p(\frac{\partial}{\partial x_i}):=\frac{\partial}{\partial y_i}, \hspace{10pt} J_p(\frac{\partial}{\partial y_i}):=-\frac{\partial}{\partial x_i} \hspace{15pt} \text{for $i=1,2$}
\]
 we immediately see $J_p^2 = -\text{Id}$. Let $\xi$ be the distribution defined by
\[
\xi_p = T_pS^3 \cap J_p(T_pS^3), \hspace{10pt} \text{with $p \in S^3$}.
\]

We claim that $\xi$ is a contact structure on $S^3$. We want to find a contact 1-form $\alpha$ such that $\xi=\ker \alpha$. If we consider the 1-form $df \circ J$ and we evaluate this form on the basis $\{ \partial/\partial x_i, \partial/\partial y_i \}$, we can recognize that $$-df_p \circ J_p = 2x_1 dy_1 - 2y_1dx_1 + 2x_2dy_2-2y_2dx_2.$$
Moreover, thanks to the equality $J_p^2=-\text{Id}$, it is possible to verify that $J_p(T_pS^3)=\ker (-df_p \circ J_p)$. By letting $\alpha:= -1/2 (df \circ J)|_{S^3}$, it is clear that $\xi = \ker \alpha$. In order to verify that $\alpha \wedge d\alpha >0$, we can pick up a point $p=(x_1,y_1,x_2,y_2)$ with $x_1 \neq 0, y_1 \neq 0, y_2 \neq 0$ and choose a basis for the tangent space $T_p S^3$ given by 
\[
\{ \frac{\partial}{\partial x_1} - \frac{x_1}{y_1}\frac{\partial}{\partial y_1},\frac{\partial}{\partial x_2} - \frac{x_2}{y_2}\frac{\partial}{\partial y_2}, \frac{\partial}{\partial x_1} - \frac{x_1}{y_2}\frac{\partial}{\partial y_2} \}.
\]

On this particular basis, it easy to see that $\alpha \wedge d\alpha > 0$. Hence we conclude that $\alpha=x_1dy_1-y_1dx_1 + x_2dy_2 - y_2dx_2|_{S^3}$ is a contact form on the 3-sphere. We define $\xi_{st}:= \ker \alpha$ as the \textit{standard contact structure} on $S^3$. This structure can be read in polar coordinates as $\xi_{st}=\ker (\rho_1^2 d\theta_1 + \rho_2^2 \theta_2)$.
\end{es}

\begin{deft}
Two contact 3-manifolds $(M^3,\xi)$ and $(N^3,\xi')$ are called \textit{contactomorphic} if there exists a diffeomorphism $F:M^3 \rightarrow N^3$ such that $F_*(\xi)=(\xi')$, that is $dF_p(\xi_m) = \xi'_{F(m)}$ for all $m \in M^3$. If $\xi = \ker \alpha$ and $\xi'= \ker \alpha'$, the previous condition is equivalent to the existence of a nowhere vanishing function $f: M^3 \rightarrow \mathbb{R}$ such that $F^*\alpha'=f\cdot \alpha$. Two contact structures $\zeta$ and $\zeta'$ on the same manifold $M^3$ are said to be \textit{isotopic} if there is a contactomorphism $h:(M^3,\zeta) \rightarrow (M^3,\zeta')$ which is isotopic to the identity.  
\end{deft}

A remarkable result about contact 1-forms is Darboux's theorem, which describes completely their local behaviour (we refer to{\upshape ~\cite[Theorem 4.4.1.13]{ozbagci:libro}}).

\begin{teor}
Given a contact 3-manifold $(M^3,\xi)$, for every $m \in M$ there is a neighborhood $U \subset M^3$ such that $(U,\xi|_U)$ is contactomorphic to $(V,\xi_{st}|_V)$ for some open set $V \subset \mathbb{R}^3$.
\end{teor}

Therefore, every contact structure $\xi$ on a 3-manifold $M^3$ is locally contactomorphic to the standard contact structure on $\mathbb{R}^3$. Thus, given a point $m \in M^3$, we can find a suitable chart $(U,\varphi)$ such that $m \in U$ and the local expression of $\xi$ in the coordinate system $\varphi(p)$ is equal to $(x,y,z)$, where $p \in U$,  is given by $\xi_{st}=dz + xdy$.

\subsection{Equivalence between definitions}

We have studied separately the three concepts of fibered links, open book decomposition and contact structures introduced so far. The next step is to understand how these notions relate to each other. In order to do this we fix a closed orientable 3-manifold $M^3$ and we define the sets:

\begin{align*}
\mathcal{F}&:=\{ \text{fibered links in $M^3$ up to positive plumbing} \} \\
\mathcal{O}&:=\{ \text{open book decompositions of $M^3$ up to positive stabilization} \} \\
\mathcal{C}&:=\{ \text{oriented contact structures on $M^3$ up to isotopy} \}.
\end{align*}

We start trying to establish a one to one correspondence between $\mathcal{O}$ and $\mathcal{F}$. It follows by definition that the binding $L$ of an open book decomposition is a fibered link for $M$. Viceversa, given a fibered link in $M^3$, we have automatically an open book decomposition of $M^3$. Moreover, the plumbing operation on fibered links becomes the stabilization procedure in terms of open book decompositions. Thus, it remains defined a bijective map

\[
\Pi : \mathcal{O} \rightarrow \mathcal{F}, \hspace{10pt} \Pi(\mathfrak{ob}):=B_{\mathfrak{ob}}
\]
where $B_{\mathfrak{ob}}$ is the binding of the open book decomposition $\mathfrak{ob}$.\\
The bijection between $\mathcal{O}$ and $\mathcal{C}$ reveals to be less immediate than the previous one. In order to find the desired correspondence, we need to understand when an open book decomposition and a contact structure result compatible.

%definizione di struttura di contatto compatibile

\begin{deft}

A contact structure $\xi$ on $M^3$ is \textit{supported} by an open book decomposition $(B,\pi)$ of $M$ if $\xi$ can be isotoped through contact structures so that there is a contact 1-form $\alpha$ for $\xi$ such that
\begin{enumerate}

	\item $d\alpha$ is a positive volume form on each page $\Sigma$ of the open book and
	\item $\alpha >0$ when restricted to the binding $B$.

\end{enumerate}

Equivalently, we will say that the contact structure $\xi$ and the open book $(B,\pi)$ are \textit{compatible}.
\end{deft}

\begin{teor}[{\upshape ~\cite{thurston:articolo}}]\label{contex}

Every open book decomposition of a closed and oriented 3-manifold $M^3$ admits a compatible contact structure.

\end{teor}

\begin{prop}[{\upshape ~\cite[Proposition 2]{giroux:articolo}}]\label{uisotopy}

Any two contact structures compatible with a given open book decomposition are isotopic.

\end{prop}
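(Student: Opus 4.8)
The natural route is to deduce the statement from Gray's stability theorem. Let $\xi_0$ and $\xi_1$ be two contact structures on the closed manifold $M^3$, both supported by the fixed open book $(B,\pi)$. If one can produce a smooth one-parameter family $\{\xi_t\}_{t\in[0,1]}$ of contact structures joining $\xi_0$ to $\xi_1$, then, since $M^3$ is closed, Gray's theorem yields an ambient isotopy $\psi_t$ of $M^3$ with $\psi_0=\mathrm{id}_{M^3}$ and $(\psi_1)_\ast\xi_0=\xi_1$; as $\psi_1$ is isotopic to the identity, this is exactly the assertion that $\xi_0$ and $\xi_1$ are isotopic in the sense of the paper. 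So the whole problem reduces to joining $\xi_0$ and $\xi_1$ inside the class of contact structures compatible with $(B,\pi)$.

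The first step is a normalisation. Using the clause ``up to isotopy through contact structures'' in the definition of compatibility (and Gray's theorem once more, so that replacing $\xi_i$ by an isotopic structure is harmless) I may assume there are honest contact $1$-forms $\alpha_0,\alpha_1$ with $\ker\alpha_i=\xi_i$, such that $d\alpha_i$ restricts to a positive area form on every page and $\alpha_i|_B>0$. Since a neighbourhood of a binding component compatible with the open book has a standard model, after a further compatible isotopy I may also assume that $\alpha_0$ and $\alpha_1$ agree, on a tubular neighbourhood $\nu(B)$ of the binding, with one fixed Thurston--Winkelnkemper model $f(r)\,d\zeta+g(r)\,d\theta$.

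The key point on the complement $M^3\setminus\mathrm{int}\,\nu(B)$, where $\pi$ is an honest fibration over $S^1$, is the pointwise identity $\pi^\ast d\theta\wedge d\alpha=\bigl(d\pi(R_\alpha)\bigr)\,\alpha\wedge d\alpha$, valid for any contact form $\alpha$ (here $R_\alpha$ is the Reeb field), which shows that for a compatible form the $3$-form $\pi^\ast d\theta\wedge d\alpha$ is positive: indeed compatibility forces $R_\alpha$ to be positively transverse to the pages, i.e.\ $d\pi(R_\alpha)>0$. This positivity is \emph{convex} in $\alpha$, since $\pi^\ast d\theta$ is closed, and it is the leverage needed: for a large constant $K>0$ the form $\alpha+K\,\pi^\ast d\theta$ satisfies $(\alpha+K\pi^\ast d\theta)\wedge d(\alpha+K\pi^\ast d\theta)=\alpha\wedge d\alpha+K\,\pi^\ast d\theta\wedge d\alpha>0$ on the compact piece $M^3\setminus\mathrm{int}\,\nu(B)$. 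Putting $\alpha_t:=(1-t)\alpha_0+t\alpha_1+t(1-t)K\chi\,\pi^\ast d\theta$ with $\chi$ a cutoff equal to $1$ outside a slightly larger neighbourhood of $B$ and to $0$ on $\nu(B)$, one checks that for $K$ large $\alpha_t$ is a contact form for every $t$: near $B$ it equals $\alpha_0$ (hence is contact); in the middle range of $t$ the term $t(1-t)K\chi\,\pi^\ast d\theta\wedge d(\cdots)$ dominates the bounded remainder; near $t=0$ and $t=1$ one invokes continuity from the contact forms $\alpha_0$ and $\alpha_1$. Then $\xi_t:=\ker\alpha_t$ is the required path of compatible contact structures, and Gray's theorem concludes.

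\textbf{Main obstacle.} I expect the technical heart to be precisely the normalisation near the binding together with the cutoff/gluing in the last step: one must arrange both $\alpha_0$ and $\alpha_1$ to coincide with a single standard model on $\nu(B)$ while remaining inside the compatible class during the auxiliary isotopy, and then verify that the correction term $t(1-t)K\chi\,\pi^\ast d\theta$ keeps $\alpha_t$ contact uniformly in $t$ through the transition region of $\chi$ (where $d\chi\wedge\pi^\ast d\theta$ is no longer zero). By contrast, the Gray reduction at the start and the promotion of the path to a contactomorphism isotopic to the identity at the end are formal.
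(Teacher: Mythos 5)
The paper does not actually prove this proposition: it defers to Giroux and to Ozbagci--Stipsicz (``For a proof of both see \cite{ozbagci:libro}''). Your argument is, in outline, precisely the standard proof found in those references and in \cite{etnyre:articolo}: normalise both forms to a common model near the binding, use the convexity of the two compatibility conditions so that $\beta_t=(1-t)\alpha_0+t\alpha_1$ remains compatible (though not necessarily contact), restore the contact condition by adding a large multiple of a $1$-form equal to $\pi^*d\theta$ away from the binding, and conclude with Gray stability, which applies since $M^3$ is closed. Your pointwise identity $\pi^*d\theta\wedge d\alpha=\bigl(d\pi(R_\alpha)\bigr)\,\alpha\wedge d\alpha$ is correct and is the right way to see the positivity. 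Two remarks on the obstacle you flag, which is real. First, cutting $\pi^*d\theta$ off to zero near $B$ is the wrong move: the resulting term $t(1-t)K\,\beta_t\wedge d\chi\wedge\pi^*d\theta$ grows linearly in $K$ with no sign, and it spoils not only the middle range of $t$ but even the ``continuity near $t=0,1$'' step, since the $t$-interval on which $\beta_t\wedge d\beta_t$ dominates shrinks as $K\to\infty$. The standard fix is to replace $\chi\,\pi^*d\theta$ by the globally defined Thurston--Winkelnkemper $1$-form $\eta$ of \cite{thurston:articolo}, equal to $\pi^*d\theta$ outside $\nu(B)$ and to an explicit model $f(r)\,d\theta$ inside $\nu(B)$, chosen so that $\eta\wedge d\beta_t\ge 0$ everywhere and so that, on $\nu(B)$ where $\beta_t$ equals the fixed model form, $\beta_t+s\eta$ is contact for all $s\ge 0$. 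Second, once the correction term is non-negative everywhere, there is no circularity between the choice of $K$ and the endpoints: $\beta_t\wedge d\beta_t>0$ on $K$-independent neighbourhoods of $t=0$ and $t=1$, and $K$ is then chosen large enough for the complementary compact interval of $t$. With these adjustments your proof is complete and agrees with the one in the cited sources.
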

For a proof of both see ~\cite{ozbagci:libro}. These two statements suggest that we have a well defined map
\[
\Psi : \mathcal{O} \rightarrow \mathcal{C}, \hspace{10pt} \Psi(\mathfrak{ob}):=\xi_{\mathfrak{ob}}
\]
where $\mathfrak{ob}$ is an open book decomposition of $M^3$ and $\xi_\mathfrak{ob}$ is the associated contact structure of Theorem \ref{contex}. Giroux's Theorem states that the map $\Psi$ is invertible. More precisely, we have the following

\begin{teor}[{\upshape ~\cite[Theorem 4]{giroux:articolo}}]\label{giroux}

Two isotopic contact structures are supported by two open book decompositions which have a common positive stabilization. Equivalently it holds:

\begin{description} 
	\item A)  For a given open book decomposition of $M^3$ there exists a compatible contact structure $\xi$ on $M^3$. Moreover, contact structures compatible with the same open book decomposition are isotopic.
	\item B)  For a given contact structure $\xi$ on $M^3$ there is a compatible open book decomposition of $M^3$. Moreover two open book decompositions compatible with a fixed contact structure admit a common positive stabilization.
\end{description}

\end{teor}

To sum up, thanks to Theorem \ref{giroux}, we get the following sequence of bijections
\[
\mathcal{C} \leftrightarrow \mathcal{O} \leftrightarrow \mathcal{F}
\]
realized by the maps $\Pi$, $\Psi$ and their inverses. In particular, if we are interested in studying the open book decompositions of a 3-dimensional manifold, and so the fibered links, we can equivalently study the contact structures on the same manifold and try to classify the associated open book decompositions by the use of contact properties.

\section{Fibered knots in lens spaces}

The main goal of this section is to construct a mixed link diagram of some fibered links in lens spaces which are already known in literature in form of open book decomposition. The following lemma, which describes the effects of surgery on a fixed open book decomposition of a 3-manifold, will reveal to be essential for our purposes.

\begin{lem} [{\upshape ~\cite[Lemma 2.3.11]{onaran:tesi}}] \label{obdeffects}
Let $(\Sigma,h)$ be an open book decomposition for a closed orientable 3-manifold $M^3$.
\begin{description}
	\item 1) If $K$ is an unknotted circle in $M^3$ intersecting each page $\Sigma$ transversely once, then the result of a 0-surgery along $K$ is a new 3-manifold $M^3_0$ with an open book 	         decomposition having page $\Sigma' = \Sigma - \{ \text{open disk} \}$ (the disk is simply a neighborhood of the point of intersection of $\Sigma$ with $K$) and 
	         monodromy $h' = h|_{\Sigma'}$.
	\item 2) If $K$ is an unknotted circle sitting on the page $\Sigma$, then a $\pm 1$-surgery along $K$ gives back a new 3-manifold $M^3_1$ with open book decomposition having page
		$\Sigma$ with monodromy $h'=h \circ D_K^{\mp}$, where $D_K$ is the Dehn twist along $K$.
\end{description}

\end{lem}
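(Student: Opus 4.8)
\textit{Proof strategy.} The plan is to treat the two cases separately, passing to the associated abstract open book and realising each surgery as a regluing supported in a model neighbourhood of $K$, using the description $M^3_{(\Sigma,h)} = T_h \cup_\phi \big(\bigsqcup_{|\partial\Sigma|} S^1\times D^2\big)$ and the dictionary of Lemma \ref{abstract}. Here and below the surgery coefficients $0$ and $\pm 1$ are taken with respect to the framing of $K$ induced by a page (the \emph{page framing}); when $K$ bounds a disc in $M^3$, in particular in the applications where $M^3 = S^3$, this agrees with the usual $0$-framing.

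\smallskip
\noindent\textit{Part (1).} Since $K$ meets each page transversely in a single point, $\pi|_K\colon K\to S^1$ has no critical points and is one-to-one, hence a diffeomorphism; therefore $K$ admits a tubular neighbourhood $V=\nu(K)\cong S^1\times D^2$ in which $K=S^1\times\{0\}$ and the pages meet $V$ exactly in the meridian discs $\{\theta\}\times D^2$. After a routine isotopy of the open book we may assume $h$ maps the disc $D=\Sigma\cap V$ to itself, so $\Sigma':=\Sigma\setminus\operatorname{int}(D)$ is invariant and $h'=h|_{\Sigma'}$ makes sense. I would then observe that, with its induced structure, $M^3\setminus\operatorname{int}(V)$ is obtained from the mapping torus $T_{h'}$ by gluing in solid tori along the $|\partial\Sigma|$ ``old'' boundary circles of $\Sigma'$; that is, it is diffeomorphic to $M^3_{(\Sigma',h')}$ with an open neighbourhood of the new binding circle $B_{\mathrm{new}}$ (the one produced by deleting $D$ from every page) removed, via a diffeomorphism of $\partial V$ carrying the page-framing longitude of $K$ to the meridian of $B_{\mathrm{new}}$ (and the meridian of $K$ to a longitude of $B_{\mathrm{new}}$). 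Under this identification the $0$-surgery on $K$ — which reglues a solid torus sending its meridian to the page-framing longitude of $K$ — becomes the standard refilling of $\nu(B_{\mathrm{new}})$, so $M^3_0\cong M^3_{(\Sigma',h')}$ with $B_{\mathrm{new}}$ a genuine additional binding component, which is the asserted open book.

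\smallskip
\noindent\textit{Part (2).} Here I would place $K$ in the interior of the page $\Sigma\times\{\theta_0\}\subset T_h\subset M^3$, disjoint from the binding, and fix an annular neighbourhood $A$ of $K$ in that page and a neighbourhood $N=A\times(\theta_0-\varepsilon,\theta_0+\varepsilon)\cong S^1\times D^2$ of $K$ in $T_h$. The mapping torus $T_{h\circ D_K^{\mp}}$ is obtained from $T_h$ by cutting along the page $\Sigma\times\{\theta_0\}$ and regluing by $D_K^{\mp}$, an operation supported in $N$; by the classical correspondence between Dehn twists and Dehn surgeries, inserting $D_K^{\mp}$ in this way is exactly $\pm 1$-surgery on $K$ with respect to the page framing $\partial A$, the precise sign being pinned down by a local model. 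Since the modification is confined to $T_h$ and does not touch the solid tori glued on to form $M^3$, the binding $B$ is unchanged, and one concludes $M^3_1\cong M^3_{(\Sigma,\,h\circ D_K^{\mp})}$, the open book with the same page $\Sigma$, the same binding $B$, and monodromy $h'=h\circ D_K^{\mp}$.

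\smallskip
\noindent\textit{Main obstacle.} The topology is light — each surgery is just a regluing in an adapted chart — and the real work is the bookkeeping on $\partial\nu(K)$: identifying which slopes are meridians and which longitudes, checking that ``$0$'' and ``$\pm 1$'' genuinely refer to the page framing, and, in Part (2), fixing the sign in the Dehn-twist/Dehn-surgery dictionary from the local model so that $D_K^{\mp}$ matches $\pm 1$-surgery. I expect this sign and framing bookkeeping, rather than any global argument, to be the crux.
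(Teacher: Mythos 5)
Your proposal is correct and, for part (1), rests on the same idea as the paper's proof: realise the $0$-surgery as a regluing supported in a tubular neighbourhood of $K$ in which the pages appear as meridian discs, and check that the reglued solid torus turns the puncture boundary into a new binding component with the restricted monodromy. The difference is one of generality and completeness. The paper only verifies the model case, namely the $0$-surgery transverse to the standard open book $(D^2,\mathrm{Id})$ of $S^3$ (following Kuwamuro--Pavelescu), explicitly tracking the slopes $\mu\mapsto\lambda$, and then asserts that the general $(\Sigma,h)$ case follows because the operation is local; you instead run the argument directly for a general page and monodromy via the mapping-torus description $T_h\cup_\phi(\bigsqcup S^1\times D^2)$, which is cleaner and makes the ``extension to the general case'' step explicit rather than asserted. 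For part (2) the paper gives no argument at all (it defers to Onaran's thesis), whereas your cut-and-reglue-along-a-page argument, invoking the Dehn twist/Dehn surgery correspondence with respect to the page framing, is the standard proof and fills that gap. Two small points of care: your blanket statement that the coefficients are ``taken with respect to the page framing'' only literally makes sense in part (2) — in part (1) the curve $K$ is transverse to the pages and the relevant $0$-framing is the one coming from $K$ being unknotted (the paper's $\mu\mapsto\lambda$), as your own parenthetical concedes; and the sign in part (2) ($\pm1$-surgery giving $D_K^{\mp}$) does need the local-model check you flag, but that is exactly the expected bookkeeping and not a gap in the argument.
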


\begin{proof}
We will give a sketch of the proof only for $1)$ by following Claim 2.1 of~\cite{pavelescu:articolo}.\\
Let us consider the standard open book decomposition of $S^3$ given by $(D^2,\text{Id})$ that has binding $\partial D^2 = U$ and call $U'$ the 0-framed knot. Since $U'$ has to be transverse to each page in order to satisfy the hypothesis, we can think of it as a line orthogonal to the disk $D^2$ to which we add a point at infinity (see Figure~\ref{0surgery1}$(1)$).

\begin{figure}[!h]  
	\centering
		\includegraphics[width=13cm]{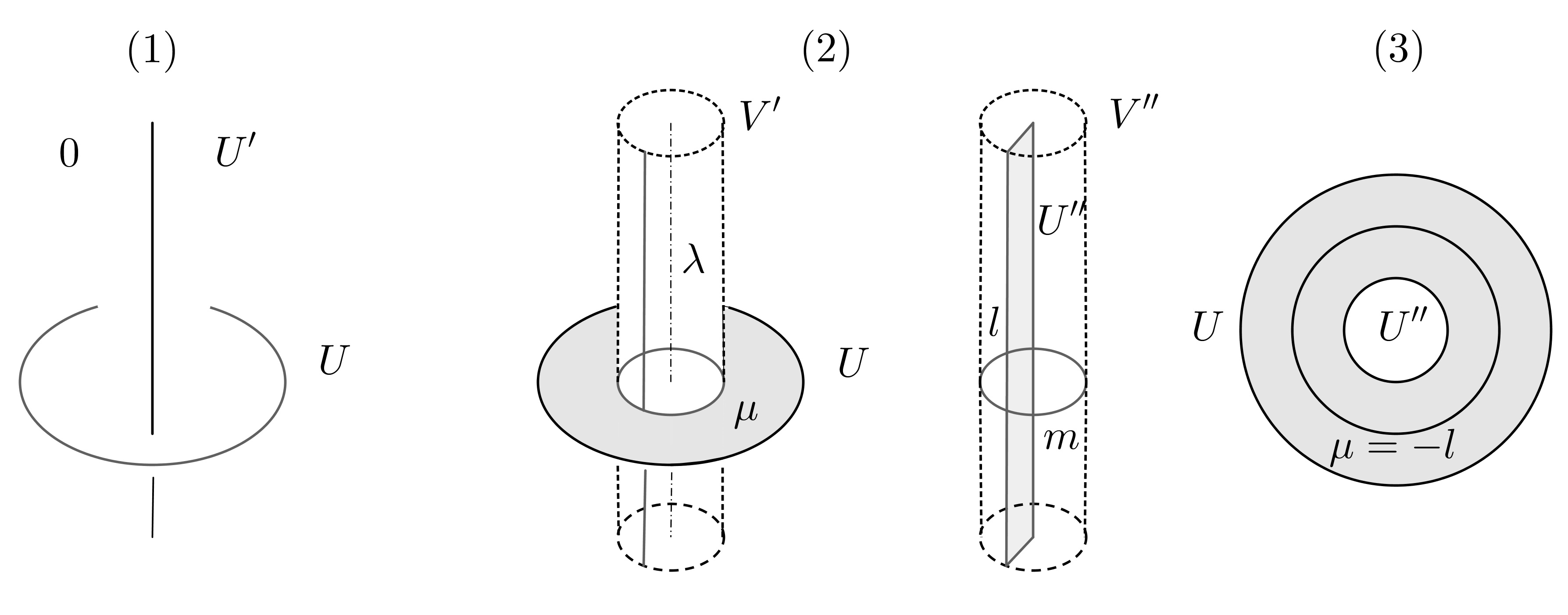}
	\caption{0-surgery on $D^2$ in $S^3$.}  \label{0surgery1}
\end{figure}

To perform a 0-surgery along $U'$ we take a thickened neighborhood $V'$ of $U'$ on which lie the meridian $\mu$ and the longitude $\lambda$ relative to $U'$, we remove $V'$ and we glue it back by sending $\mu \mapsto \lambda$. This procedure is completely equivalent to taking a new solid torus $V''$ with meridian $m$ and longitude $l$ and gluing it in $S^3$ instead of $V'$ using a homeomorphism of the boundaries which sends $m\mapsto \lambda$ and $l \mapsto -\mu$.
The open book decomposition of the resulting manifold, $S^1 \times S^2$, will have the annulus bounded by $U$ and $U''$ as page (see Figure~\ref{0surgery1}$(3)$) and the identity map as monodromy. Thus $1)$ is proved in the particular case of a 0-surgery transverse to $(D^2,\text{Id})$ in $S^3$.\\
Since the previous procedure can be easily extended to the more general case with page $\Sigma$ for a 3-manifold $M^3$, we have concluded.
\end{proof}

We will try to visualize how the open book decomposition $(D^2, \text{Id})$ of $S^3$ is modified by the 0-surgery along $U'$. Let us recall the model presented in Example \ref{trivial} thanks to which we think of $S^3$ as generated by the rotation of a 2-sphere around the circle $l \cup \{\infty\}$. Suppose to identify $U'$ with $l \cup \{ \infty \}$.\\
By removing a neighborhood $V'$ of $U'$, we puncture each page of the decomposition (see Figure~\ref{0surgery2}$(2)$). The surface obtained from the page is an annulus with two boundary components, the one generated by the point $P$ and the other one generated by the point $R$ for example, as we can see in Figure~\ref{0surgery2}. \\
By performing the 0-surgery, we attach another annulus along the boundary of each puncture. One of these annuli is generated by the arc $RQ$, represented in Figure~\ref{0surgery2}$(3)$. 
\begin{figure}[!h]
	\centering
		\includegraphics[width=13cm]{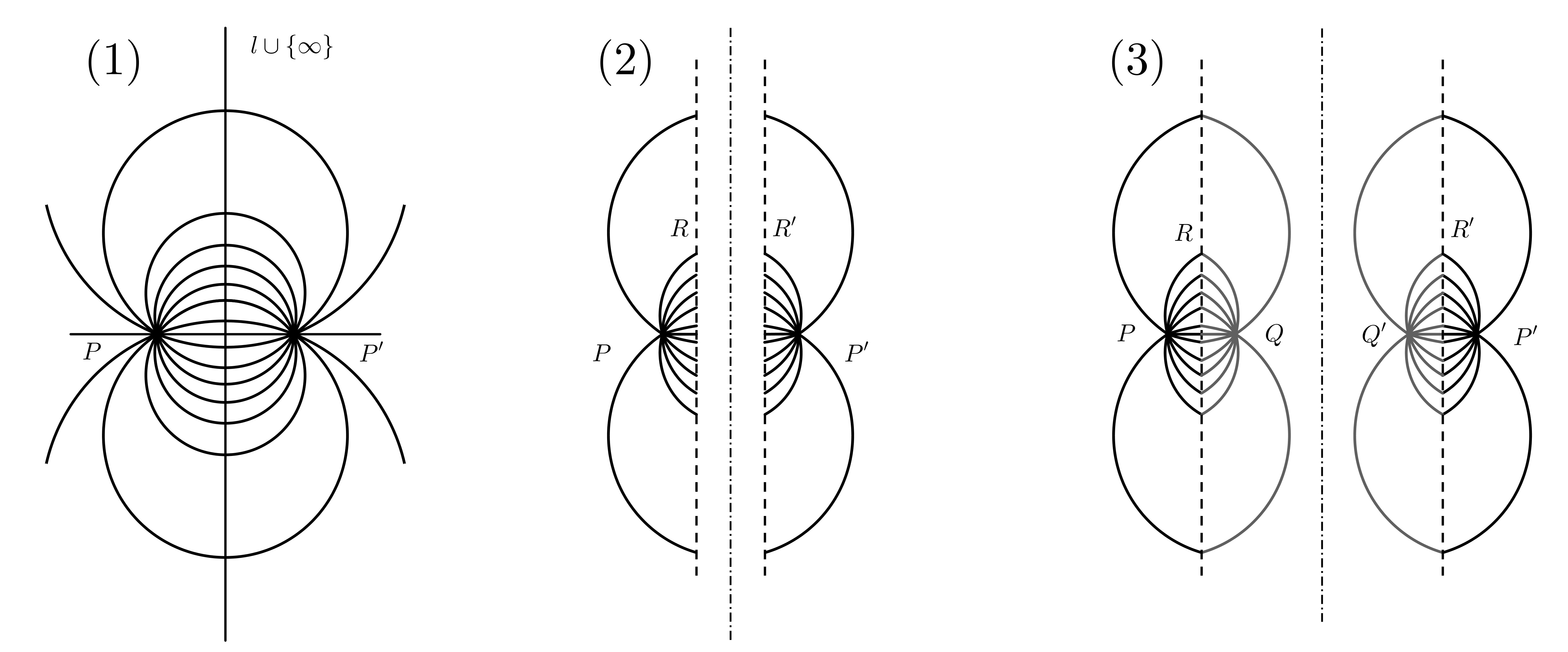}
	\caption{Construction of an open book decomposition for $S^1 \times S^2$.}  \label{0surgery2}
\end{figure}

From the previous reasoning we deduce that $S^1 \times  S^2$ is represented by a rotating 2-sphere. This sphere is thought of as a disk whose boundary points are identified by a reflection along a fixed axis. The identification is represented in Figure~\ref{quotienttorus}$(1)$.  In the same way the page of the decomposition is spanned by the rotation of the segment $PQ$ around $l \cup \{ \infty \}$ (see Figure~\ref{0surgery2}$(3)$).\\
Finally, we report an alternative representation of the decomposition of $S^1 \times S^2$ given by a mixed link diagram in Figure~\ref{quotienttorus}$(2)$.
\begin{figure}[!h]
	\centering
		\includegraphics[width=13cm]{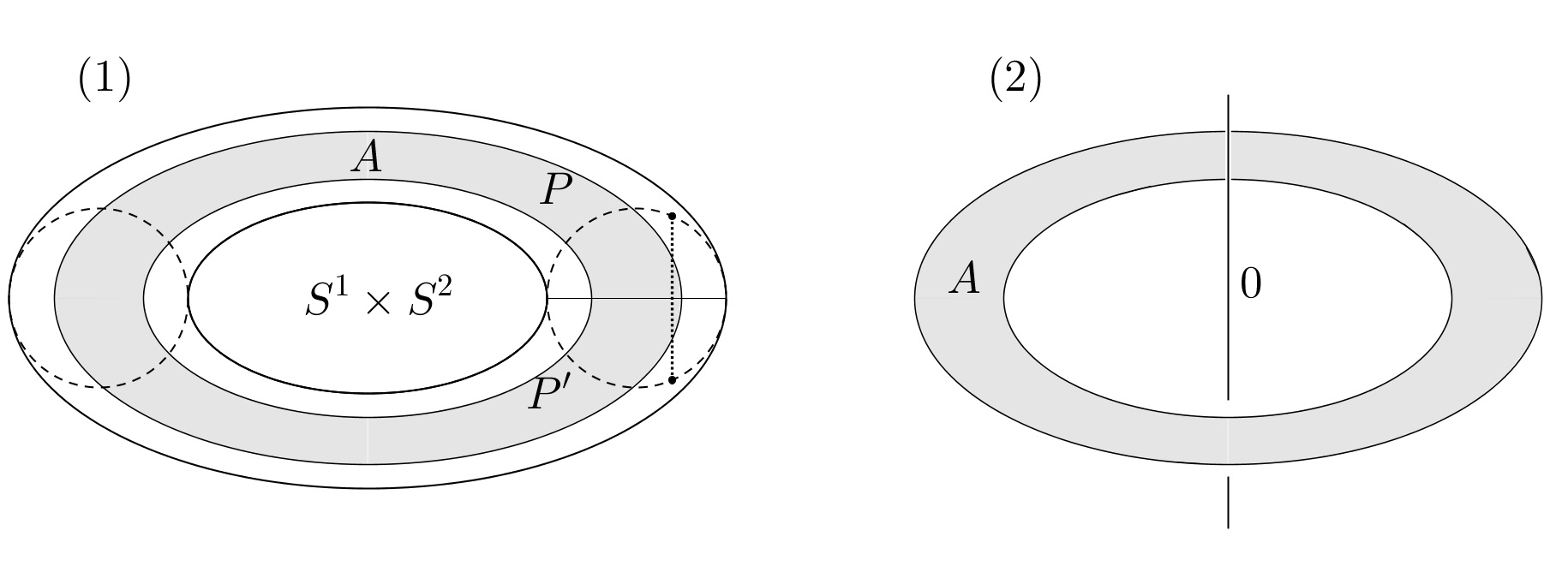}
	\caption{Representations of the open book decomposition for two different model of $S^1 \times S^2$.} \label{quotienttorus}
\end{figure}

In particular part $1)$ of Lemma \ref{obdeffects} tells us even more: it states not only that the page of the open book decomposition of $M^3$ will be punctured in correspondence of each intersection point with the surgery link, but it suggests also how the page of the new manifold $M^3_0$ lies with respect to the surgery link, i.e. it gives back a representation of the binding $L$ (boundary of the annulus $A$ in Figure~\ref{quotienttorus}$(2)$) as the moving part of a mixed link whose fixed part is the 0-framed knot $U'$.

%\begin{teor}[{\upshape ~\cite[Theorem 9.9.1.3]{ozbagci:libro}}]
%Every closed orientable 3-manifold $M$ admits an open book decomposition.
%\end{teor}
%
%\begin{proof}
%We can assume that $M$ is obtained by a surgery along a link $L_M \subset S^3$ with $\pm 1$-framing for each component (this is an easy application of Kirby moves to Theorem \ref{lickorish}). Moreover, using Theorem \ref{braid}, it is possible to represent $L_M$ as the closure of a certain braid $\beta$.\\
%Consider now an unknotted circle $U$ that links to each component of $L_M$ exactly once. We will construct an open book decomposition for $M$ using the open book decomposition $(D^2,\text{Id})$ of $S^3$ with binding $U$. First of all, we remove each linking between the components of $L_M$, thanks to the repeated application of Kirby moves, by introducing unknots framed with $\pm 1$ which sits on the disk bounded by $U$. Now, we keep using Kirby moves until each component of $L_M$ is 0-framed and intersects transversely the disk exactly once.
%Finally, we apply Lemma \ref{obdeffects} to obtain the desired open book for $M$.
%\end{proof}

Now, thanks to the notion of open book decomposition and with the help of Lemma \ref{obdeffects}, we are ready to show the key example of this survey.

\begin{prop} \label{casep1}
Given $p \in \mathbb{Z}$, there exists a fibered link $L \subset L(p,1)$ represented by the mixed link diagram represented in Figure \ref{fiberedp}. Moreover the monodromy of the link $L$ is given by $D^{-p}_\gamma$, where $\gamma$ is the core circle of the annulus $A$ bounded \mbox{by $L$.}

\begin{figure}[!h]\label{fiberedp}
	\centering
		\includegraphics[width=5cm]{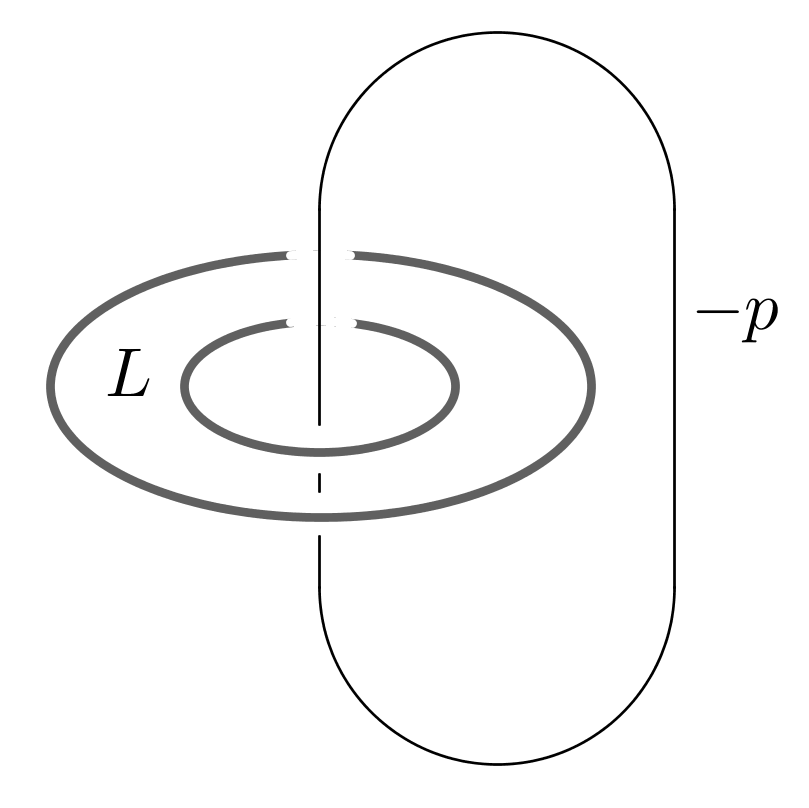}
	\caption{Fibered link in $L(p,1)$.}\label{fiberedp}
\end{figure}

\end{prop}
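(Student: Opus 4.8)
The plan is to realize $L(p,1)$ through the surgery description of Proposition~\ref{integralprop}: since $-p/1=[-p]$, this is a single unknot $U'$ with framing $-p$, which I would position as a curve meeting each page of the standard open book $(D^2,\mathrm{Id})$ of $S^3$ transversely once, exactly the configuration treated in the sketch of Lemma~\ref{obdeffects}. A framing of $-p$ is not among the coefficients handled by Lemma~\ref{obdeffects}, so the first move is to rewrite it with the Kirby calculus of Section~2. By Corollary~\ref{kirbycon} and the ``changing the framing'' move of Figure~\ref{changeframe}, the $(-p)$-framed $U'$ is equivalent to $U'$ with framing $0$ together with $p$ disjoint meridians $c_1,\dots,c_p$ of $U'$, each with framing $+1$; the sign is pinned down by the fact that deleting a $+1$-framed unknot adds $-1$ to the framings, so deleting all $p$ of them returns $U'$ from $0$ to $-p$. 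Each $c_i$ can be taken to be a tiny circle around the point where $U'$ punctures one of the pages, hence an unknot lying on a page of $(D^2,\mathrm{Id})$ and linking nothing but $U'$.

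Next I would feed this rewritten framed link into Lemma~\ref{obdeffects} one surgery at a time. Part~(1), applied to the $0$-framed transverse unknot $U'$, replaces the disk page by the annulus $A=D^2\setminus\{\text{open disk}\}$ and keeps the monodromy equal to the identity; after this puncturing each $c_i$ becomes a curve on $A$ parallel to the newly created boundary circle, hence isotopic on $A$ to the core circle $\gamma$. Part~(2), applied in turn to the $+1$-framed curves $c_1,\dots,c_p$ sitting on the page, post-composes the monodromy with a negative Dehn twist $D_{c_i}^{-1}=D_\gamma^{-1}$ each time, without changing the page. After the $p$ steps the open book is $(A,D_\gamma^{-p})$, its binding is $\partial A$, and the underlying manifold is $L(p,1)$ because the whole framed link used is, by construction, Kirby-equivalent to the $(-p)$-framed unknot. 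By the correspondence between open book decompositions and fibered links recalled above, the binding $L=\partial A$ is then a fibered link in $L(p,1)$ with monodromy $D_\gamma^{-p}$, as asserted; one checks that for $p=0$ and $p=1$ this recovers the open books $(A,\mathrm{Id})$ of $S^1\times S^2$ and $(A,D_\gamma^{-1})$ of $S^3$, and that the same computation, carried out with $(-1)$-framed meridians, yields $D_\gamma^{-p}$ for negative $p$ as well.

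It remains to convert this open book into the mixed link diagram of Figure~\ref{fiberedp}. For that I would undo the rewriting of the first step: deleting $c_1,\dots,c_p$ restores the fixed part to the single $(-p)$-framed unknot $U'$ and, by Corollary~\ref{kirbycon}, introduces the corresponding full twists on the strands of $\partial A$ running through the meridional disks, and tracking the image of $\partial A$ under these operations produces the moving part. The main obstacle is precisely this last bookkeeping: one must follow the two boundary curves (and the core $\gamma$) of $A$ through the puncturing of Lemma~\ref{obdeffects}(1) and through the twist-removals of Corollary~\ref{kirbycon} carefully enough to certify that the resulting picture is, up to isotopy in $S^3$, exactly the diagram drawn in Figure~\ref{fiberedp}; everything else is a direct concatenation of Lemma~\ref{obdeffects} with the Kirby moves already established in Section~2.
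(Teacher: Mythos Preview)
Your approach is essentially the paper's own proof: rewrite the $(-p)$-framed transverse unknot via Kirby moves as a $0$-framed unknot together with $p$ meridional $(\pm 1)$-circles, apply Lemma~\ref{obdeffects}(1) to the $0$-framed piece and Lemma~\ref{obdeffects}(2) to the $(\pm 1)$-circles sitting on the resulting annulus page, and then undo the Kirby moves to recover the mixed link diagram. Your write-up is more explicit than the paper's (in particular about why the meridians $c_i$ land on the annulus page after the puncturing and about the sign bookkeeping), but the strategy and the key inputs are identical.
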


\begin{proof}
Let us consider the standard open book $(D^2,\text{Id})$ in $S^3$ with binding \mbox{$\partial D^2 = U$}. As before, the surgery knot links $U$ exactly once and intersect each page transversely. By adding $\pm 1$-framed components with the help of Kirby moves, we reduce the framing of the surgery knot to be 0. Thanks to Lemma \ref{obdeffects}, we know precisely how to get a mixed link representation from a 0-surgery transverse to the each page. To conclude, we can remove from the representation the $\pm 1$-components, getting back a $-p$-framed knot as in Figure~\ref{fiberedp}.\\
The computation of monodromy needs to distinguish three different cases. If $p=0$, a direct application of Lemma~\ref{obdeffects} shows that the monodromy is the restriction of the initial monodromy, which is exactly the identity in our case. Let us fix $p>0$. Each $+1$-framed circle added to reduce the framing of surgery knot to $0$ contributes to the monodromy with a $D^-_\gamma$. Since we have exactly $p$ components framed by $+1$, the monodromy results to be equal to $D^{-p}_\gamma$, as required. Finally, since the case $p<0$ differs from the previous one only by sign, we have done. 
\end{proof}

\begin{figure}[!h]
	\centering
		\includegraphics[width=13cm]{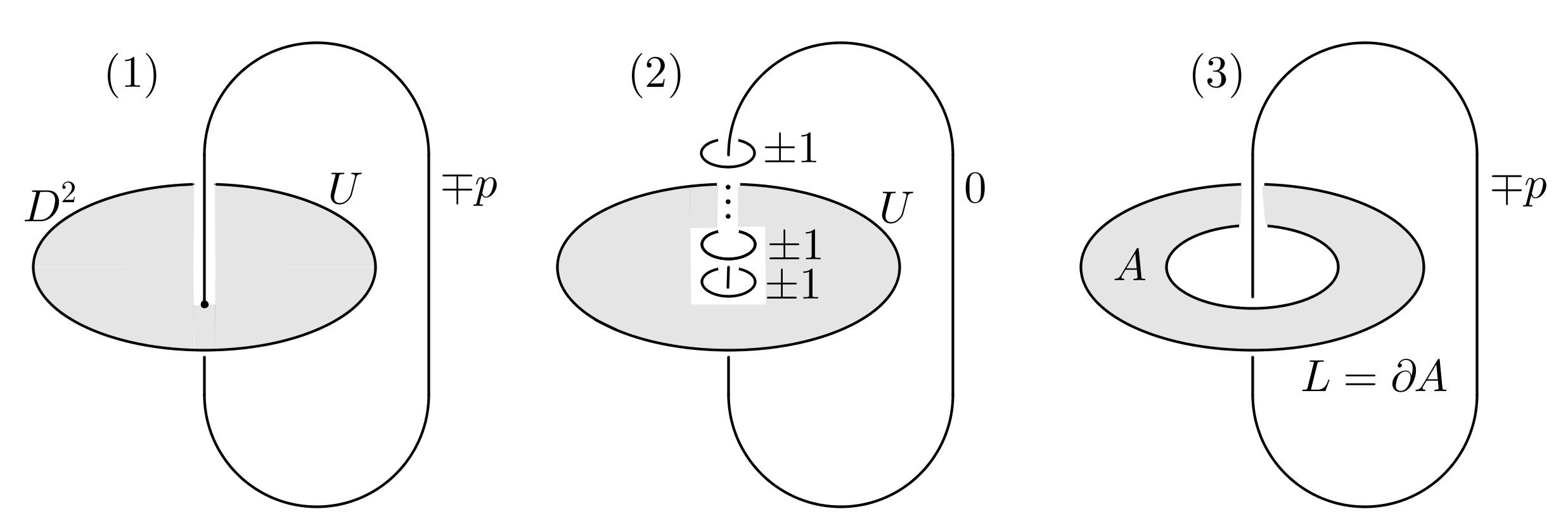}
	\caption{Construction of the fibered link $L$.}
\end{figure}

To extend our previous results to the more general case of the lens space $L(p,q)$ with $q>1$, we have to recall the presentation of lens spaces $L(p,q)$ given by integral surgery on the framed link of Figure~\ref{integralpq}. Then we modify the chosen link in order to present it as the closure of a suitable framed braid.

\begin{prop}\label{fiberedlinkprop}
If $-p/q$ has a continued fraction expansion given by $[a_1,a_2,\ldots,a_n]$ and $L(p,q)$ is presented by the framed link $L$ in Figure \ref{integralpq}, then there exists a fibered link in $L(p,q)$ given by Figure.

\begin{figure}[!h]\label{fiberedlink}
	\centering
		\includegraphics[width=14cm]{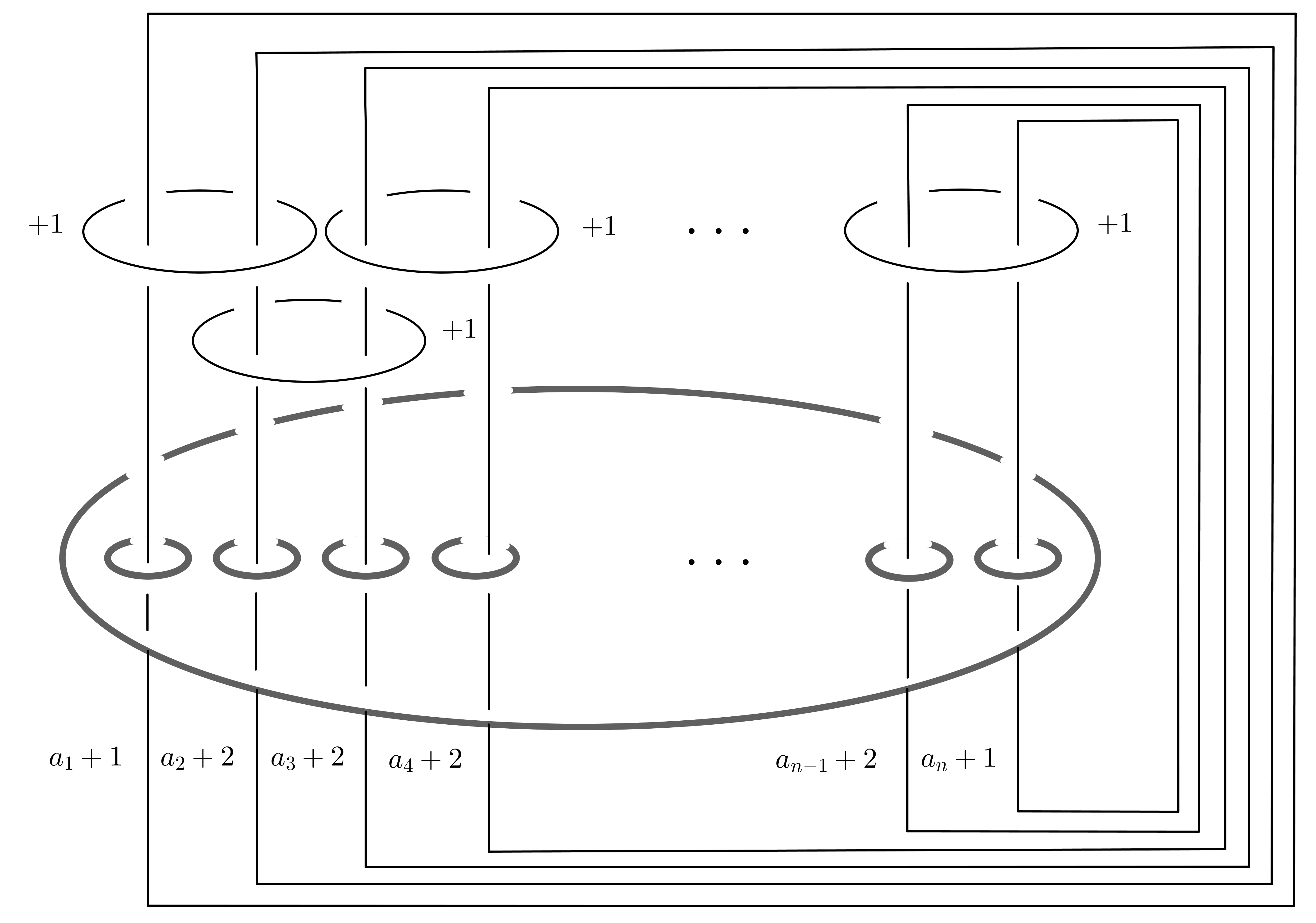}
	\caption{Fibered link in $L(p,q)$.}\label{fiberedlink}
\end{figure}
\end{prop}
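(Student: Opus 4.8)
The strategy is to replay the proof of Proposition~\ref{casep1} in the presence of the whole continued-fraction chain. By Proposition~\ref{integralprop} the lens space $L(p,q)$ is obtained by integral surgery on the linear chain $I_1\cup\cdots\cup I_n$ of Figure~\ref{integralpq} with framings $a_1,\dots,a_n$; the first move is to present this chain as a closed braid. A linear chain of $n$ unknots in which consecutive components are clasped is isotopic to the closure $\widehat{\beta}$ of the pure braid $\beta=\sigma_1^{2\epsilon}\sigma_2^{2\epsilon}\cdots\sigma_{n-1}^{2\epsilon}\in B_n$, where $\epsilon=\pm1$ is the sign of the clasps in Figure~\ref{integralpq}, and I choose the braid axis to be the unknot $U=\partial D^2$, the binding of the standard open book $(D^2,\mathrm{Id})$ of $S^3$. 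Because $\beta$ is pure, $\widehat\beta$ has exactly the $n$ components $I_1,\dots,I_n$, each of them transverse to every disk page and meeting it in a single point; as the page parameter runs over $S^1$, these $n$ points are dragged around one another along $\beta$.

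Next I lower the surgery coefficients into the range covered by Lemma~\ref{obdeffects}, using Corollary~\ref{kirbycon}. For each $i$ I replace the $a_i$-framed component $I_i$ by a $0$-framed $I_i$ together with $|a_i|$ meridional unknots $c_{i,1},\dots,c_{i,|a_i|}$, each with framing $-\mathrm{sgn}(a_i)$; cancelling the $c_{i,k}$ one at a time restores the coefficient $a_i$ on $I_i$, so the new diagram still presents $L(p,q)$. In this diagram the components $I_1,\dots,I_n$ are still transverse to the pages, while each $c_{i,k}$ is a small circle lying in a single page and looping once around the point where $I_i$ pierces that page.

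Now I feed this into Lemma~\ref{obdeffects}. The $n$ zero-surgeries along $I_1,\dots,I_n$, whose piercing points are distinct, produce by part~(1) an open book whose page is the planar surface $\Sigma$ obtained from $D^2$ by removing $n$ disks, with monodromy the diffeomorphism of $\Sigma$ that drags the $n$ holes around one another as prescribed by $\beta$ (a product of Dehn twists realizing the full twists carried by the $\sigma_j^{2\epsilon}$). Each $c_{i,k}$ now sits on $\Sigma$ encircling the $i$-th hole, and one checks that its page framing agrees with its $S^3$-framing $0$; hence by part~(2) the $(-\mathrm{sgn}(a_i))$-surgery along $c_{i,k}$ composes the monodromy with a Dehn twist $D_{c_{i,k}}^{\mathrm{sgn}(a_i)}$, so the $c_{i,\bullet}$ together contribute $D_{\delta_i}^{a_i}$, with $\delta_i$ parallel to the $i$-th hole. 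Thus the chosen mixed-link diagram carries an open book with page $\Sigma$ and monodromy the product of the $D_{\delta_i}^{a_i}$ with the braiding of the holes; for $q=1$ (so $n=1$ and $a_1=-p$) the braiding is trivial and one recovers $(A,D^{-p}_\gamma)$ of Proposition~\ref{casep1}. Finally, cancelling all the $c_{i,k}$ returns the chain of Figure~\ref{integralpq}, so the ambient manifold is $L(p,q)$, while the binding $\partial\Sigma=U\cup(\text{the $n$ hole circles})$, the $k$-th hole circle being isotopic to a meridian of $I_k$, is — drawn as the moving part over the fixed part $I_1\cup\cdots\cup I_n$ — exactly the link of Figure~\ref{fiberedlink}.

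The main obstacle I anticipate is the geometric bookkeeping compressed into the third paragraph: checking that the framing conventions of Lemma~\ref{obdeffects} (the transverse-to-page versus Seifert framing for the $I_i$, the page versus Seifert framing for the $c_{i,k}$) remain matched after the successive surgeries, that each $c_{i,k}$ really lands on a page as an essential loop around the correct hole and survives the corresponding zero-surgery, and — above all — that once the auxiliary Kirby moves are undone the binding one obtains coincides on the nose with the curve drawn in Figure~\ref{fiberedlink}. Writing the braiding part of the monodromy explicitly as a word in Dehn twists along $\Sigma$ is a further, purely combinatorial, step that is not needed for the existence statement.
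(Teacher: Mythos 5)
Your overall strategy is the paper's: braid the chain of Figure~\ref{integralpq} around the binding $U$ of the standard open book $(D^2,\mathrm{Id})$ so that every component is transverse to every page, use Kirby moves to reduce each surgery to the two situations of Lemma~\ref{obdeffects}, and then undo the auxiliary moves to recover the chain as the fixed part with $\partial\Sigma$ as the moving part. There is, however, one genuine divergence, and it leaves a gap. The paper inserts an extra preliminary step: before touching the framings it \emph{removes each clasp between consecutive components} $I_i, I_{i+1}$ by introducing a $+1$-framed circle around the two strands (Corollary~\ref{kirbycon}, Figure~\ref{nolink}); this shifts the $a_i$ but makes the $I_i$ pairwise unlinked, so that part~(1) of Lemma~\ref{obdeffects} applies literally to each $0$-framed transverse component --- the page is punctured and the monodromy is simply the restriction --- while \emph{all} of the linking information is converted into $\pm1$-framed circles lying on pages, hence into Dehn twists via part~(2). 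You instead keep the chain clasped and assert that the $n$ simultaneous $0$-surgeries on the mutually linked transverse circles yield a monodromy containing a ``braiding of the holes'' prescribed by the pure braid $\beta$. That statement is true, but it is strictly stronger than part~(1) of Lemma~\ref{obdeffects} as stated (which gives $h'=h|_{\Sigma'}$ and says nothing about several linked transverse circles), so as written your third paragraph rests on an unproved extension of the lemma rather than on the tools the paper provides. The fix is exactly the paper's move: trade the clasps for page-parallel $+1$-circles first, and the braiding you are trying to describe reappears, correctly normalized, as Dehn twists about curves enclosing pairs of adjacent holes.

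The rest of your bookkeeping --- replacing each $a_i$ by $0$ plus $|a_i|$ meridional circles of framing $-\mathrm{sgn}(a_i)$, checking that these meridians lie on single pages with page framing equal to the Seifert framing so that part~(2) applies, cancelling them to restore the chain, and identifying the binding with $U$ together with one curve per hole --- matches the paper's proof and its worked example for $[a_1,a_2,a_3]$. Note also that the framings you obtain before applying the lemma differ from the paper's (the unlinking circles shift each $a_i$ by the number of its neighbours, which is why the example speaks of reducing $a_i+1$ to $0$); this is harmless for the existence statement but matters if you want the monodromy on the nose.
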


\begin{proof}
The proof is a clear extension of the case $q=1$. Let us consider the standard open book decomposition $(D^2,\text{Id})$ of $S^3$ with binding $U$. We require that $U$ links each component of the framed link exactly once and every component has to intersect the pages transversely.\\
Firstly, we remove each linking of $L$ by introducing $+1$ framed circles. Then, we keep adding $\pm 1$-framed circles to reduce to $0$ each surgery framing. By applying Lemma \ref{obdeffects}, we get a mixed link from each $0$-framed transverse surgery. Finally, we remove every $\pm 1$-component, to get the desired presentation.
\end{proof}

\begin{es}
We will give an example for the case $p/q=[a_1,a_2,a_3]$. Let us suppose to have a lens space $L(p,q)$ represented by a framed link with three components. The first step of the algorithm is to represent the framed link as a closed braid (see Figure~\ref{case3}$(1)$). Then, we consider the standard open book decomposition $(D^2,Id)$ of $S^3$ with $D^2$ in the position required by Proposition~\ref{fiberedlinkprop} with respect to the framed link. In order to remove each linking between the components we have to introduce $+1$-framed circles. To change $a_1 +1, a_2 +1$ and $a_3 +1$ into $0$, we keep introducing $\pm1$-framed circles (see Figure~\ref{case3}$(2)$). At this point, we apply the first part of Lemma~\ref{obdeffects} to the page $D^2$ and we obtain a page punctured three times. Finally, we remove each circle previously added in order to get back the framed link shown in Figure~\ref{case3}(3).

\begin{figure}[!h]
	\centering
		\includegraphics[width=14cm]{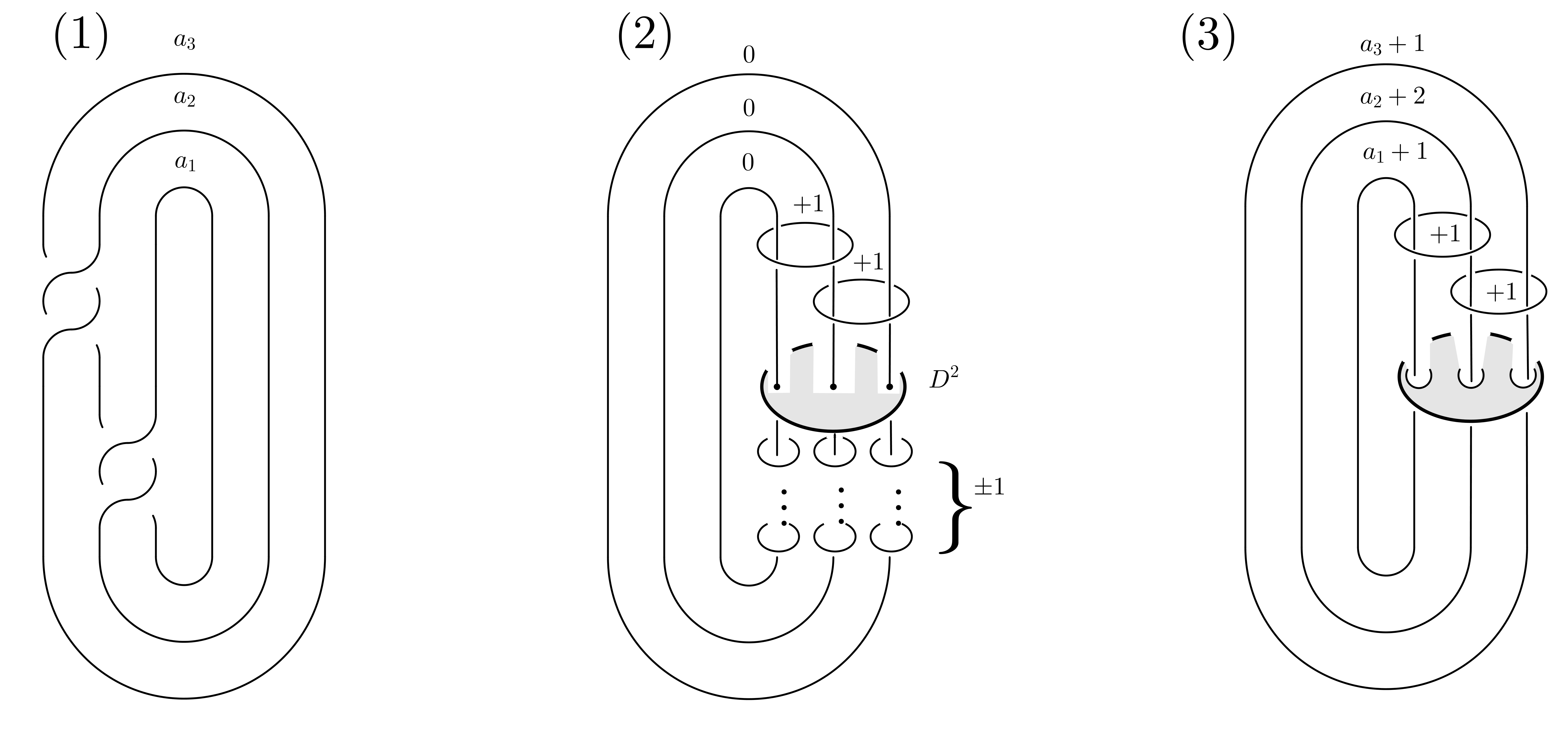}
	\caption{Construction of a fibered link for $p/q=[a_1,a_2,a_3]$}\label{case3}
\end{figure}
\end{es}

\begin{oss}
In the particular case described in Proposition \ref{casep1} we can apply a plumbing operation to $L$ to get the a fibered knot in $L(p,1)$. We represent it by the punctured disk diagram in Figure \ref{fiberedtoric}.

\begin{figure}[!h]\label{fiberedtoric}
	\centering
		\includegraphics[width=4.5cm]{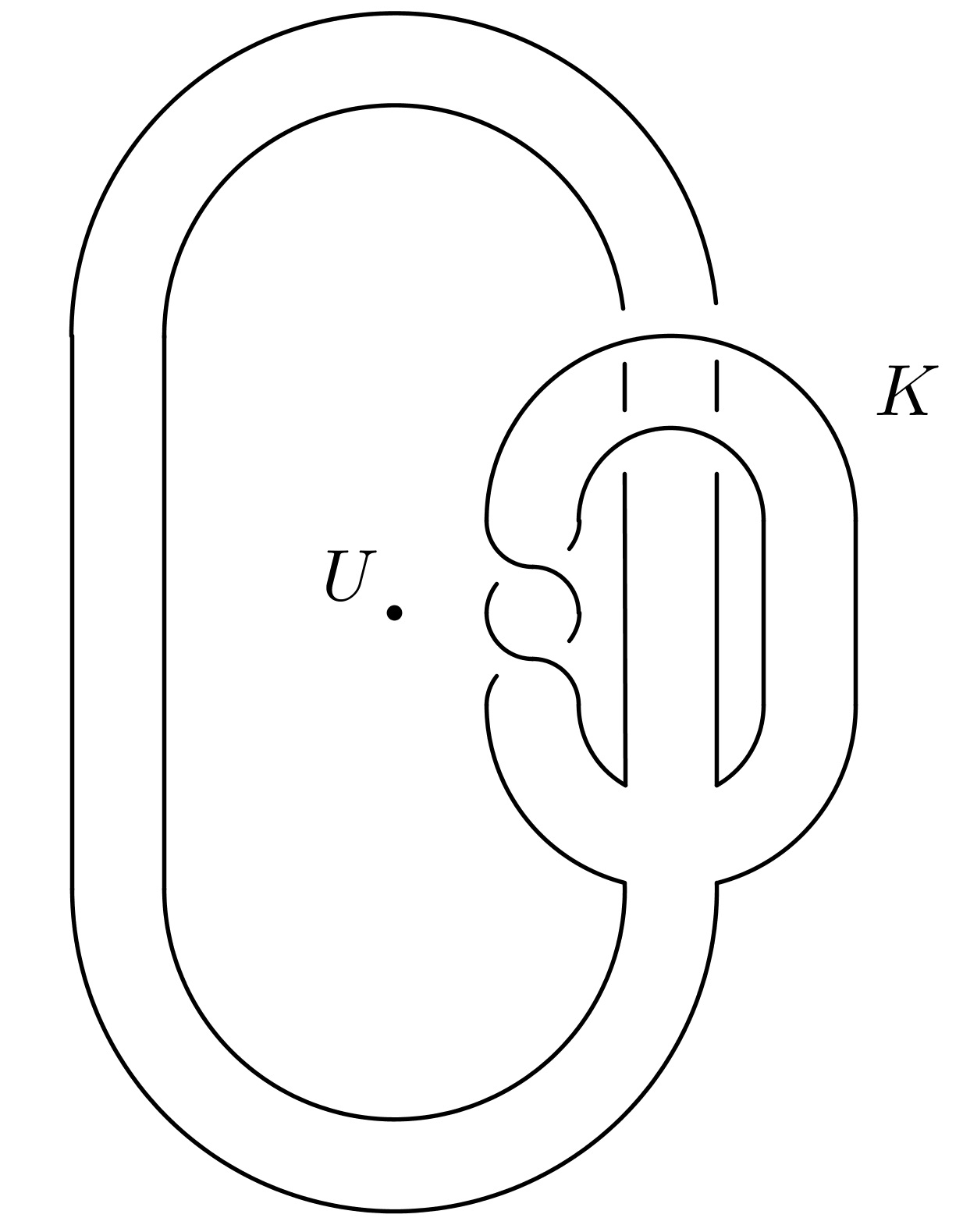}
	\caption{Punctured disk diagram for the fibered knot $K$.}\label{fiberedtoric}
\end{figure}

An analogous procedure involves the more general case of $L(p,q)$. If we consider the example reported in Proposition \ref{fiberedlinkprop}, we need to repeat the plumbing operation several times, once for each integer $a_i$ that appears in the continued fraction decomposition.
\end{oss}

\section{Lift in the 3-sphere of fibered links in $L(p, 1)$}

In the first part of this section we are going to exhibit an example of the lift of the link exposed in Proposition~\ref{casep1} for both $p<1$ and $p>1$. The lift will be a negative Hopf link $H^-$ for $p>1$ and a positive Hopf link $H^+$ for $p<1$. Since in both cases the lift reveals to be a fibered link, we can ask if the links of Proposition~\ref{casep1} may be obtained by quotienting the Hopf pairs in $S^3$ by the $\mathbb{Z}_p$-action.\\
We dedicate the second part of the section to the explicit computation which gives a positive answer to the previous question. Since we are also interested in translating topological results in terms of contact structures, we will show that the standard contact structure on $S^3$ is $\mathbb{Z}_p$-invariant and determines a standard contact structure on all the lens spaces $L(p,q)$. Moreover, this new contact structures reveal to support the open book decomposition associated to the quotient of $H^+$ when $p<1$.\\
A priori, it seems we have reached two different open book decompositions for $L(p,-1)$: the one associated to the link reported in Proposition~\ref{casep1} and the one obtained by quotienting $H^+$ under the $\mathbb{Z}_p$-action. We conclude the section by showing that these decompositions are equivalent.\\
%calcolo del sollevamento
In order to determine a lift for the link in Proposition~\ref{casep1}, we denote that link by $L$. To get the correct lift it is important to orient $L$ thinking of it as the boundary of a generic page of the fibration.
\\
Let us consider the case $p>1$. First of all we want to underline that ~\cite{manfredi:tesi} represents the lens space $L(p,q)$ as obtained by a rational surgery along the unknot with coefficient $p/q$, and not $-p/q$, as we have supposed so far. Thus, in order to apply correctly Proposition~\ref{lift}, we have to modify the framing $-p$. By following Section 16.4 of ~\cite{prasolov:libro}, we immediately understand that we can change the coefficient $-p$ into $p/p-1$ but we must add two extra twists to the link $L$ (see Figure~\ref{rolfsen}).

\begin{figure}[!h]
	\centering
		\includegraphics[width=10cm]{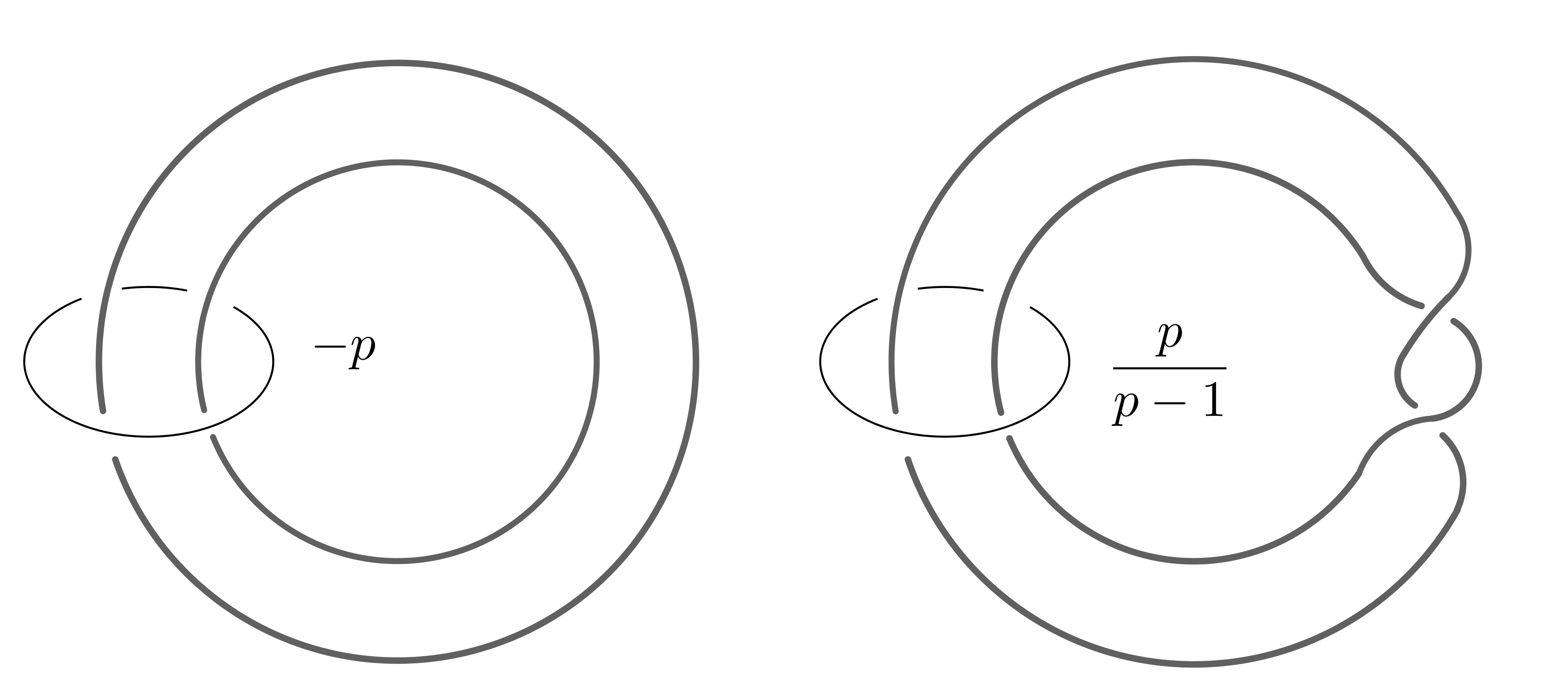}
	\caption{Changing framing of the fixed part: from $-p$ to $p/p-1$.}\label{rolfsen}
\end{figure}

Now we are ready to get the lift for $p>1$.  We start constructing a band diagram of $L$ by following the procedure described in Section 1.3 (see Figure~\ref{linklift}(1)). The band diagram results to be the braid $\Delta_2^{-2}=\sigma_1^{-2}$ (see Figure~\ref{linklift2}(1)). Since we have only two components, the Garnside braid coincides with the generator $\sigma_1$ of the braid group. If we now apply Proposition~\ref{lift}, we see that the lift of $L$ in $S^3$ is the closure of the braid
\[
\Lambda^-:=(\Delta_2^{-2})^p\Delta_2^{2(p-1)}=(\sigma_1^{-2})^p\sigma_1^{2(p-1)}=\sigma_1^{-2},
\]
which becomes the positive Hopf link $H^+$ with the orientation previously fixed (see Figure~\ref{linklift2}(2)).

The case $p<1$ results to be easier than the previous one, since the framing is already positive. We construct firstly the punctured disk diagram and then the associated band diagram (see Figure~\ref{linklift}(1)). If we now apply Proposition~\ref{lift}, the lift of $L$ in $S^3$ coincides with the closure of the braid
\[
\Lambda^+:=\Delta_2^{2q}=\sigma_1^2,
\]
which is precisely the negative Hopf band $H^-$, according to the given orientation (see Figure~\ref{linklift}(2)).
\begin{figure}[!h]
	\centering
		\includegraphics[width=13cm]{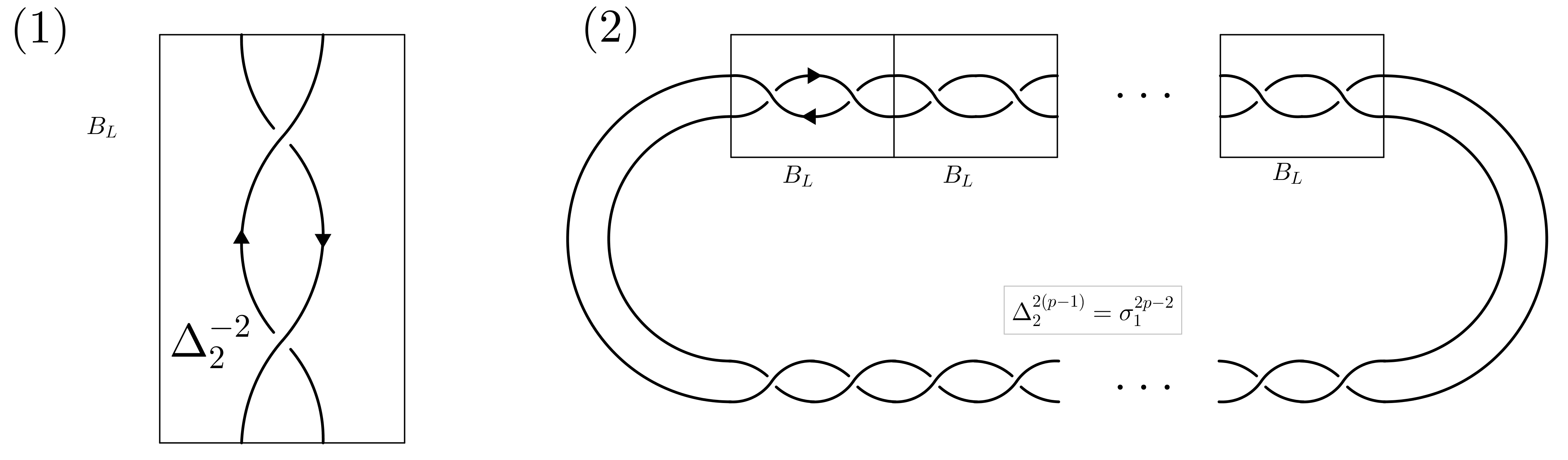}
	\caption{(1) Band diagram for $L$, (2) Lift for $p>1$.}\label{linklift2}
\end{figure}

\begin{figure}[!h]
	\centering
		\includegraphics[width=13cm]{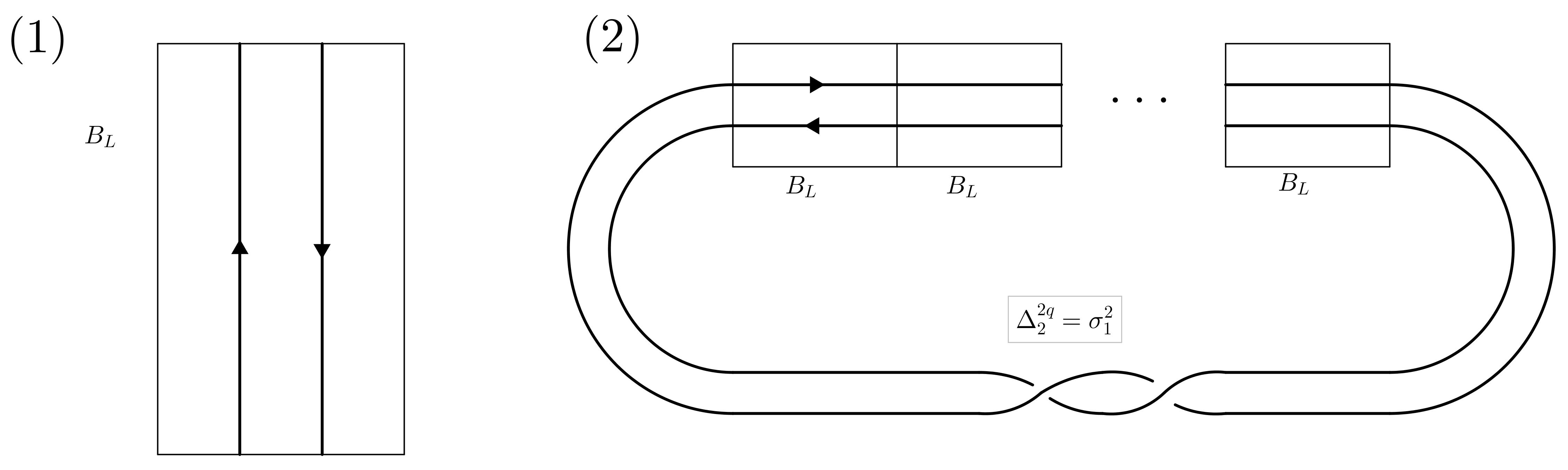}
	\caption{(1) Band diagram for $L$, (2) Lift for $p<1$.}\label{linklift}
\end{figure}

For both $p>1$ and $p<1$, the lift reveals to be a Hopf link, which is fibered in $S^3$. We are going to show that in $L(p,-1)$ the link of Proposition~\ref{casep1} can be obtained as the quotient of $H^+$ under the action of $\mathbb{Z}_p$. First of all, we remember that for $H^+:=\{ (z_1,z_2) \in S^3 | z_1z_2=0\}$ the projection map which realizes the fibration is given by
\[
\pi^+: S^3 \setminus H^+ \rightarrow S^1, \hspace{15pt} \pi_+(\rho_1,\theta_1,\rho_2,\theta_2):=\theta_1 + \theta_2.
\]

For the space $L(p,-1)$ the action of the generator $\zeta \in \mathbb{Z}_p$ can be written as follows
\[
\zeta \cdot (\rho_1e^{i\theta_1},\rho_2e^{i\theta_2})=(\rho_1e^{i(\theta_1+\frac{2\pi}{p})},\rho_2e^{i(\theta_2-\frac{2\pi}{p})})
\]
for every point $(\rho_1e^{i\theta_1},\rho_2e^{i\theta_2}) \in S^3$. Hence, the components of $H^+$ are stable under the $\mathbb{Z}_p$ action. In fact, choosing for instance the component $\{ z_1 = 0\}$, it results
\[
\zeta \cdot (0,e^{i\theta_2})=(0,e^{i(\theta_2-\frac{2\pi}{p})}),
\]
so $\zeta \cdot e^{i\theta_1}$ is still a point of $\{z_1=0\}$. The same is true for the other component $\{z_2=0\}$, but we have to change the sign in the action of $\zeta$. In the same way, the map $\pi^+$ is stable under the action of $\zeta$. Indeed, we have
\[
\pi^+(\zeta \cdot (\rho_1,\theta_1,\rho_2,\theta_2))=\pi^+(\rho_1,\theta_1+\frac{2\pi}{p},\rho_2,\theta_2-\frac{2\pi}{p})=\theta_1+\frac{2\pi}{p}+\theta_2-\frac{2\pi}{p}
\]
and thanks to cancellation the result coincides with $\pi^+(\rho_1,\theta_1,\rho_2,\theta_2)$. Hence, we have a well defined map
\[
\pi^+_{p,-1}:L(p,-1) \setminus H^+_{p,-1} \rightarrow S^1, \hspace{15pt} \pi^+_{p,-1}[\rho_1,\theta_1,\rho_2,\theta_2]:=\theta_1+\theta_2
\]
where $[\rho_1,\theta_1,\rho_2,\theta_2]$ stands for the equivalence class of the point $(\rho_1,\theta_1,\rho_2,\theta_2)$ in $L(p,-1)$ and, with the introduction of an abuse of notation, \mbox{$H^+_{p,-1}:=H^+/\mathbb{Z}_p$}. We now consider the annulus embedded in $S^3$ and parametrized by the equation
\[
i_+:[0,1] \times S^1 \rightarrow S^3, \hspace{15pt} i_+(t,\theta):=(\sqrt{1-t^2},-\theta,t,\theta)
\]
and we set $A^+:=\text{Im}i_+$. This annulus is $\mathbb{Z}_p$ stable, since it holds
\[
\zeta \cdot i_+(t,\theta)=\zeta \cdot (\sqrt{1-t^2},-\theta,t,\theta)=(\sqrt{1-t^2},-\theta+\frac{2\pi}{p},t,\theta-\frac{2\pi}{p})
\]
which is exactly a point of the form $i_+(t,\theta-\frac{2\pi}{p}) \in A^+$. Then, by indicating with $A^+_{p,-1}:=A^+/\mathbb{Z}_p$, we have shown that the pair $(H^+_{p,-1},\pi^+_{p,-1})$ is an open book decomposition for $L(p,-1)$ with page $A^+_{p,-1}$.\\
% Studio strutture contatto
The next is to prove the compatibility of $(H^+_{p,-1},\pi^+_{p,-1})$ with a suitable contact structure on $L(p,-1)$. To determine the desired contact structure we can observe that the standard contact structure $\xi_{st}=\ker \alpha$ induces a contact structure on every lens space $L(p,q)$ with $p \neq 0$. Indeed, the action of an element $\zeta^n \in \mathbb{Z}_p \subset U(1)$ fixes the form $\alpha$, as shows the following computation 
\[
\zeta^n \cdot \alpha = \rho_1^2 d(\theta_1 +\frac{2\pi n}{p}) + \rho_2^2 d(\theta  + \frac{2\pi nq}{p})=\rho_1^2 d\theta_1  + \rho_2^2 d\theta_2=\alpha, \hspace{10pt} n \in \mathbb{Z}
\]

and this proves that we have a well defined contact 1-form on $L(p,q)$ induced by $\alpha$. 

\begin{deft}
The contact structure induced on $L(p,q)$ by the standard contact 1-form of $S^3$ under the action of $\mathbb{Z}_p$ is called \textit{standard contact structure} on the lens space $L(p,q)$. We indicate the 1-form associated to this structure with $\alpha_{p,q}$.
\end{deft}

In order to show the compatibility of $\alpha_{p,-1}$ with $(H^+_{p,-1},\pi_{p,-1})$, it suffices to prove the compatibility of the standard contact structure on $S^3$ with $(H^+,\pi^+)$.
For a fixed value $\omega = \theta_1+\theta_2$, the page $(\pi^+)^{-1}(\omega)$ is parametrized by
\[
i_+:[0,1] \times S^1 \rightarrow S^3, \hspace{15pt} i_+(t,\theta):=(\sqrt{1-t^2},\omega-\theta,t,\theta).
\]

By considering
\[
\alpha^+:=i_+^*(\rho_1,\theta_1,\rho_2,\theta_2)=(1-t^2)d(\omega-\theta)+t^2d\theta=(2t^2-1)d\theta
\]
and computing its differential $d\alpha^+=d[(2t^2-1)d\theta]=4tdt\wedge d\theta$, we obtain a positive volume form on the annulus. Now, if we consider the component of $H^+$ given by $\{ z_1 = 0\}$, this has $\partial/\partial \theta_2$ as tangent vector and $\alpha(\partial/\partial \theta_2)>0$. In the same way, the component given by $\{ z_2 = 0\}$ has tangent vector $\partial / \partial \theta_1$ and $\alpha(\partial / \partial \theta_1) >0$, so we have proved that $\xi_{st}$ is supported by $(H^+,\pi^+)$.
\\
Since we have obtained two distinct open book decomposition for $L(p,-1)$, the one determined in Proposition~\ref{casep1} and the one induced by the open book $(H^+,\pi^+)$, we wish to show now the these two open book decompositions are equivalent.
To do this, we start considering the abstract open book associated to $(H^+_{p,-1},\pi^+_{p,-1})$. The page coincides with the annulus $A=S^1 \times [0,1]$. If $h$ is the monodromy, the decomposition results to be $(A,h)$. Since $h$ in an element of the mapping class group $\text{MCG} (A)$, which is isomorphic to group $\mathbb{Z}$ generated by the positive Dehn twist along the core circle $\gamma=S^1 \times \{1/2\}$, we must have $h:=D^k_\gamma$, with $k \in \mathbb{Z}$. 
Thanks to Lemma~\ref{obdeffects}, we know that $k$ has to be equal to $p$ in order to get back the space $L(p,-1)$, so the abstract results to be $(A,D^p_\gamma)$. But this decomposition is exactly the one associated to the open book obtained in Proposition~\ref{casep1}. Thus the two decompositions obtained must be diffeomorphic.

%%**********************************************************
%%Materiale Finale
%%**********************************************************

%**************************
% Bibliografia
%**************************

\addcontentsline{toc}{chapter}{\bibname} 	%Inserimento bibliografia nell'indice

\vspace{40pt}
Enrico Manfredi\\
Department of Mathematics,\\
University of Bologna,\\
Piazza di Porta San Donato 5,\\
40126 Bologna,\\
Italy\\
enrico.manfredi3@unibo.it\\
\\
Alessio Savini\\
Department of Mathematics,\\
University of Bologna,\\
Piazza di Porta San Donato 5,\\
40126 Bologna,\\
Italy\\
alessio.savini5@unibo.it

\end{document}